\documentclass[10pt,a4paper]{article}

\usepackage[margin=1in]{geometry}
\usepackage[T1]{fontenc}
\usepackage[utf8]{inputenc}
\usepackage{lmodern}
\usepackage{amsmath,amssymb,amsfonts,amsthm,mathtools}
\usepackage{enumitem}
\usepackage[hidelinks]{hyperref}

\newtheorem{thm}{Theorem}[section]
\newtheorem{lem}[thm]{Lemma}
\newtheorem{prop}[thm]{Proposition}
\newtheorem{cor}[thm]{Corollary}
\newtheorem{defi}[thm]{Definition}
\newtheorem{fac}[thm]{Fact}
\newtheorem{remark}[thm]{Remark}
\newtheorem{exam}[thm]{Example}

\newcommand{\Ker}{\operatorname{Ker}}
\newcommand{\Gal}{\operatorname{Gal}}
\newcommand{\Tr}{\operatorname{Tr}}
\newcommand{\Norm}{\operatorname{N}}
\newcommand{\Res}{\operatorname{Res}}
\newcommand{\one}{\mathbf{1}}
\newcommand{\rank}{\operatorname{rank}}

\numberwithin{equation}{section}

\renewcommand{\thefootnote}{\fnsymbol{footnote}}

\title{Cyclotomic Numbers of Order $q-1$ over $\mathbb{F}_{q^r}$}
\author{Hayaki Kudo\footnotemark[1], Yuto Nogata\footnotemark[2]\\
\normalsize{3, Bunkyo-cho, Hirosaki-shi, Aomori, 036-8561, JAPAN}}
\date{\today}

\begin{document}

\maketitle
\footnotetext[1]{Graduate School of Science and Technology, Hirosaki University. Email: h25ms109@hirosaki-u.ac.jp}
\footnotetext[2]{Doctoral Course, Safety Science and Technology, Safety System Engineering, Graduate School of Science and Technology, Hirosaki University. Email: h26ds201@hirosaki-u.ac.jp}
\setcounter{footnote}{0}
\renewcommand{\thefootnote}{\arabic{footnote}}

\section*{Abstract}
Let $q=p^n$, $r\in \mathbb{Z}_{\ge 2}$, $e=q-1$, and $k=\frac{q^r-1}{e}$. In this paper, we study the cyclotomic numbers $(a,b)_{q-1}$ over $\mathbb{F}_{q^r}$. We prove that $(a,b)_{q-1}\le \left\lceil \frac{k}{2}\right\rceil$ for all $0\le a,b\le q-2$ except when $q=2$ and $r\ge 3$. We also give sharper bounds for prime values of $r$, especially for $r=2$ and $r=3$.

\noindent\textbf{MSC2020:} 11T22, 11T24, 12E20

\noindent\textbf{Keywords:} Cyclotomy, cyclotomic number, finite field

\section{Introduction and main results}

We begin by recalling the standard definitions of cyclotomic cosets, cyclotomic numbers, and the matrices $C^{(a,b)}$.

\begin{defi}[\cite{Storer1967}]
Assume that $k\mid |\mathbb{F}|-1$, and let $e:=\frac{|\mathbb{F}|-1}{k}$. Fix $\omega$ with $\langle\omega\rangle=\mathbb{F}^{\times}$. For $0\le a\le e-1$, define the cyclotomic coset $C_a:=\omega^a\langle \omega^e\rangle$. For $0\le a,b\le e-1$, define the cyclotomic number by $(a,b)_e:=\#\{x\in C_a\mid x+1\in C_b\}$.
\end{defi}

\begin{defi}
For $0\le a,b\le e-1$, define the $k\times k$ matrix $C^{(a,b)}=(c_{ij}^{(a,b)})_{0\le i,j\le k-1}$ over $\mathbb{F}$ by
\[
  c_{ij}^{(a,b)}=
  \begin{cases}
    1+\omega^{ak}-\omega^{bk}, & \textup{if } i=j,\\
    \binom{k}{j-i}, & \textup{if } i<j,\\
    \omega^{ak}\binom{k}{i-j}, & \textup{if } i>j.
  \end{cases}
\]
\end{defi}
According to \cite[Lemma 3.1]{BetsumiyaHirasakaKomatsuMunemasa2013}, we have $(a,b)_e=k-\rank_{\mathbb{F}}(C^{(a,b)})$. The cyclotomic numbers of small order have been studied extensively in the classical literature \cite{BaumertFredricksen1967,EvansHill1979,Lehmer1954,LehmerVandiver1957,Whiteman1957,Wilson1972}. More precisely, the cyclotomic numbers $(a,b)_e$ are explicitly determined for $e\le 12$ and for $e=14,15,16,18,20,24$; see \cite[p.~152]{BerndtEvansWilliams1998}.  \cite{Katre1989} proved that, for fixed $e$ and $|\mathbb{F}|\to\infty$, we have $(a,b)_e\sim \frac{|\mathbb{F}|}{e^2}$ for all $a,b\in \mathbb{Z}$. On the other hand, \cite{BetsumiyaHirasakaKomatsuMunemasa2013,DoDucLeungSchmidt2020} established upper bounds for cyclotomic numbers for large characteristic. In particular, \cite[Theorem 1.1(i)--(v)]{BetsumiyaHirasakaKomatsuMunemasa2013} proves the following.

\begin{fac}[\cite{BetsumiyaHirasakaKomatsuMunemasa2013}, Theorem 1.1(i)--(v)]
Let $q$ be a power of an odd prime $p^{n}$. Then the following hold.
\begin{enumerate}
\item[\textup{(i)}] If $p>\frac{3k}{2}-1$, then $(a,b)_{e}\le \lceil k/2\rceil$ for all $0\le a,b\le e-1$.
\item[\textup{(ii)}] If $k$ is odd and $p>\frac{3k}{2}$, then $(a,a)_{e}\le \lceil k/2\rceil-1$ for all $0\le a\le e-1$.
\item[\textup{(iii)}] If $p>\frac{3k}{2}$, then $(0,0)_{e}\le \lceil k/2\rceil-1$.
\item[\textup{(iv)}] If $6\mid k$ and $p$ is sufficiently large compared with $k$, then $(0,0)_{e}=2$.
\item[\textup{(v)}] If $6\nmid k$ and $p$ is sufficiently large compared with $k$, then $(0,0)_{e}=0$.
\end{enumerate}
\end{fac}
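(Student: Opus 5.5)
The natural route is through the rank formula of \cite[Lemma 3.1]{BetsumiyaHirasakaKomatsuMunemasa2013}, $(a,b)_e=k-\rank_{\mathbb{F}}(C^{(a,b)})$. Equivalently, since $x\in C_a$ iff $x^k=\omega^{ak}$, the number $(a,b)_e$ is the number of common roots of $f=x^k-\alpha$ and $g=(x+1)^k-\beta$, where $\alpha=\omega^{ak}$ and $\beta=\omega^{bk}$. Both polynomials are separable because $p\nmid k$ (this follows from $p>\tfrac{3k}{2}-1$ together with $k\mid q-1$), so $(a,b)_e=\deg\gcd(f,g)$. For (i) the goal is therefore $\rank C^{(a,b)}\ge \lfloor k/2\rfloor$, and I would get it by exhibiting a nonsingular $\lfloor k/2\rfloor\times\lfloor k/2\rfloor$ submatrix.

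Choose the rows $i$ and columns $j$ so that the selected entries all lie strictly above the diagonal. Concretely, take rows $0,\dots,m-1$ and columns $k-m,\dots,k-1$ with $m=\lfloor k/2\rfloor$. This minor involves only the entries $\binom{k}{j-i}$ and none of $\alpha,\beta$, so the bound becomes uniform in $a,b$. The minor is the binomial Hankel/Toeplitz determinant $\det\bigl(\binom{k}{k-m+s-t}\bigr)_{0\le s,t<m}$. By the standard Gessel--Viennot or MacMahon-type product formula it equals a ratio of products of factorials, and every prime dividing the numerator is at most about $\tfrac{3k}{2}-1$. Hence the minor is nonzero modulo $p$ once $p>\tfrac{3k}{2}-1$, which gives $(a,b)_e\le k-m=\lceil k/2\rceil$. \textbf{Main obstacle:} evaluating this determinant exactly and checking the largest prime factor. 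I would first try a Dodgson condensation induction to prove the product formula, and fall back on an explicit LU factorization if that fails.

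For (ii) and (iii), I would gain one more unit of rank when $a=b$, i.e.\ $\alpha=\beta$. In that case the diagonal entries all equal $1$, so I can enlarge the minor by one row and column that reaches the diagonal band. The determinant of the enlarged minor should again be a product formula, now nonzero for $p>\tfrac{3k}{2}$. When $k$ is odd this handles every $a$; in case (iii), $a=b=0$ gives $\alpha=\beta=1$ and the same enlargement works for any $k$.

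For (iv) and (v), I would lift to characteristic $0$. Let $\alpha=\beta=1$ and work over $\mathbb{C}$. A common root of $x^k-1$ and $(x+1)^k-1$ satisfies $|x|=|x+1|=1$, which forces $x=e^{\pm 2\pi i/3}$; then $x+1=e^{\pm\pi i/3}$. Both $x$ and $x+1$ are $k$-th roots of unity exactly when $6\mid k$. So over $\mathbb{C}$ the gcd has degree $2$ if $6\mid k$ and $0$ otherwise. I would then take the resultant of $f/\gcd$ and $g/\gcd$ over $\mathbb{Z}[\zeta_k]$. It is a fixed nonzero algebraic integer, so for $p$ large relative to $k$ it stays nonzero modulo every prime above $p$. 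Hence the degree of the gcd is preserved under reduction to $\mathbb{F}$, which gives exactly $2$ or $0$.
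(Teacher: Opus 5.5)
The paper does not prove this statement; it only quotes it from \cite{BetsumiyaHirasakaKomatsuMunemasa2013}, so your plan has to be judged on its own merits. For (i), (ii), (iv) and (v) it is sound.

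The determinant you flag as the main obstacle is classical: it is MacMahon's box formula, obtainable for instance via Lindstr\"om--Gessel--Viennot, namely $\det_{0\le s,t<c}\binom{a+b}{a-s+t}=\prod_{i=0}^{c-1}\frac{i!\,(a+b+i)!}{(a+i)!\,(b+i)!}$.
\begin{itemize}
\item In (i) you have $a=k-m$ and $b=c=m$. The numerator then only involves integers $\le k+m-1\le \frac{3k}{2}-1<p$.
\item In (ii) take $k=2m+1$, rows $0,\dots,m$ and columns $m,\dots,2m$. The only entry not strictly above the diagonal is $1+\alpha-\beta=1=\binom{k}{0}$. This gives $a=m$, $b=c=m+1$, and numerator integers $\le 3m+1<\frac{3k}{2}$.
\item Your lifting argument for (iv) and (v) is also fine. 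With $\alpha=\beta=1$ everything lives in $\mathbb{Z}[X]$, so the resultant is a nonzero integer. Common roots lie in $\mathbb{F}$ automatically since $k\mid |\mathbb{F}|-1$. Finally, $p\neq 3$ (needed for $X^2+X+1$ to have two distinct roots) follows from $p\nmid k$.
\end{itemize}

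The gap is (iii) for even $k=2m$, which is the only part of (iii) not already covered by (ii).

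\emph{Why the ``same enlargement'' fails.} An $(m+1)\times(m+1)$ minor of a $2m\times 2m$ matrix always contains an entry strictly below the diagonal. Indeed, if every chosen column index were $\ge$ every chosen row index, the two index sets would share at most one element, which would require $2m+1$ distinct indices. The natural analogue, rows $0,\dots,m$ and columns $m-1,\dots,2m-1$, picks up the entry $\alpha\binom{k}{1}=k$ at position $(m,m-1)$. Its determinant is $D_1+(-1)^m kD_0$, where $D_1=\det\bigl(\binom{k}{m-1-s+t}\bigr)_{0\le s,t\le m}$ and $D_0=\det\bigl(\binom{k}{m-s+t}\bigr)_{0\le s,t<m}$. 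This is not a product, and it can vanish for admissible $p$. For $k=8$ one gets $D_1=731808$, $D_0=232848$ and $D_1+8D_0=2594592=13\cdot 199584$. Yet $p=13>12=\frac{3k}{2}$, and $q=169$ satisfies $8\mid q-1$.

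\emph{The missing idea.} Use $\alpha=\beta=1$ through the circulant structure. Then $c_{ij}=\binom{k}{(j-i)\bmod k}$, and the diagonal entry $1$ equals both $\binom{k}{0}$ and $\binom{k}{k}$. So you can wrap around and take rows $0,\dots,m$ and columns $m,m+1,\dots,2m-1,0$. This minor is exactly $\det\bigl(\binom{2m}{m-s+t}\bigr)_{0\le s,t\le m}=\prod_{i=0}^{m}\frac{i!\,(2m+i)!}{((m+i)!)^2}$. Its numerator only involves integers $\le 3m=\frac{3k}{2}<p$. Hence $\rank C^{(0,0)}\ge m+1$ and $(0,0)_e\le m-1=\lceil k/2\rceil-1$.

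This is precisely where $a=b=0$ is used. For $\alpha\neq 1$ the wrapped column is $(1,\alpha\binom{k}{1},\dots,\alpha\binom{k}{m})^{T}$ rather than a column of binomial coefficients. That is why (ii) is restricted to odd $k$.
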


In this paper, we take $q=p^n$, $r\in \mathbb{Z}_{\ge 2}$, and $k=\frac{q^r-1}{q-1}$; then $e=q-1$. We study the cyclotomic numbers $(a,b)_{q-1}$ for $0\le a,b\le q-2$. Since $q$ is fixed and $e=q-1$ is also fixed, \cite{Katre1989} yields $(a,b)_{q-1}\sim \frac{q^r}{(q-1)^2}$ as $r\to \infty$. The point of this paper is that, except for the cases $q=2$ and $r\ge 3$, the inequality $(a,b)_{q-1}\le \left\lceil \frac{k}{2}\right\rceil$ holds in this family. We prove the following theorem.

\begin{thm}[Main theorem]\label{thm:maintheorem}
Let $q=p^n$, $r\in \mathbb{Z}_{\ge 2}$, $k=\frac{q^r-1}{q-1}$, and $e=q-1$. Then the following statements hold for every $0\le a,b\le q-2$.
\begin{enumerate}[label=\textup{(\arabic*)},leftmargin=2.5em]
\item If $q\neq 2$, then $(a,b)_{q-1}\le \left\lceil \frac{k}{2}\right\rceil$. If $q=2$, then $(0,0)_1=2^r-2$. Hence equality holds when $r=2$, and the inequality fails when $r\ge 3$.

\item With the notation of Definition \ref{def:Wuv-cayley}, we have $(a,b)_{q-1}=\frac{q^r+(-1)^r(q-2)^r-2}{(q-1)^2}+(-1)^{r-1}W_{(-1)^r\omega^{ak},\omega^{bk}}(q,r)$.

\item If $r$ is an odd prime $\ell$, then
\[
0\le (a,b)_{q-1}\le
\begin{cases}
\ell q^{\ell-2}+1, & \textup{if } p=\ell,\\
\ell q^{\ell-2}+\ell-1, & \textup{if } p\neq\ell.
\end{cases}
\]
If $r=2$, then $(a,b)_{q-1}\in \{0,1,2\}$. If $r=3$, then $6\le (a,b)_{q-1}\le 2q+4$.
\end{enumerate}
\end{thm}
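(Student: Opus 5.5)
The plan is to reduce everything to a character-sum identity on $\mathbb{F}_q$, which yields (2), and then to read (1) and (3) off (2) by bounding $W$.

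\textbf{Norm description of the cosets.} The norm map $\Norm:\mathbb{F}_{q^r}^\times\to\mathbb{F}_q^\times$ is surjective with kernel of order $k=\frac{q^r-1}{q-1}$. Hence $\langle\omega^{q-1}\rangle=\Ker\Norm$. Since $\Norm(\omega)=\omega^{k}$, we get
\[
C_a=\{x\in\mathbb{F}_{q^r}^\times : \Norm(x)=\omega^{ak}\}.
\]
Put $u=\omega^{ak}$ and $v=\omega^{bk}$. Then $(a,b)_{q-1}=\#\{x:\Norm(x)=u,\ \Norm(x+1)=v\}$.

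\textbf{The case $q=2$.} The norm condition is vacuous, so the count is $\#(\mathbb{F}_{2^r}\setminus\{0,1\})=2^r-2$. This equals $\lceil k/2\rceil=\lceil (2^r-1)/2\rceil$ exactly when $r=2$, and exceeds it for $r\ge 3$.

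\textbf{Proof of (2).} Detect the two norm conditions with multiplicative characters $\chi,\psi$ of $\mathbb{F}_q^\times$:
\[
(a,b)_{q-1}=\frac{1}{(q-1)^2}\sum_{\chi,\psi}\bar\chi(u)\bar\psi(v)\sum_{x}(\chi\circ\Norm)(x)\,(\psi\circ\Norm)(x+1).
\]
The inner sum is a Jacobi sum over $\mathbb{F}_{q^r}$ for the lifted characters, up to the factor $\psi(-1)$ type signs coming from $x+1$ versus $1-x$. Write $J(\chi,\psi)=\sum_{y\in\mathbb{F}_q}\chi(y)\psi(1-y)$ for the Jacobi sum over $\mathbb{F}_q$.
\begin{itemize}[leftmargin=2em]
\item The pair $\chi=\psi=1$ contributes $q^r-2$.
\item When exactly one of $\chi,\psi$ is trivial, the contribution is $-1$.
\item When $\chi\psi=1$ with $\chi\neq1$, the sum is $-\chi(-1)$.
\item In the generic case, the Davenport--Hasse lifting theorem for Gauss sums gives a lifted Jacobi sum equal to $(-1)^{r-1}J(\chi,\psi)^r$, and the same relation persists in the degenerate cases after adjustment.
\end{itemize}
Expanding $J(\chi,\psi)^r$ as a sum over $r$-tuples $(y_1,\dots,y_r)\in(\mathbb{F}_q\setminus\{0,1\})^r$ and applying orthogonality turns $\sum\bar\chi(u)\bar\psi(v)J^r$ into $(q-1)^2$ times the number of tuples with $\prod y_i=\pm u$ and $\prod(1-y_i)=v$. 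I expect this to be exactly $W_{(-1)^r u,v}(q,r)$ of Definition~\ref{def:Wuv-cayley}. Collecting the trivial-character corrections, where a term $(q-2)^r$ appears because $J(1,1)=q-2$, should give the closed formula in (2). The sign bookkeeping is routine but error-prone.

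\textbf{Proof of (1).} Once (2) holds, a crude bound on $W$ suffices for large $q$. Fixing $y_1,\dots,y_{r-1}$ determines $y_r$ from $\prod y_i=\pm u$, so $0\le W\le (q-2)^{r-1}$. Insert this into (2), with the sign $(-1)^{r-1}$ handled according to the parity of $r$. Then compare with $\lceil k/2\rceil$, where $k=1+q+\dots+q^{r-1}$. The leading terms are $q^{r-1}/2$ against roughly $q^{r-2}+(q-2)^{r-1}$, which is favourable only for moderate $q$ relative to $r$. I therefore expect to need a sharper estimate: use both product constraints, which cut a curve so that $W\approx (q-2)^{r-1}/(q-1)$. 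Small cases such as $q=3,4$ would be checked separately, possibly through the rank formula $(a,b)_e=k-\rank C^{(a,b)}$.

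\textbf{Proof of (3).} For $r=\ell$ prime, the cyclic group $\mathbb{Z}/\ell$ acts on the tuples counted by $W$ by rotating coordinates. Its fixed points are constant tuples, with $y^\ell=\pm u$ and $(1-y)^\ell=v$. There is at most one such $y$ if $p=\ell$, since then $y\mapsto y^\ell$ is injective, and at most $\ell$ such $y$ otherwise. Hence $W\equiv(\text{fixed points})\pmod \ell$. Combining this congruence with the trivial bound and the exact main term should yield the two-case bound $\ell q^{\ell-2}+1$ or $\ell q^{\ell-2}+\ell-1$.

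For $r=2$, the conditions $y_1y_2=u'$ and $(1-y_1)(1-y_2)=v$ give $y_1+y_2=1+u'-v$. So the $y_i$ are the roots of a fixed quadratic, $W\in\{0,1,2\}$, and substituting into (2) gives $(a,b)_{q-1}\in\{0,1,2\}$. For $r=3$, eliminating $y_3$ leaves an affine plane conic in $(y_1,y_2)$. Counting its points, about $q\pm1$ minus the points excluded at $0$ and $1$, yields the interval $[6,2q+4]$.

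\textbf{Main obstacle.} The hardest step is getting bounds on $W$ sharp enough for (1) uniformly in all $q\ge3$ and $r$, and the exact constants in (3). There I would first try to realize $W$ as the point count of an explicit variety and count its rational points case by case.
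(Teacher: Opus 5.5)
Your norm description of the cosets, the case $q=2$, and the derivation of (2) follow the paper's route. Your list of degenerate values mixes the $x+1$ and $1-x$ conventions: for the lifted sums $\sum_{x\in\mathbb{F}_{q^r}}(\chi\circ\Norm)(x)(\psi\circ\Norm)(x+1)$, trivial $\psi$ gives $-\chi((-1)^r)$ and $\chi\psi=\one$ gives $-1$. This is harmless, because what you actually use, $J_{\mathbb{F}_{q^r}}(\chi\circ\Norm,\psi\circ\Norm)=(-1)^{r-1}J_{\mathbb{F}_q}(\chi,\psi)^r$ for all $(\chi,\psi)\neq(\one,\one)$, is true (cf.\ Lemma~\ref{lem:jacobi-norm-comparison}). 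Your $r=2$ argument is also correct, and shorter than the paper's discriminant/Artin--Schreier analysis: the main term in (2) is exactly $2$, so $(a,b)_{q-1}=2-W$ with $W\in\{0,1,2\}$.

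The genuine gap is in (1). For even $r$ nothing beyond $W\ge0$ is needed. There (2) reads $(a,b)_{q-1}=\frac{q^r+(q-2)^r-2}{(q-1)^2}-W$, and this main term is already at most $\lceil k/2\rceil$ for $q\ge3$ (cf.\ Remark~\ref{rem:half-k-adjacency-criterion}). For odd $r$, however, you must prove $W\le\frac{(q-3)q^r+2(q-2)^r+q^2-3q+6}{2(q-1)^2}$, and the proposal never does. The bound $(q-2)^{r-1}$ is too weak, as you observe. The claim ``$W\approx(q-2)^{r-1}/(q-1)$'' is only a heuristic. And for $r\ge4$ the two product conditions cut out an $(r-2)$-dimensional variety, not a curve. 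The paper obtains the needed estimate from $|J_{\mathbb{F}_q}(\chi,\psi)|=q^{1/2}$ for $\chi,\psi,\chi\psi\neq\one$. This bounds the non-degenerate part of the character sum by $\frac{(q-2)(q-3)}{(q-1)^2}q^{r/2}$, and makes it vanish for $q=3$. Your Davenport--Hasse step sets this up, but you never use the absolute value. An elementary repair inside your own framework also exists. Fix $y_1,\dots,y_{r-2}$; your $r=2$ argument then leaves at most two choices for $(y_{r-1},y_r)$, so $W\le2(q-2)^{r-2}$. The required inequality reduces to $(2q^2-4)(q-2)^{r-2}\le(q-3)q^r+q^2-3q+6$, which holds for $q\ge4$, $r\ge3$. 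For $q=3$ one has $W\le1$ trivially, which is exactly the threshold.

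Part (3) has the same defect. The congruence $W\equiv(\text{number of constant tuples})\pmod{\ell}$ carries no size information, and with the trivial bound you cannot reach $\ell q^{\ell-2}+\ell-1$. For $\ell=3$ the main term $\frac{q^3-(q-2)^3-2}{(q-1)^2}$ equals exactly $6$, so you only get $(a,b)_{q-1}\le(q-2)^2+6$, which exceeds $3q+2$ for every $q\ge7$. The paper argues in $\mathbb{F}_{q^\ell}$ instead:
\begin{itemize}
\item Solutions lying in $\mathbb{F}_q$ number at most $\ell-1$ when $p\neq\ell$, since they are roots of $(X+1)^\ell-X^\ell+\omega^{ak}-\omega^{bk}$, which then has degree $\ell-1$. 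When $p=\ell$ there is at most one, since $x\mapsto x^\ell$ is bijective.
\item The remaining solutions fall into Frobenius orbits of size $\ell$, indexed by their minimal polynomials. These are monic of degree $\ell$ with $f(0)=-\omega^{ak}$ and $f(-1)=-\omega^{bk}$ prescribed, so there are at most $q^{\ell-2}$ of them.
\end{itemize}
Your fixed-point count is a shadow of this orbit decomposition, but applied to a set where it yields no bound.

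For $r=3$, eliminating $y_3$ from $y_1y_2y_3=u'$ and $(1-y_1)(1-y_2)(1-y_3)=v$ gives $(1-y_1)(1-y_2)(y_1y_2-u')=vy_1y_2$. This is a quartic, not a conic, so ``about $q\pm1$ points'' is unfounded. No point count is needed, however: $(a,b)_{q-1}=6+W$ with $0\le W\le2(q-2)$ gives $6\le(a,b)_{q-1}\le2q+2$, slightly better than the stated $2q+4$.
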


\begin{proof}
\textup{(1)} follows from Corollary \ref{cor:character-sum-half-k-general}. \textup{(2)} follows from Proposition \ref{prop:general-order-qminus1-graph}.
\textup{(3)} follows from Corollaries \ref{cor:r2-general-bound}, \ref{cor:V-bound-odd-prime-general}, and \ref{cor:r3-structure-general}.
\end{proof}

In \S\ref{sec:lemmas}, we collect the facts and lemmas needed for the proof. In \S\ref{sec:order-qminus1}, we investigate whether $(a,b)_{q-1}\le \left\lceil \frac{k}{2}\right\rceil$ holds or not. In \S\ref{sec:cayley-digraph}, we give an exact formula in terms of a directed Cayley graph. In \S\ref{sec:small-r}, we prove sharper bounds in the case where $r$ is an odd prime $\ell$, with the case $r=3$ considered separately.

\section{Notation and preliminaries}\label{sec:lemmas}

In this section, we collect the facts and lemmas needed for the proof of the main theorem. The value of $\lceil k/2\rceil$ is 
\begin{equation}\label{eq:ceil-half-k}
\left\lceil \frac{k}{2}\right\rceil=
\begin{cases}
\frac{q^r+q-2}{2(q-1)} & \text{if $q$ is even,} \\
\frac{q^r+q-2}{2(q-1)} & \text{if $q$ is odd and $r$ is odd,} \\
\frac{q^r-1}{2(q-1)} & \text{if $q$ is odd and $r$ is even.}
\end{cases}
\end{equation}

Throughout this section, $\mathbb{F}$ denotes a finite field of characteristic $p$. A multiplicative character of $\mathbb{F}^{\times}$ means a group homomorphism $\chi:\mathbb{F}^{\times}\to\mathbb{C}^{\times}$. We extend each multiplicative character to all of $\mathbb{F}$ by setting $\chi(0):=0$. Let $\widehat{\mathbb{F}^{\times}}$ denote the group of all multiplicative characters of $\mathbb{F}^{\times}$. The trivial multiplicative character is denoted by $\one$. For a condition $P$, we write $\one_P:=1$ if $P$ holds, otherwise $\one_P:=0$.

\begin{fac}[c.f.\cite{LidlNiederreiter1997}, Theorem~5.4]\label{fac:standard-facts}
Let $\mathbb{F}$ be a finite field. For every $u\in \mathbb{F}$, we have $\frac{1}{|\mathbb{F}|-1}\sum_{\chi\in\widehat{\mathbb{F}^{\times}}}\chi(u)=\one_{u=1}$.
\end{fac}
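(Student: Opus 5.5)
We verify the identity by splitting into the cases $u=0$, $u=1$, and $u\in\mathbb{F}^{\times}\setminus\{1\}$. Throughout, $m:=|\mathbb{F}|-1$, and $\omega$ is a generator of the cyclic group $\mathbb{F}^{\times}$, as in the definition of cyclotomic cosets.

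First we describe $\widehat{\mathbb{F}^{\times}}$ explicitly. A homomorphism $\chi:\mathbb{F}^{\times}\to\mathbb{C}^{\times}$ is determined by $\chi(\omega)$. Since $\chi(\omega)^m=\chi(\omega^m)=\chi(1)=1$, the value $\chi(\omega)$ must be an $m$-th root of unity. Conversely, for any $m$-th root of unity $\zeta$, the rule $\omega^j\mapsto\zeta^j$ is well defined, because $\omega^j=\omega^{j'}$ forces $m\mid j-j'$, and it is clearly a homomorphism. Hence $\widehat{\mathbb{F}^{\times}}=\{\chi_t:\ 0\le t\le m-1\}$, where $\chi_t(\omega^j)=e^{2\pi i tj/m}$, and in particular $|\widehat{\mathbb{F}^{\times}}|=m$.

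Now the three cases.
\begin{itemize}
\item If $u=0$, every term $\chi(0)$ vanishes by the convention $\chi(0):=0$. So the left-hand side is $0=\one_{0=1}$.
\item If $u=1$, every term equals $\chi(1)=1$. So the sum is $|\widehat{\mathbb{F}^{\times}}|=m$, and dividing by $m$ gives $1$.
\item If $u\in\mathbb{F}^{\times}\setminus\{1\}$, write $u=\omega^j$ with $0<j<m$. Then $\chi_1(u)=e^{2\pi i j/m}\neq 1$. Put $S:=\sum_{\chi}\chi(u)$. Since $\chi\mapsto\chi_1\chi$ is a bijection of the group $\widehat{\mathbb{F}^{\times}}$ onto itself, we get
\[
\chi_1(u)\,S=\sum_{\chi}(\chi_1\chi)(u)=S .
\]
Thus $(\chi_1(u)-1)S=0$, and since $\chi_1(u)\neq 1$ this forces $S=0=\one_{u=1}$.
\end{itemize}

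No step is a genuine obstacle. The only point needing care is the count $|\widehat{\mathbb{F}^{\times}}|=m$, which relies on the cyclicity of $\mathbb{F}^{\times}$ and is exactly what the explicit description of the characters $\chi_t$ supplies.
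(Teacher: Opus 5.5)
Your proof is correct. The paper gives no argument of its own: it cites the orthogonality relation of Lidl--Niederreiter, Theorem~5.4. Your shift argument is exactly the standard proof of that relation: you use the cyclicity of $\mathbb{F}^{\times}$ to produce $\chi_1$ with $\chi_1(u)\neq 1$, then deduce $\chi_1(u)S=S$. It is the character-side analogue of the geometric-series computation in Lemma~\ref{lem:nontrivial-multiplicative-character-sum-zero}.

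Your separate treatment of $u=0$ is a worthwhile addition, because the cited theorem only concerns the group $\mathbb{F}^{\times}$. The paper, however, applies the Fact in Proposition~\ref{prop:exact-character-formula-general} to $u=\Norm(x)\omega^{-ak}$ and $u=\Norm(x+1)\omega^{-bk}$, which vanish at $x=0$ and $x=-1$. There the identity holds only because of the convention $\chi(0):=0$ for every $\chi$, including $\one$.
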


\begin{lem}\label{lem:nontrivial-multiplicative-character-sum-zero}
Let $\eta\in\widehat{\mathbb{F}^{\times}}\backslash\{\one\}$. Then
$\sum_{y\in\mathbb{F}}\eta(y)=0$.
\end{lem}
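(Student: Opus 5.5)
The plan is the standard argument: pick a witness $y_0$ with $\eta(y_0)\neq 1$, and use the fact that multiplication by a nonzero element permutes $\mathbb{F}$.

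First, since $\eta\neq\one$, there is some $y_0\in\mathbb{F}^{\times}$ with $\eta(y_0)\neq 1$. The map $y\mapsto y_0y$ is a bijection of $\mathbb{F}$ onto itself; it sends $0$ to $0$ and permutes $\mathbb{F}^{\times}$. Put $S:=\sum_{y\in\mathbb{F}}\eta(y)$. Reindexing by this bijection and using multiplicativity of $\eta$ on $\mathbb{F}^{\times}$ gives
\[
S=\sum_{y\in\mathbb{F}}\eta(y_0y)=\eta(y_0)\sum_{y\in\mathbb{F}}\eta(y)=\eta(y_0)S.
\]
The convention $\eta(0)=0$ is needed here. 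It makes the identity $\eta(y_0\cdot 0)=\eta(y_0)\eta(0)$ hold trivially, so the term $y=0$ causes no trouble.

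From $(1-\eta(y_0))S=0$ and $\eta(y_0)\neq 1$ we conclude $S=0$.

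An alternative route is through Fact~\ref{fac:standard-facts}, using orthogonality in the dual group with the roles of characters and elements swapped. The direct argument above is shorter, so I would use that. No step is a real obstacle. The only point needing care is the $y=0$ term, which contributes $0$ on both sides by the convention $\eta(0)=0$.
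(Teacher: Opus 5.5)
Your proof is correct, and it takes a slightly different route from the paper's. The paper fixes a generator $\omega$ of $\mathbb{F}^{\times}$ and notes that $\eta(\omega)\neq 1$ because $\eta\neq\one$ and $\omega$ generates. It then evaluates $\sum_{s=0}^{|\mathbb{F}|-2}\eta(\omega)^s$ explicitly as the geometric series $\frac{\eta(\omega)^{|\mathbb{F}|-1}-1}{\eta(\omega)-1}=0$.

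You instead use the invariance $S=\eta(y_0)S$ under the permutation $y\mapsto y_0y$ of $\mathbb{F}$. Your argument needs neither the cyclicity of $\mathbb{F}^{\times}$ nor any explicit summation formula, so it works verbatim for a nontrivial character of any finite group. The paper's version is a concrete computation that uses the generator $\omega$ already fixed in its setup.

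The two proofs are close relatives. Taking $y_0=\omega$, your shift $y\mapsto\omega y$ is exactly the index shift $s\mapsto s+1$ behind the geometric-series identity. Both treat the $y=0$ term the same way, through the convention $\eta(0)=0$.
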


\begin{proof}
Since $\eta(0)=0$, we have
$\sum_{y\in\mathbb{F}}\eta(y)=\sum_{y\in\mathbb{F}^{\times}}\eta(y)$.
Let $\langle \omega\rangle= \mathbb{F}^{\times}$. Since $\eta\neq\one$, we have $\eta(\omega)\neq 1$. Hence
$\sum_{y\in\mathbb{F}^{\times}}\eta(y)=\sum_{s=0}^{|\mathbb{F}|-2}\eta(\omega^s)=\sum_{s=0}^{|\mathbb{F}|-2}\eta(\omega)^s
=\frac{\eta(\omega)^{|\mathbb{F}|-1}-1}{\eta(\omega)-1}=0$.
Therefore $\sum_{y\in\mathbb{F}}\eta(y)=0$.
\end{proof}

Now let $\mathbb{F}/\mathbb{K}$ be a finite extension of finite fields, and put $d:=[\mathbb{F}:\mathbb{K}]$. We define $\sigma_{\mathbb{F}/\mathbb{K}}:\mathbb{F}\to\mathbb{F}$ $(\sigma_{\mathbb{F}/\mathbb{K}}(x)\mapsto x^{|\mathbb{K}|})$. Then $\Gal(\mathbb{F}/\mathbb{K})=\langle\sigma_{\mathbb{F}/\mathbb{K}}\rangle$ and we define the following notation.

\begin{itemize}
  \item \begin{tabular}[t]{@{}p{0.28\linewidth}@{\ }c@{\ }l@{}}
    the relative trace
    & ; &
    \begin{tabular}[t]{@{}p{0.1\linewidth}@{\ }c@{\ }p{0.1\linewidth}@{\ }l@{}}
      \hfill$\Tr_{\mathbb{F}/\mathbb{K}}$ & : & $\mathbb{F}\to\mathbb{K}$ & $\bigl(x\mapsto \sum_{\mu=0}^{d-1}\sigma_{\mathbb{F}/\mathbb{K}}^{\mu}(x)\bigr)$
    \end{tabular}
  \end{tabular}

  \item \begin{tabular}[t]{@{}p{0.28\linewidth}@{\ }c@{\ }l@{}}
    the relative norm
    & ; &
    \begin{tabular}[t]{@{}p{0.1\linewidth}@{\ }c@{\ }p{0.1\linewidth}@{\ }l@{}}
      \hfill$\Norm_{\mathbb{F}/\mathbb{K}}$ & : & $\mathbb{F}\to\mathbb{K}$ & $\bigl(x\mapsto \prod_{\mu=0}^{d-1}\sigma_{\mathbb{F}/\mathbb{K}}^{\mu}(x)\bigr)$
    \end{tabular}
  \end{tabular}

  \item \begin{tabular}[t]{@{}p{0.28\linewidth}@{\ }c@{\ }l@{}}
    the additive character of $\mathbb{F}$
    & ; &
    \begin{tabular}[t]{@{}p{0.1\linewidth}@{\ }c@{\ }p{0.1\linewidth}@{\ }l@{}}
      \hfill$\mathbf{e}_{\mathbb{F}}$ & : & $\mathbb{F}\to\mathbb{C}^{\times}$ & $\bigl(x\mapsto \exp \bigl(\frac{2\pi i}{p}\Tr_{\mathbb{F}/\mathbb{F}_p}(x)\bigr)\bigr)$
    \end{tabular}
  \end{tabular}

  \item \begin{tabular}[t]{@{}p{0.28\linewidth}@{\ }c@{\ }l@{}}
    the Gauss sum
    & ; &
    \begin{tabular}[t]{@{}p{0.1\linewidth}@{\ }c@{\ }p{0.1\linewidth}@{\ }l@{}}
      \hfill$G_{\mathbb{F}}$ & : & $\widehat{\mathbb{F}^{\times}}\to\mathbb{C}$ & $\bigl(\chi\mapsto \sum_{x\in \mathbb{F}^{\times}}\chi(x)\mathbf{e}_{\mathbb{F}}(x)\bigr)$
    \end{tabular}
  \end{tabular}

  \item \begin{tabular}[t]{@{}p{0.28\linewidth}@{\ }c@{\ }l@{}}
    the Jacobi sums
    & ; &
    $\begin{aligned}[t]
      J'_{\mathbb{F}},J_{\mathbb{F}}
      &: \widehat{\mathbb{F}^{\times}}\times\widehat{\mathbb{F}^{\times}}\to\mathbb{C}\ \text{by}\ 
      \left\{
      \begin{aligned}
        J'_{\mathbb{F}}(\chi,\psi)&:=\sum_{x\in \mathbb{F}}\chi(x)\psi(1-x),\\
        J_{\mathbb{F}}(\chi,\psi)&:=\sum_{x\in \mathbb{F}}\chi(x)\psi(x+1)
      \end{aligned}
      \right.
    \end{aligned}$
  \end{tabular}
\end{itemize}

The restriction of $\Norm_{\mathbb{F}/\mathbb{K}}$ to $\mathbb{F}^{\times}$ is a surjective group homomorphism onto $\mathbb{K}^{\times}$. Under our assumption, $\Norm_{\mathbb{F}_{q^r}/\mathbb{F}_q}(x)=x^k$ for $x\in\mathbb{F}_{q^r}$. By the definition, we have
\begin{equation}\label{eq:J-Jprime-relation}
J_{\mathbb{F}}(\chi,\psi)=\chi(-1)J'_{\mathbb{F}}(\chi,\psi).
\end{equation}

\begin{fac}[\cite{BerndtEvansWilliams1998}, Theorems~1.1.4(a,d), 2.1.3(a), 11.5.2]\label{fac:gauss-jacobi}
The following statements hold.
\begin{enumerate}[label=\textup{(\arabic*)},leftmargin=2.5em]
  \item If $\chi,\psi,\chi\psi\neq \one$ in $\widehat{\mathbb{F}^{\times}}$, then $J'_{\mathbb{F}}(\chi,\psi)=\frac{G_{\mathbb{F}}(\chi)G_{\mathbb{F}}(\psi)}{G_{\mathbb{F}}(\chi\psi)}$.

  \item If $\rho\neq \one$ in $\widehat{\mathbb{F}^{\times}}$, then $|G_{\mathbb{F}}(\rho)|=|\mathbb{F}|^{1/2}$.

  \item If $\rho\in\widehat{\mathbb{K}^{\times}}$ and $\widetilde{\rho}:=\rho\circ\Norm_{\mathbb{F}/\mathbb{K}}$, then $G_{\mathbb{F}}(\widetilde{\rho})=(-1)^{[\mathbb{F}:\mathbb{K}]-1}G_{\mathbb{K}}(\rho)^{[\mathbb{F}:\mathbb{K}]}$.
\end{enumerate}
\end{fac}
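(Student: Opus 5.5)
The three items are classical, and I would prove them in order, using only orthogonality (Fact \ref{fac:standard-facts}, Lemma \ref{lem:nontrivial-multiplicative-character-sum-zero}) and the analogous orthogonality of the additive characters $x\mapsto\mathbf{e}_{\mathbb{F}}(cx)$.

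For (1), expand the product
\[
G_{\mathbb{F}}(\chi)G_{\mathbb{F}}(\psi)=\sum_{x,y}\chi(x)\psi(y)\,\mathbf{e}_{\mathbb{F}}(x+y)
\]
and group the terms by $t=x+y$.
\begin{itemize}
\item For $t\neq0$, substitute $x=tu$ and $y=t(1-u)$. This contributes $\chi\psi(t)\mathbf{e}_{\mathbb{F}}(t)J'_{\mathbb{F}}(\chi,\psi)$, and summing over $t$ gives $G_{\mathbb{F}}(\chi\psi)J'_{\mathbb{F}}(\chi,\psi)$.
\item For $t=0$, the contribution is $\chi(-1)\sum_x\chi\psi(x)$. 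This vanishes by Lemma \ref{lem:nontrivial-multiplicative-character-sum-zero}, since $\chi\psi\neq\one$.
\end{itemize}
By (2), $G_{\mathbb{F}}(\chi\psi)\neq0$, so we may divide and obtain (1).

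For (2), compute
\[
|G_{\mathbb{F}}(\rho)|^2=\sum_{x,y\neq0}\rho(xy^{-1})\,\mathbf{e}_{\mathbb{F}}(x-y).
\]
Put $x=sy$. The inner sum over $y$ of $\mathbf{e}_{\mathbb{F}}(y(s-1))$ equals $|\mathbb{F}|-1$ if $s=1$ and $-1$ otherwise. Hence the total is
\[
|\mathbb{F}|-1-\sum_{s\neq1}\rho(s)=|\mathbb{F}|,
\]
again by Lemma \ref{lem:nontrivial-multiplicative-character-sum-zero}.

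Item (3) is the Davenport--Hasse lifting theorem and is the main obstacle. My approach is Stickelberger-free and goes through $L$-functions. Let $q_0=|\mathbb{K}|$. For a monic polynomial $f=T^m+c_1T^{m-1}+\dots+c_m\in\mathbb{K}[T]$, set
\[
\lambda(f):=\rho\bigl((-1)^mc_m\bigr)\,\mathbf{e}_{\mathbb{K}}(c_1).
\]
This is multiplicative in $f$, since the leading and constant coefficients multiply and the subleading coefficients add. For $f$ the minimal polynomial of $\alpha$ over $\mathbb{K}$ of degree $m$, one has $\lambda(f)^{d/m}=\rho(\Norm_{\mathbb{F}/\mathbb{K}}\alpha)\,\mathbf{e}_{\mathbb{F}}(\alpha)$, using transitivity of trace and norm.

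Consider $L(T)=\sum_f\lambda(f)T^{\deg f}$. The degree-$m$ coefficient vanishes for $m\ge2$: there $c_1$ and $c_m$ vary independently, and $\rho\neq\one$ kills the sum. Also $\lambda(1)=1$, and the degree-$1$ coefficient is $\sum_c\rho(-c)\mathbf{e}_{\mathbb{K}}(-c)=G_{\mathbb{K}}(\rho)$. So $L(T)=1+G_{\mathbb{K}}(\rho)T$.

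Now compare this with the Euler product over monic irreducibles $P$:
\[
L(T)=\prod_P\bigl(1-\lambda(P)T^{\deg P}\bigr)^{-1}.
\]
Take $T\frac{d}{dT}\log$ of both sides and read off the coefficient of $T^d$.
\begin{itemize}
\item On the Euler product side, grouping the $\alpha\in\mathbb{F}^\times$ by their minimal polynomials gives $\sum_{\alpha\in\mathbb{F}^\times}\widetilde{\rho}(\alpha)\mathbf{e}_{\mathbb{F}}(\alpha)=G_{\mathbb{F}}(\widetilde\rho)$.
\item On the closed-form side, $\log(1+G_{\mathbb{K}}(\rho)T)$ gives $-(-G_{\mathbb{K}}(\rho))^d=(-1)^{d-1}G_{\mathbb{K}}(\rho)^d$.
\end{itemize}
The delicate points are checking that $\lambda$ is multiplicative, and the bookkeeping in grouping $\alpha$ by minimal polynomial. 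The latter uses the fact that each $\alpha$ with $[\mathbb{K}(\alpha):\mathbb{K}]=m$ contributes $\lambda(f)^{d/m}$ and appears among the $m$ roots of $f$.
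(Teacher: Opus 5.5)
The paper does not prove this statement. It imports it from Berndt--Evans--Williams and uses it as a black box. Your argument is correct and makes the Fact self-contained, relying only on orthogonality of additive and multiplicative characters and unique factorization in $\mathbb{K}[T]$.
\begin{itemize}
\item Item (1) is the convolution identity $G_{\mathbb{F}}(\chi)G_{\mathbb{F}}(\psi)=G_{\mathbb{F}}(\chi\psi)J'_{\mathbb{F}}(\chi,\psi)$. Invoking (2) to divide is not circular, since your proof of (2) is independent.
\item Item (2) is the usual computation of $|G_{\mathbb{F}}(\rho)|^2$.
\item Item (3) is the classical Davenport--Hasse argument via $L(T)=1+G_{\mathbb{K}}(\rho)T$ and its Euler product, i.e.\ the standard textbook proof of the lifting theorem.
\end{itemize}
The citation buys brevity. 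Your route buys a proof, rather than a quotation, of the lifting relation behind Lemma \ref{lem:jacobi-norm-comparison}, on which \S\ref{sec:order-qminus1} and \S\ref{sec:cayley-digraph} rest.

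Two small repairs are needed in (3).

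First, a sign. For the minimal polynomial $f$ of $\alpha$ one has $c_1=-\Tr_{\mathbb{K}(\alpha)/\mathbb{K}}(\alpha)$. So with $\lambda(f)=\rho((-1)^mc_m)\mathbf{e}_{\mathbb{K}}(c_1)$ as you wrote it, you actually get $\lambda(T+c)=\rho(-c)\mathbf{e}_{\mathbb{K}}(c)$ and $\lambda(f)^{d/m}=\rho(\Norm_{\mathbb{F}/\mathbb{K}}\alpha)\,\mathbf{e}_{\mathbb{F}}(-\alpha)$. These are not the two expressions you use. Define instead $\lambda(f):=\rho((-1)^mc_m)\mathbf{e}_{\mathbb{K}}(-c_1)$; multiplicativity is unaffected, and both of your claims become true. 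Alternatively, keep your $\lambda$ and observe that both sides acquire the factor $\widetilde{\rho}(-1)=\rho(-1)^d$, which cancels.

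Second, the statement of (3) allows $\rho=\one$, which your justification ``$\rho\neq\one$ kills the sum'' excludes. This is harmless. For $m\ge2$ the degree-$m$ coefficient equals $q_0^{m-2}\bigl(\sum_{c_m}\rho((-1)^mc_m)\bigr)\bigl(\sum_{c_1}\mathbf{e}_{\mathbb{K}}(-c_1)\bigr)$, and the additive factor vanishes for every $\rho$. So the same argument covers $\rho=\one$, where both sides equal $-1$ anyway. In any case the paper only applies (3) to nontrivial characters, in case (i) of the proof of Lemma \ref{lem:jacobi-norm-comparison}.
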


\begin{lem}\label{lem:jacobi-absolute-value}
Let $q=p^n$, and let $\chi,\psi\in\widehat{\mathbb{F}_q^\times}$. If $\chi,\psi,\chi\psi\neq\one$, then
\begin{equation}\label{eq:jacobi-absolute-value}
|J_{\mathbb{F}_q}(\chi,\psi)|=q^{1/2}.
\end{equation}
\end{lem}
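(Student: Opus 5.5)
The plan is to reduce $J_{\mathbb{F}_q}$ to $J'_{\mathbb{F}_q}$ via \eqref{eq:J-Jprime-relation}, and then evaluate $|J'_{\mathbb{F}_q}|$ using the Gauss sum expression in Fact~\ref{fac:gauss-jacobi}. Since $(-1)^{q-1}=1$, we have $\chi(-1)^{q-1}=\chi((-1)^{q-1})=\chi(1)=1$, so $\chi(-1)$ is a root of unity. In particular $|\chi(-1)|=1$; in fact $\chi(-1)=\pm1$, although only the absolute value matters here. Hence \eqref{eq:J-Jprime-relation} with $\mathbb{F}=\mathbb{F}_q$ gives
\[
|J_{\mathbb{F}_q}(\chi,\psi)|=|\chi(-1)|\,|J'_{\mathbb{F}_q}(\chi,\psi)|=|J'_{\mathbb{F}_q}(\chi,\psi)|.
\]

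Next, the hypothesis $\chi,\psi,\chi\psi\neq\one$ is exactly what Fact~\ref{fac:gauss-jacobi}(1) requires, so
\[
J'_{\mathbb{F}_q}(\chi,\psi)=\frac{G_{\mathbb{F}_q}(\chi)G_{\mathbb{F}_q}(\psi)}{G_{\mathbb{F}_q}(\chi\psi)}.
\]
By Fact~\ref{fac:gauss-jacobi}(2), each of the three Gauss sums has absolute value $q^{1/2}$, because each character involved is nontrivial. This also guarantees that the denominator is nonzero, so the quotient is well defined. Taking absolute values, we get $|J'_{\mathbb{F}_q}(\chi,\psi)|=q^{1/2}\cdot q^{1/2}/q^{1/2}=q^{1/2}$, which combined with the first step gives \eqref{eq:jacobi-absolute-value}.

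There is no real obstacle here; the only point needing care is that all three characters $\chi$, $\psi$, $\chi\psi$ are nontrivial. This is used twice: once to invoke Fact~\ref{fac:gauss-jacobi}(1), and once to apply Fact~\ref{fac:gauss-jacobi}(2) to the denominator, which also rules out division by zero.
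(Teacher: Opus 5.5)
Your proof is correct and follows essentially the same route as the paper: Fact~\ref{fac:gauss-jacobi}(1) and (2) give $|J'_{\mathbb{F}_q}(\chi,\psi)|=q^{1/2}$, and then \eqref{eq:J-Jprime-relation} transfers this to $J_{\mathbb{F}_q}$. The paper does the same two steps in the opposite order. You also spell out $|\chi(-1)|=1$ and the nonvanishing of $G_{\mathbb{F}_q}(\chi\psi)$, which the paper leaves implicit.
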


\begin{proof}
By Fact \ref{fac:gauss-jacobi} \textup{(1)} applied to $\mathbb{F}_q$, we have
$J'_{\mathbb{F}_q}(\chi,\psi)=\frac{G_{\mathbb{F}_q}(\chi)G_{\mathbb{F}_q}(\psi)}{G_{\mathbb{F}_q}(\chi\psi)}$.
Combining this with Fact \ref{fac:gauss-jacobi} \textup{(2)} applied to $\mathbb{F}_q$, we obtain
$|J'_{\mathbb{F}_q}(\chi,\psi)|=q^{1/2}$.
By \eqref{eq:J-Jprime-relation}, we have
$|J_{\mathbb{F}_q}(\chi,\psi)|=|J'_{\mathbb{F}_q}(\chi,\psi)|=q^{1/2}$.
\end{proof}

\begin{lem}\label{lem:jacobi-special-values}
Let $\chi,\psi\in\widehat{\mathbb{F}^{\times}}$. Then the following statements hold.
\begin{enumerate}[label=\textup{(\arabic*)},leftmargin=2.5em]
\item If $\chi=\one$ and $\psi=\one$, then
\begin{equation}\label{eq:jacobi-special-trivial-trivial}
J'_{\mathbb{F}}(\one,\one)=J_{\mathbb{F}}(\one,\one)=|\mathbb{F}|-2.
\end{equation}

\item If $\chi=\one$ and $\psi\neq\one$, then
\begin{equation}\label{eq:jacobi-special-left-trivial}
J'_{\mathbb{F}}(\one,\psi)=J_{\mathbb{F}}(\one,\psi)=-1.
\end{equation}

\item If $\chi\neq\one$ and $\psi=\one$, then
\begin{subequations}
\begin{align}
J'_{\mathbb{F}}(\chi,\one)&=-1,\label{eq:jacobi-special-right-trivial-Jprime}\\
J_{\mathbb{F}}(\chi,\one)&=-\chi(-1).\label{eq:jacobi-special-right-trivial-J}
\end{align}
\end{subequations}

\item If $\chi\neq\one$, $\psi\neq\one$, and $\chi\psi=\one$, then
\begin{subequations}
\begin{align}
J'_{\mathbb{F}}(\chi,\psi)&=-\chi(-1),\label{eq:jacobi-special-inverse-Jprime}\\
J_{\mathbb{F}}(\chi,\psi)&=-1.\label{eq:jacobi-special-inverse-J}
\end{align}
\end{subequations}
\end{enumerate}
\end{lem}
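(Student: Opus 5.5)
Each of the four parts is a direct computation from the definitions $J'_{\mathbb{F}}(\chi,\psi)=\sum_{x\in\mathbb{F}}\chi(x)\psi(1-x)$ and $J_{\mathbb{F}}(\chi,\psi)=\sum_{x\in\mathbb{F}}\chi(x)\psi(x+1)$. Three inputs are used throughout: the convention $\chi(0)=0$ (so in particular $\one(0)=0$), Lemma \ref{lem:nontrivial-multiplicative-character-sum-zero}, and relation \eqref{eq:J-Jprime-relation}. I plan to compute $J'_{\mathbb{F}}$ in each case and then transfer the result to $J_{\mathbb{F}}$ via $J_{\mathbb{F}}(\chi,\psi)=\chi(-1)J'_{\mathbb{F}}(\chi,\psi)$.

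\textbf{Case (1).} The summand $\one(x)\one(1-x)$ equals $1$ exactly when $x\neq 0,1$. Hence $J'_{\mathbb{F}}(\one,\one)=|\mathbb{F}|-2$. Since $\one(-1)=1$, the same value holds for $J_{\mathbb{F}}$.

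\textbf{Case (2).} Here $J'_{\mathbb{F}}(\one,\psi)=\sum_{x\neq 0}\psi(1-x)$. Substituting $y=1-x$, this becomes $\sum_{y\in\mathbb{F}}\psi(y)-\psi(1)=0-1=-1$ by Lemma \ref{lem:nontrivial-multiplicative-character-sum-zero}. Since $\one(-1)=1$, we get $J_{\mathbb{F}}(\one,\psi)=-1$ as well.

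\textbf{Case (3).} Symmetrically, $J'_{\mathbb{F}}(\chi,\one)=\sum_{x\neq 1}\chi(x)=0-\chi(1)=-1$. Then \eqref{eq:J-Jprime-relation} gives $J_{\mathbb{F}}(\chi,\one)=-\chi(-1)$.

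\textbf{Case (4).} This is the only step needing a small trick. Since $\psi=\chi^{-1}=\overline{\chi}$, for $x\neq 0,1$ we can write
\[
\chi(x)\psi(1-x)=\chi\bigl(x/(1-x)\bigr).
\]
The map $x\mapsto t=x/(1-x)$ is a bijection from $\mathbb{F}\setminus\{0,1\}$ onto $\mathbb{F}\setminus\{0,-1\}$, with inverse $t\mapsto t/(1+t)$. Therefore
\[
J'_{\mathbb{F}}(\chi,\psi)=\sum_{t\in\mathbb{F}}\chi(t)-\chi(0)-\chi(-1)=-\chi(-1),
\]
again by Lemma \ref{lem:nontrivial-multiplicative-character-sum-zero}. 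Finally, $J_{\mathbb{F}}(\chi,\psi)=\chi(-1)\cdot(-\chi(-1))=-\chi(-1)^2=-\chi(1)=-1$.

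The only point requiring care is tracking the excluded points $x=0,1$ and their images under the substitutions. In characteristic $2$ we have $-1=1$, but the argument above does not depend on $-1\neq 1$, so it goes through unchanged there.
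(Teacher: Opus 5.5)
Your proposal is correct and follows the paper's proof essentially step for step. Both compute $J'_{\mathbb{F}}$ first in each case, using $\one(0)=0$, the substitution $y=1-x$, the bijection $x\mapsto x(1-x)^{-1}$ from $\mathbb{F}\setminus\{0,1\}$ onto $\mathbb{F}\setminus\{0,-1\}$, and Lemma~\ref{lem:nontrivial-multiplicative-character-sum-zero}, and then pass to $J_{\mathbb{F}}$ via \eqref{eq:J-Jprime-relation}; your remark that nothing depends on $-1\neq 1$ is also right, so characteristic $2$ needs no separate treatment.
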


\begin{proof}
First, we compute the values of $J'_{\mathbb{F}}$. Then the corresponding values of $J_{\mathbb{F}}$ follow from \eqref{eq:J-Jprime-relation}.

\noindent\textup{(1)}: Assume that $\chi=\one$ and $\psi=\one$. Then
$J'_{\mathbb{F}}(\one,\one)=\sum_{x\in\mathbb{F}}\one(x)\one(1-x)=|\mathbb{F}\setminus\{0,1\}|=|\mathbb{F}|-2$.
Since $\one(-1)=1$, \eqref{eq:J-Jprime-relation} gives
$J_{\mathbb{F}}(\one,\one)=|\mathbb{F}|-2$.
It proves \eqref{eq:jacobi-special-trivial-trivial}.

\noindent\textup{(2)}: Assume that $\chi=\one$ and $\psi\neq\one$. Then $J'_{\mathbb{F}}(\one,\psi)=\sum_{x\in\mathbb{F}\setminus\{0\}}\psi(1-x)$. Putting $y=1-x$, we obtain $J'_{\mathbb{F}}(\one,\psi)=\sum_{y\in\mathbb{F}\setminus\{1\}}\psi(y)=-1$ by Lemma \ref{lem:nontrivial-multiplicative-character-sum-zero}. Since $\one(-1)=1$, \eqref{eq:J-Jprime-relation} gives $J_{\mathbb{F}}(\one,\psi)=-1$. It proves \eqref{eq:jacobi-special-left-trivial}.

\noindent\textup{(3)}: Assume that $\chi\neq\one$ and $\psi=\one$. Then $J'_{\mathbb{F}}(\chi,\one)=\sum_{x\in\mathbb{F}\setminus\{1\}}\chi(x)=-1$ by Lemma \ref{lem:nontrivial-multiplicative-character-sum-zero}. Hence \eqref{eq:J-Jprime-relation} gives $J_{\mathbb{F}}(\chi,\one)=-\chi(-1)$. It proves \eqref{eq:jacobi-special-right-trivial-Jprime} and \eqref{eq:jacobi-special-right-trivial-J}.

\noindent\textup{(4)}: Assume that $\chi\neq\one$, $\psi\neq\one$, and $\chi\psi=\one$. Then $\psi=\chi^{-1}$. Hence $J'_{\mathbb{F}}(\chi,\psi)=\sum_{x\in\mathbb{F}\setminus\{0,1\}}\chi(x(1-x)^{-1})$. The map $\mathbb{F}\setminus\{0,1\}\to \mathbb{F}\setminus\{0,-1\}$ $(x\mapsto x(1-x)^{-1})$ is a bijection. Therefore $J'_{\mathbb{F}}(\chi,\psi)=\sum_{y\in\mathbb{F}\setminus\{0,-1\}}\chi(y)=-\chi(-1)$ by Lemma \ref{lem:nontrivial-multiplicative-character-sum-zero}. By \eqref{eq:J-Jprime-relation}, we get $J_{\mathbb{F}}(\chi,\psi)=\chi(-1)(-\chi(-1))=-1$. It proves \eqref{eq:jacobi-special-inverse-Jprime} and \eqref{eq:jacobi-special-inverse-J}.
\end{proof}

\begin{lem}\label{lem:jacobi-norm-comparison}
Let $q=p^n$, $r\in \mathbb{Z}_{\ge 2}$, and $\chi,\psi\in\widehat{\mathbb{F}_q^\times}$. Put $\widetilde{\chi}:=\chi\circ\Norm_{\mathbb{F}_{q^r}/\mathbb{F}_q}$ and $\widetilde{\psi}:=\psi\circ\Norm_{\mathbb{F}_{q^r}/\mathbb{F}_q}$. Then the following hold.
\begin{equation}\label{eq:jacobi-norm-comparison-trivial}
J_{\mathbb{F}_{q^r}}(\widetilde{\chi},\widetilde{\psi})=q^r-2,
\qquad
J_{\mathbb{F}_q}(\chi,\psi)^r=(q-2)^r
\quad\textup{if }(\chi,\psi)=(\one,\one).
\end{equation}
\begin{equation}\label{eq:jacobi-norm-comparison-nontrivial}
J_{\mathbb{F}_{q^r}}(\widetilde{\chi},\widetilde{\psi})
=
(-1)^{r-1}J_{\mathbb{F}_q}(\chi,\psi)^r
\quad\textup{if }(\chi,\psi)\neq(\one,\one).
\end{equation}
\end{lem}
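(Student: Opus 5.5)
The plan is a case split according to which of $\chi$, $\psi$, $\chi\psi$ are trivial. The split matters because of how $\widetilde{\chi}=\chi\circ\Norm_{\mathbb{F}_{q^r}/\mathbb{F}_q}$ behaves. The norm $\mathbb{F}_{q^r}^\times\to\mathbb{F}_q^\times$ is surjective, so $\widetilde{\chi}=\one$ if and only if $\chi=\one$. Moreover $\widetilde{\chi}\widetilde{\psi}=\widetilde{\chi\psi}$, so the triviality pattern of $(\widetilde{\chi},\widetilde{\psi},\widetilde{\chi}\widetilde{\psi})$ is exactly that of $(\chi,\psi,\chi\psi)$. In the degenerate cases we can then read off both sides from Lemma \ref{lem:jacobi-special-values}, applied once over $\mathbb{F}_{q^r}$ and once over $\mathbb{F}_q$. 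In the generic case we will use the Gauss sum factorization together with Davenport--Hasse, Fact \ref{fac:gauss-jacobi}(3).

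\emph{Trivial cases.} If $(\chi,\psi)=(\one,\one)$, Lemma \ref{lem:jacobi-special-values}(1) gives $J_{\mathbb{F}_{q^r}}(\one,\one)=q^r-2$ and $J_{\mathbb{F}_q}(\one,\one)^r=(q-2)^r$, which is \eqref{eq:jacobi-norm-comparison-trivial}. If $\chi=\one\ne\psi$, part (2) gives $-1$ on the left and $(-1)^{r-1}(-1)^r=-1$ on the right. The case $\chi\ne\one$, $\psi\ne\one$, $\chi\psi=\one$ is handled the same way by part (4).

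If $\chi\ne\one=\psi$, part (3) gives $J_{\mathbb{F}_{q^r}}(\widetilde{\chi},\one)=-\widetilde{\chi}(-1)$. Here $\Norm(-1)=(-1)^k$, so $\widetilde{\chi}(-1)=\chi(-1)^k$. On the right we get $(-1)^{r-1}(-\chi(-1))^r=-\chi(-1)^r$. Now $k=1+q+\dots+q^{r-1}\equiv r \pmod 2$ when $q$ is odd, and $\chi(-1)^k=\chi(-1)^r$ since $\chi(-1)=\pm1$. When $q$ is even, $-1=1$ and both sides equal $-1$. So this case also checks out.

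\emph{Generic case: $\chi,\psi,\chi\psi\ne\one$.} By Fact \ref{fac:gauss-jacobi}(1) over both fields,
\[
J'_{\mathbb{F}_{q^r}}(\widetilde{\chi},\widetilde{\psi})=\frac{G_{\mathbb{F}_{q^r}}(\widetilde{\chi})G_{\mathbb{F}_{q^r}}(\widetilde{\psi})}{G_{\mathbb{F}_{q^r}}(\widetilde{\chi\psi})}.
\]
Applying Fact \ref{fac:gauss-jacobi}(3) to each of the three Gauss sums, the signs combine to $(-1)^{2(r-1)-(r-1)}=(-1)^{r-1}$. Hence $J'_{\mathbb{F}_{q^r}}(\widetilde{\chi},\widetilde{\psi})=(-1)^{r-1}J'_{\mathbb{F}_q}(\chi,\psi)^r$. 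To pass from $J'$ to $J$ we use \eqref{eq:J-Jprime-relation} on both sides: the left acquires the factor $\widetilde{\chi}(-1)=\chi(-1)^k$ and the right the factor $\chi(-1)^r$. These agree by the parity remark above.

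The only real subtlety, and the step where I expect care is needed, is this sign bookkeeping $\widetilde{\chi}(-1)=\chi(-1)^r$. It relies on $k\equiv r\pmod 2$ for odd $q$, with the even-$q$ case being trivial. Everything else is direct substitution.
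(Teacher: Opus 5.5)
Your proposal is correct and matches the paper's proof: the same case split via Lemma \ref{lem:jacobi-special-values} for the degenerate cases, and Fact \ref{fac:gauss-jacobi}(1),(3) followed by \eqref{eq:J-Jprime-relation} in the generic case. The differences are cosmetic. You state explicitly that $\widetilde{\chi}=\one\iff\chi=\one$ by surjectivity of the norm, which the paper uses tacitly. You also obtain $\widetilde{\chi}(-1)=\chi(-1)^r$ from $k\equiv r\pmod 2$, whereas the paper simply notes $\Norm(-1)=(-1)^r$ because $-1\in\mathbb{F}_q$.
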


\begin{proof}
First assume that $(\chi,\psi)=(\one,\one)$. Then $\widetilde{\chi}=\one$ and $\widetilde{\psi}=\one$. By \eqref{eq:jacobi-special-trivial-trivial}, we have
$J_{\mathbb{F}_{q^r}}(\widetilde{\chi},\widetilde{\psi})=q^r-2$
and
$J_{\mathbb{F}_q}(\chi,\psi)=q-2$.
Hence
$J_{\mathbb{F}_q}(\chi,\psi)^r=(q-2)^r$.
It proves \eqref{eq:jacobi-norm-comparison-trivial}. Next assume that $(\chi,\psi)\neq(\one,\one)$.

\noindent\textup{(i)}: If $\chi, \psi, \chi\psi\neq\one$, then Fact \ref{fac:gauss-jacobi} \textup{(1)} and \textup{(3)} give
$J'_{\mathbb{F}_{q^r}}(\widetilde{\chi},\widetilde{\psi})=(-1)^{r-1}J'_{\mathbb{F}_q}(\chi,\psi)^r$.
By \eqref{eq:J-Jprime-relation}, we have
$J_{\mathbb{F}_{q^r}}(\widetilde{\chi},\widetilde{\psi})=\widetilde{\chi}(-1)J'_{\mathbb{F}_{q^r}}(\widetilde{\chi},\widetilde{\psi})$.
Since $\widetilde{\chi}(-1)=\chi(\Norm_{\mathbb{F}_{q^r}/\mathbb{F}_q}(-1))=\chi((-1)^r)=\chi(-1)^r$, we obtain
$J_{\mathbb{F}_{q^r}}(\widetilde{\chi},\widetilde{\psi})=(-1)^{r-1}J_{\mathbb{F}_q}(\chi,\psi)^r$.

\noindent\textup{(ii)}: If $\chi=\one$ and $\psi\neq\one$, then \eqref{eq:jacobi-special-left-trivial} gives
$J_{\mathbb{F}_{q^r}}(\widetilde{\chi},\widetilde{\psi})=-1$
and
$J_{\mathbb{F}_q}(\chi,\psi)=-1$.
Thus
$J_{\mathbb{F}_{q^r}}(\widetilde{\chi},\widetilde{\psi})=(-1)^{r-1}J_{\mathbb{F}_q}(\chi,\psi)^r$.

\noindent\textup{(iii)}: If $\chi\neq\one$ and $\psi=\one$, then \eqref{eq:jacobi-special-right-trivial-J} gives
$J_{\mathbb{F}_{q^r}}(\widetilde{\chi},\widetilde{\psi})=-\widetilde{\chi}(-1)=-\chi((-1)^r)$
and
$J_{\mathbb{F}_q}(\chi,\psi)=-\chi(-1)$.
Hence
$J_{\mathbb{F}_{q^r}}(\widetilde{\chi},\widetilde{\psi})=(-1)^{r-1}J_{\mathbb{F}_q}(\chi,\psi)^r$.

\noindent\textup{(iv)}: If $\chi\neq\one$, $\psi\neq\one$, and $\chi\psi=\one$, then \eqref{eq:jacobi-special-inverse-J} gives
$J_{\mathbb{F}_{q^r}}(\widetilde{\chi},\widetilde{\psi})=-1$
and
$J_{\mathbb{F}_q}(\chi,\psi)=-1$.
Thus
$J_{\mathbb{F}_{q^r}}(\widetilde{\chi},\widetilde{\psi})=(-1)^{r-1}J_{\mathbb{F}_q}(\chi,\psi)^r$.
It proves \eqref{eq:jacobi-norm-comparison-nontrivial}.
\end{proof}

\begin{lem}\label{lem:weighted-jacobi-norm-comparison}
Let $q=p^n$, $r\in \mathbb{Z}_{\ge 2}$, and $u, v\in\mathbb{F}_q^\times$. Put $\widetilde{\chi}:=\chi\circ\Norm_{\mathbb{F}_{q^r}/\mathbb{F}_q}$ and $\widetilde{\psi}:=\psi\circ\Norm_{\mathbb{F}_{q^r}/\mathbb{F}_q}$. Then
\begin{equation}\label{eq:weighted-jacobi-norm-comparison}
\sum_{\chi,\psi\in\widehat{\mathbb{F}_q^\times}}\chi(u^{-1})\psi(v^{-1})J_{\mathbb{F}_{q^r}}(\widetilde{\chi},\widetilde{\psi})=q^r-2+(-1)^{r-1}\left(\sum_{\chi,\psi\in\widehat{\mathbb{F}_q^\times}}\chi(u^{-1})\psi(v^{-1})J_{\mathbb{F}_q}(\chi,\psi)^r-(q-2)^r\right).
\end{equation}
\end{lem}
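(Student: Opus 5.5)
We split the double sum on the left-hand side of \eqref{eq:weighted-jacobi-norm-comparison} into the single term $(\chi,\psi)=(\one,\one)$ and the remaining terms, and apply Lemma \ref{lem:jacobi-norm-comparison} to each part.

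For $(\chi,\psi)=(\one,\one)$ the weight $\chi(u^{-1})\psi(v^{-1})$ equals $1$, because $u,v\in\mathbb{F}_q^\times$ and so the characters are evaluated at nonzero elements. By \eqref{eq:jacobi-norm-comparison-trivial}, this term contributes $q^r-2$.

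For every other pair, \eqref{eq:jacobi-norm-comparison-nontrivial} replaces $J_{\mathbb{F}_{q^r}}(\widetilde{\chi},\widetilde{\psi})$ by $(-1)^{r-1}J_{\mathbb{F}_q}(\chi,\psi)^r$. The sum over the nontrivial pairs therefore equals
\[
(-1)^{r-1}\sum_{(\chi,\psi)\neq(\one,\one)}\chi(u^{-1})\psi(v^{-1})J_{\mathbb{F}_q}(\chi,\psi)^r .
\]

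To write this as a sum over all pairs, we add back and subtract the $(\one,\one)$ term. Its weight is again $1$, and by \eqref{eq:jacobi-norm-comparison-trivial} its value is $J_{\mathbb{F}_q}(\one,\one)^r=(q-2)^r$. So the sum over nontrivial pairs equals
\[
(-1)^{r-1}\left(\sum_{\chi,\psi\in\widehat{\mathbb{F}_q^\times}}\chi(u^{-1})\psi(v^{-1})J_{\mathbb{F}_q}(\chi,\psi)^r-(q-2)^r\right).
\]
Adding $q^r-2$ to this gives exactly the right-hand side of \eqref{eq:weighted-jacobi-norm-comparison}.

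The argument is pure bookkeeping, since all the substantive input is already in Lemma \ref{lem:jacobi-norm-comparison}. The one point to check is that the weights equal $1$ at the trivial pair, which holds because $u,v\neq0$. The indexing also needs no adjustment: the correspondence $\chi\mapsto\widetilde{\chi}$ is used only term by term, so its injectivity (surjectivity of the norm) is never needed.
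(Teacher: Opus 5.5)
Your proof is correct and is the paper's argument: isolate the $(\one,\one)$ term, which has weight $1$ and value $q^r-2$ by \eqref{eq:jacobi-norm-comparison-trivial}, and convert all other terms using \eqref{eq:jacobi-norm-comparison-nontrivial}. The only difference is that you write out the add-and-subtract of the $(q-2)^r$ term, which the paper leaves implicit.
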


\begin{proof}
By \eqref{eq:jacobi-norm-comparison-nontrivial}, we have
\[
\sum_{(\chi,\psi)\neq(\one,\one)}
\chi(u^{-1})\psi(v^{-1})J_{\mathbb{F}_{q^r}}(\widetilde{\chi},\widetilde{\psi})
=
(-1)^{r-1}
\sum_{(\chi,\psi)\neq(\one,\one)}
\chi(u^{-1})\psi(v^{-1})J_{\mathbb{F}_q}(\chi,\psi)^r.
\]
If $(\chi,\psi)=(\one,\one)$, then $\chi(u^{-1})\psi(v^{-1})=1$, and this case is exactly described by \eqref{eq:jacobi-norm-comparison-trivial}. Hence \eqref{eq:weighted-jacobi-norm-comparison} follows.
\end{proof}

Let $\bar{\mathbb{F}}$ denote the algebraic closure of $\mathbb{F}$.
Let $f(X)=a_mX^m+a_{m-1}X^{m-1}+\cdots+a_0$ and
$g(X)=b_nX^n+b_{n-1}X^{n-1}+\cdots+b_0$.
Denote $a_t=0$ for $t\notin\{0,1,\dotsc,m\}$ and $b_t=0$ for $t\notin\{0,1,\dotsc,n\}$.
Define the Sylvester matrix $S(f,g)$ by
$S(f,g):=\begin{psmallmatrix} A_{11} & A_{12}\\ A_{21} & A_{22}\end{psmallmatrix}$.
Here $A_{11}:=(a_{m+i-j})_{1\le i,j\le n}$, $A_{12}:=(a_{m-n+i-j})_{1\le i\le n,\ 1\le j\le m}$, $A_{21}:=(b_{n+i-j})_{1\le i\le m,\ 1\le j\le n}$, and $A_{22}:=(b_{i-j})_{1\le i,j\le m}$.
Also define $\Res(f,g):=\det S(f,g)$.

\begin{fac}[\cite{Lang2002}, Chapter~IV, \S 8, Corollary~8.4]\label{fac:resultant-common-root}
Let $f(X), g(X)\in \mathbb{F}_q[X]$ be nonzero polynomials. Then $\Res(f,g)\neq 0$ if and only if $f$ and $g$ have no common root in the algebraic closure $\bar{\mathbb{F}}_q$.
\end{fac}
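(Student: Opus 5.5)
Throughout, I take $m=\deg f$ and $n=\deg g$, so that $a_m\neq 0$ and $b_n\neq 0$; this is how the Sylvester matrix is meant to be read. The plan is to realize $S(f,g)$, up to transposition and a reordering of rows and columns, as the matrix of a linear map, and then to identify the kernel of that map with common factors of $f$ and $g$. Let $P_d$ denote the $\mathbb{F}_q$-space of polynomials of degree $<d$, which has dimension $d$. Consider
\[
\Phi:P_n\times P_m\to P_{m+n},\qquad (u,v)\mapsto uf+vg .
\]
Both sides have dimension $m+n$. With respect to the monomial bases, the columns coming from $X^jf$ ($0\le j<n$) and from $X^jg$ ($0\le j<m$) are shifted copies of the coefficient vectors of $f$ and $g$. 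These are exactly the blocks $A_{11},A_{12}$ and $A_{21},A_{22}$, read with rows and columns interchanged and with the monomial order reversed. Since transposition and permutations only change the determinant by a sign, $\Res(f,g)\neq 0$ if and only if $\Phi$ is injective.

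\emph{No common root $\Rightarrow$ $\Res\neq0$.} Suppose $\Phi(u,v)=0$ with $(u,v)\neq(0,0)$, so $uf=-vg$. If $v=0$ then $uf=0$, which forces $u=0$ since $f\neq 0$; hence $v\neq 0$. Now $f$ divides $vg$. If $\gcd(f,g)=1$, then $f$ divides $v$, which is impossible because $0\le\deg v<m=\deg f$. So $\gcd(f,g)$ is nonconstant, and any root of it in $\bar{\mathbb{F}}_q$ is a common root of $f$ and $g$.

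\emph{Common root $\Rightarrow$ $\Res=0$.} Let $\alpha\in\bar{\mathbb{F}}_q$ be a common root. Work over $\bar{\mathbb{F}}_q$; this is harmless, because the determinant of a matrix with entries in $\mathbb{F}_q$ does not depend on the field over which we view it. Write $f=(X-\alpha)f_1$ and $g=(X-\alpha)g_1$. Then $(u,v)=(g_1,-f_1)$ is nonzero, satisfies $\deg g_1<n$ and $\deg f_1<m$, and gives $uf+vg=0$. Hence $\Phi$ is not injective and $\Res(f,g)=0$. Both directions can also be phrased purely over $\mathbb{F}_q$, by taking $\gcd(f,g)$ in place of $X-\alpha$.

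The main obstacle is bookkeeping: checking that the specific index conventions in the paper's $A_{ij}$ blocks really are a transposed and permuted form of the matrix of $\Phi$, and handling the degenerate cases. If $m=0$ and $n=0$, the matrix is empty and its determinant is $1$, consistent with two nonzero constants having no common root. If exactly one degree is $0$, say $m=0$, the matrix is a triangular block of $a_0$'s with nonzero determinant, again consistent with the statement. The statement fails if $m,n$ are only upper bounds and both leading coefficients vanish, so the hypothesis $a_m\neq0$, $b_n\neq0$ must be made explicit.
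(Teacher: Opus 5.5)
Your proof is correct, but it takes a different route from the one the paper relies on. The paper gives no proof of this Fact and simply cites Lang. There the result is derived, via generic polynomials, from the product formula $\Res(f,g)=a_m^{n}b_n^{m}\prod_{i=1}^{m}\prod_{j=1}^{n}(\alpha_i-\beta_j)$, where $\alpha_i,\beta_j$ are the roots of $f,g$ in $\bar{\mathbb{F}}_q$; once $a_mb_n\neq0$, the equivalence is immediate from this formula.

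You instead identify $S(f,g)$ with the matrix of the Sylvester map $(u,v)\mapsto uf+vg$ and analyse its kernel by unique factorization. The bookkeeping you flag does go through. Merging the blocks, row $i\le n$ of $S(f,g)$ has entry $a_{m+i-J}$ in column $J$, so it is the coefficient vector of $X^{n-i}f$ read from degree $m+n-1$ downward. Likewise, row $n+i$ has entry $b_{n+i-J}$ and is the coefficient vector of $X^{m-i}g$. Hence $S(f,g)^{T}$ is the matrix of your $\Phi$ in reversed monomial bases.

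Your approach is elementary and self-contained. It also proves the intrinsic statement that $\Res(f,g)=0$ if and only if $\gcd(f,g)$ is nonconstant in $\mathbb{F}_q[X]$, which is exactly the form in which the paper invokes the Fact in Proposition~\ref{lem:T-layer-r3-general}. Lang's route needs the generic-polynomial machinery, but it yields an explicit factorization of the resultant, which your kernel argument does not give.

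Your insistence on $a_mb_n\neq0$ is also right: the paper's hypothesis ``nonzero polynomials'' does not suffice if $m,n$ are only upper bounds. The condition does hold in the paper's application, where $A$ and $B$ are monic cubics.
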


\begin{fac}[c.f.\cite{Lang2002}, Chapter~VI, \S 6, Theorem~6.4\textup{(ii)}]\label{fac:artin-schreier-quadratic}
Let $\mathbb{K}$ be a finite field of characteristic $2$, and $c\in\mathbb{K}$. Then $X^2+X+c$ is irreducible over $\mathbb{K}$ if and only if $\Tr_{\mathbb{K}/\mathbb{F}_2}(c)=1$.
\end{fac}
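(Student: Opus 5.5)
The plan is to reduce irreducibility to the existence of a root in $\mathbb{K}$, and then to identify exactly which $c$ admit a root by comparing two index-$2$ subgroups of $(\mathbb{K},+)$. Since $X^2+X+c$ has degree $2$, it is irreducible over $\mathbb{K}$ if and only if it has no root in $\mathbb{K}$. In characteristic $2$ we have $-c=c$, so a root $x\in\mathbb{K}$ is the same as a solution of $x^2+x=c$. The claim is therefore equivalent to
\[
\{x^2+x : x\in\mathbb{K}\}=\Ker\bigl(\Tr_{\mathbb{K}/\mathbb{F}_2}\bigr),
\]
because $\Tr_{\mathbb{K}/\mathbb{F}_2}$ takes values in $\mathbb{F}_2=\{0,1\}$.

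First consider $\phi:\mathbb{K}\to\mathbb{K}$, $x\mapsto x^2+x$. In characteristic $2$, Frobenius is additive, so $\phi$ is $\mathbb{F}_2$-linear. Its kernel is $\{x: x(x+1)=0\}=\{0,1\}$, so its image has $|\mathbb{K}|/2$ elements, i.e.\ index $2$ in $\mathbb{K}$.

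Next, the image lies in the kernel of the trace. Put $d=[\mathbb{K}:\mathbb{F}_2]$ and write $\sigma(x)=x^2$. The trace is $\sigma$-invariant, since applying $\sigma$ just cyclically permutes the conjugates. Hence $\Tr(x^2)=\Tr(x)$, and so $\Tr(x^2+x)=2\Tr(x)=0$. Moreover the trace is surjective onto $\mathbb{F}_2$: as a function it is the polynomial $x+x^2+\cdots+x^{2^{d-1}}$, of degree $2^{d-1}<|\mathbb{K}|$, so it cannot vanish on all of $\mathbb{K}$. Thus $\Ker(\Tr)$ is also an $\mathbb{F}_2$-subspace of index $2$.

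Since $\phi(\mathbb{K})\subseteq\Ker(\Tr)$ and both have size $|\mathbb{K}|/2$, they are equal. Therefore $X^2+X+c$ has a root in $\mathbb{K}$ if and only if $\Tr_{\mathbb{K}/\mathbb{F}_2}(c)=0$. Equivalently, it is irreducible if and only if $\Tr_{\mathbb{K}/\mathbb{F}_2}(c)=1$.

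The only point needing any care is the surjectivity of the trace, which is handled by the degree count above. Everything else is routine linear algebra over $\mathbb{F}_2$.
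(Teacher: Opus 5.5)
Your proof is correct and complete. The paper gives no argument for this fact; it only cites Lang's Artin--Schreier theorem. That theorem by itself yields the dichotomy that $X^p-X-a$ either has a root in the base field or is irreducible. To get the trace criterion from that source one also needs the additive form of Hilbert's Theorem 90. It identifies $\Ker(\Tr)$ with $\{\alpha-\sigma(\alpha)\}$ for a cyclic extension with generator $\sigma$; here $\alpha-\sigma(\alpha)=\alpha+\alpha^2$.

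You bypass both ingredients. The dichotomy is trivial in degree $2$. In place of Hilbert 90 you use a counting argument that works because $\mathbb{K}$ is finite:
\begin{itemize}
\item $x\mapsto x^2+x$ has kernel $\mathbb{F}_2$, so its image has index $2$;
\item the trace is a nonzero functional by the degree bound $2^{d-1}<2^d$, so its kernel has index $2$;
\item the inclusion coming from $\Tr(x^2+x)=0$ then forces equality.
\end{itemize}

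Your route gives a short, self-contained proof using only linear algebra over $\mathbb{F}_2$, which is all Proposition~\ref{prop:r2-char2-general} needs. The cited route is more general: additive Hilbert 90 holds for arbitrary cyclic extensions, not just finite ones, and Artin--Schreier theory handles $X^p-X-a$ in any characteristic $p$.

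Your counting argument extends verbatim to show that $X^p-X-c$ has a root in $\mathbb{F}_{p^d}$ if and only if $\Tr_{\mathbb{F}_{p^d}/\mathbb{F}_p}(c)=0$. For $p>2$, however, concluding irreducibility in the rootless case still requires the Artin--Schreier observation that the roots differ by elements of $\mathbb{F}_p$.
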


\begin{lem}\label{lem:minpoly-norm}
Assume that $x\in\bar{\mathbb{F}}_q$ has degree $d$ over $\mathbb{F}_q$. Let 
$m_x(X)=X^d+c_{d-1}X^{d-1}+\cdots+c_{1}X+c_{0}$ be the minimal polynomial of $x$ over $\mathbb{F}_q$.
Then $c_0=(-1)^d\Norm_{\mathbb{F}_q(x)/\mathbb{F}_q}(x)$ and $m_x(-1)=(-1)^d\Norm_{\mathbb{F}_q(x)/\mathbb{F}_q}(x+1)$.
\end{lem}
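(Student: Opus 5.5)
The plan is to prove both identities by factoring the minimal polynomial over $\bar{\mathbb{F}}_q$ and identifying its roots with the Galois conjugates of $x$. Since $x$ has degree $d$ over $\mathbb{F}_q$, we have $\mathbb{F}_q(x)=\mathbb{F}_{q^d}$ and $\Gal(\mathbb{F}_q(x)/\mathbb{F}_q)=\langle\sigma\rangle$ with $\sigma=\sigma_{\mathbb{F}_q(x)/\mathbb{F}_q}$ of order $d$.

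\textbf{Roots of $m_x$.} Each $\sigma^{\mu}(x)=x^{q^{\mu}}$ is a root of $m_x$, because $m_x$ has coefficients in $\mathbb{F}_q$ and $\sigma$ fixes $\mathbb{F}_q$. The elements $x,\sigma(x),\dots,\sigma^{d-1}(x)$ are pairwise distinct. Indeed, if $\sigma^{\mu}(x)=\sigma^{\nu}(x)$ with $0\le \mu<\nu\le d-1$, then $\sigma^{\nu-\mu}$ fixes $x$, hence fixes $\mathbb{F}_q(x)$. This contradicts $\sigma$ having order $d$. Since $m_x$ is monic of degree $d$, we obtain
\[
m_x(X)=\prod_{\mu=0}^{d-1}\bigl(X-\sigma^{\mu}(x)\bigr).
\]

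\textbf{The constant term.} Setting $X=0$ gives
\[
c_0=m_x(0)=(-1)^d\prod_{\mu}\sigma^{\mu}(x)=(-1)^d\Norm_{\mathbb{F}_q(x)/\mathbb{F}_q}(x).
\]

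\textbf{The value at $-1$.} Setting $X=-1$ gives
\[
m_x(-1)=\prod_{\mu}\bigl(-1-\sigma^{\mu}(x)\bigr)=(-1)^d\prod_{\mu}\bigl(\sigma^{\mu}(x)+1\bigr).
\]
Since $\sigma^{\mu}$ is a field automorphism fixing $1$, we have $\sigma^{\mu}(x)+1=\sigma^{\mu}(x+1)$. Therefore
\[
m_x(-1)=(-1)^d\Norm_{\mathbb{F}_q(x)/\mathbb{F}_q}(x+1).
\]

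The only step needing care is the distinctness of the conjugates, which the order argument above settles. The rest is direct evaluation.
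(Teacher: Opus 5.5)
Your proposal is correct and follows essentially the same route as the paper: factor $m_x(X)=\prod_{\mu=0}^{d-1}\bigl(X-x^{q^{\mu}}\bigr)$, then evaluate at $0$ and at $-1$ using $\sigma^{\mu}(x)+1=\sigma^{\mu}(x+1)$. The only difference is that you also justify why the $d$ conjugates are distinct, a step the paper takes for granted.
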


\begin{proof}
The $\mathbb{F}_q$-conjugates of $x$ are $x,x^q,\dots,x^{q^{d-1}}$, thus
$m_x(X)=\prod_{\nu=0}^{d-1}(X-x^{q^\nu})$.
Hence we calculate $c_0=m_x(0)=\prod_{\nu=0}^{d-1}(-x^{q^\nu})=(-1)^d\Norm_{\mathbb{F}_q(x)/\mathbb{F}_q}(x)$. Since the characteristic is $p$, we have $1+x^{q^\nu}=(1+x)^{q^\nu}$. Therefore,
$m_x(-1)=(-1)^d\Norm_{\mathbb{F}_q(x)/\mathbb{F}_q}(x+1)$.
\end{proof}

\begin{lem}\label{lem:kernel-of-norm0}
$C_0=\Ker(\Norm_{\mathbb{F}_{q^r}/\mathbb{F}_q})$.
\end{lem}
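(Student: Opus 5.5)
We must show that $C_0=\langle\omega^e\rangle$ with $e=q-1$ equals the kernel of the norm $\Norm_{\mathbb{F}_{q^r}/\mathbb{F}_q}$ restricted to $\mathbb{F}_{q^r}^{\times}$. The plan is to use the explicit formula $\Norm_{\mathbb{F}_{q^r}/\mathbb{F}_q}(x)=x^k$ with $k=\frac{q^r-1}{q-1}$, recorded earlier in the paper, and then compare the two subgroups of the cyclic group $\mathbb{F}_{q^r}^{\times}=\langle\omega\rangle$ by an inclusion together with a cardinality count.

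First, the inclusion $C_0\subseteq\Ker(\Norm_{\mathbb{F}_{q^r}/\mathbb{F}_q})$. Since the norm is multiplicative, its kernel is a subgroup, so it suffices to check the generator $\omega^e$. We compute
\[
\Norm_{\mathbb{F}_{q^r}/\mathbb{F}_q}(\omega^{e})=\omega^{ek}=\omega^{q^r-1}=1,
\]
which gives the inclusion.

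Next, the cardinalities. The element $\omega$ has order $q^r-1=ek$, so $\omega^e$ has order $k$, and hence $|C_0|=k$. The restriction of the norm to $\mathbb{F}_{q^r}^{\times}$ is a surjective homomorphism onto $\mathbb{F}_q^{\times}$, as stated before Lemma \ref{lem:jacobi-norm-comparison}. Its kernel therefore has order $\frac{q^r-1}{q-1}=k$. Two finite sets of the same size, one contained in the other, are equal, so $C_0=\Ker(\Norm_{\mathbb{F}_{q^r}/\mathbb{F}_q})$.

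The statement implicitly concerns $\mathbb{F}_{q^r}^{\times}$, since $0\notin C_0$ and $\Norm(0)=0\neq 1$. The only step needing any justification is the surjectivity of the norm, which the paper has already asserted. Even without it, the kernel of $x\mapsto x^k$ in a cyclic group of order $ek$ consists of exactly the elements whose order divides $k$. That subgroup has order $\gcd(k,ek)=k$, so the count goes through independently. No real obstacle is expected.
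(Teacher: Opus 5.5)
Your proof is correct and essentially matches the paper's: both get $|C_0|=k$ and $|\Ker(\Norm_{\mathbb{F}_{q^r}/\mathbb{F}_q})|=k$, the latter from surjectivity of the norm onto $\mathbb{F}_q^{\times}$. The only difference is the last step: the paper concludes from the uniqueness of the subgroup of order $k$ in the cyclic group $\mathbb{F}_{q^r}^{\times}$, while you check the inclusion $C_0\subseteq\Ker(\Norm_{\mathbb{F}_{q^r}/\mathbb{F}_q})$ directly via $\omega^{ek}=1$, which is equally valid.
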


\begin{proof}
The restriction
$\Norm_{\mathbb{F}_{q^r}/\mathbb{F}_q}:\mathbb{F}_{q^r}^{\times}\to\mathbb{F}_q^{\times}$
is a surjective group homomorphism. Hence the homomorphism theorem yields $|\Ker(\Norm_{\mathbb{F}_{q^r}/\mathbb{F}_q})|=\frac{q^{r}-1}{q-1}=k$. On the other hand, $|\langle \omega^e\rangle|=\frac{q^{r}-1}{e}=k$. Since $\mathbb{F}_{q^r}^\times$ is cyclic, a subgroup of order $k$ is unique. Therefore $C_0=\Ker(\Norm_{\mathbb{F}_{q^r}/\mathbb{F}_q})$.
\end{proof}

\begin{lem}\label{lem:kernel-general-coset}
For $0\le a\le q-2$, we obtain
$C_a=\{x\in \mathbb{F}_{q^r}^\times\mid \Norm_{\mathbb{F}_{q^r}/\mathbb{F}_q}(x)=\omega^{ak}\}$.
\end{lem}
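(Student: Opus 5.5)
The plan is to reduce to Lemma \ref{lem:kernel-of-norm0} by translating the coset $C_a=\omega^a C_0$ through the norm homomorphism. Write $\Norm:=\Norm_{\mathbb{F}_{q^r}/\mathbb{F}_q}$. Since $\Norm(x)=x^k$ for $x\in\mathbb{F}_{q^r}$, we get $\Norm(\omega^a)=\omega^{ak}$. By definition, $C_a=\omega^a\langle\omega^e\rangle$, and Lemma \ref{lem:kernel-of-norm0} identifies $\langle\omega^e\rangle=C_0$ with $\Ker(\Norm)$. Hence $C_a=\omega^a\Ker(\Norm)$.

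The first inclusion is immediate from multiplicativity. If $x=\omega^a y$ with $\Norm(y)=1$, then $\Norm(x)=\Norm(\omega^a)\Norm(y)=\omega^{ak}$.

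For the reverse inclusion, take $x\in\mathbb{F}_{q^r}^\times$ with $\Norm(x)=\omega^{ak}$. Put $y:=\omega^{-a}x$. Then $\Norm(y)=\omega^{-ak}\omega^{ak}=1$, so $y\in\Ker(\Norm)=C_0$. Therefore $x=\omega^a y\in\omega^a C_0=C_a$.

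In other words, the right-hand side is the fibre of the homomorphism $\Norm$ over $\omega^{ak}$, which is the coset of the kernel containing $\omega^a$. There is no real obstacle here; the only point to check is that $\omega^{ak}$ really is the norm of $\omega^a$. This holds because $\Norm(x)=\prod_{\mu=0}^{r-1}x^{q^\mu}=x^{1+q+\cdots+q^{r-1}}=x^k$, as already noted in the paper.
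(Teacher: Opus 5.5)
Your proof is correct and follows essentially the same route as the paper: you note $\Norm(\omega^a)=\omega^{ak}$, get $(\subset)$ by writing $x=\omega^a z$ with $z\in C_0=\Ker(\Norm)$, and get $(\supset)$ by observing $\omega^{-a}x\in\Ker(\Norm)=C_0$, both via Lemma \ref{lem:kernel-of-norm0}. Your closing remark that the right-hand side is the fibre of $\Norm$ over $\omega^{ak}$, i.e.\ the coset $\omega^a\Ker(\Norm)$, is just a compact restatement of the same argument.
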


\begin{proof}
Note that $\Norm_{\mathbb{F}_{q^r}/\mathbb{F}_q}(\omega^a)=\omega^{ak}$.

\noindent$(\subset)$: Let $x\in C_a$. By the definition, we can denote $x=\omega^a z$ with $z\in C_0$. By Lemma \ref{lem:kernel-of-norm0}, we have $\Norm_{\mathbb{F}_{q^r}/\mathbb{F}_q}(z)=1$, and thus $\Norm_{\mathbb{F}_{q^r}/\mathbb{F}_q}(x)=\omega^{ak}$.

\noindent$(\supset)$: Take $x\in \mathbb{F}_{q^r}^\times$ so that $\Norm_{\mathbb{F}_{q^r}/\mathbb{F}_q}(x)=\omega^{ak}$. Then $\Norm_{\mathbb{F}_{q^r}/\mathbb{F}_q}(\omega^{-a}x)=1$. Hence, by Lemma \ref{lem:kernel-of-norm0}, we obtain $\omega^{-a}x\in C_0$. Therefore $x\in \omega^a C_0=C_a$.
\end{proof}

\begin{lem}\label{lem:q2-general-order1}
Assume that $q=2$. For every $r\in \mathbb{Z}_{\ge 2}$, we have $(0,0)_1=2^r-2$. Moreover, $(0,0)_1=\left\lceil \frac{k}{2}\right\rceil$ if $r=2$, and $(0,0)_1>\left\lceil \frac{k}{2}\right\rceil$ if $r\ge 3$.
\end{lem}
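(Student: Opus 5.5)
When $q=2$ we have $e=q-1=1$ and $k=2^r-1$. The only coset is $C_0=\omega^0\langle\omega\rangle=\mathbb{F}_{2^r}^\times$. This also agrees with Lemma \ref{lem:kernel-of-norm0}, because the norm to $\mathbb{F}_2^\times=\{1\}$ is trivial. By definition, $(0,0)_1$ counts the $x\in\mathbb{F}_{2^r}^\times$ with $x+1\in\mathbb{F}_{2^r}^\times$. The only nonzero element excluded is $x=1$, since $1+1=0$ in characteristic $2$. Hence
\[
(0,0)_1=\#\bigl(\mathbb{F}_{2^r}\setminus\{0,1\}\bigr)=2^r-2 .
\]
This is the same count as $J_{\mathbb{F}}(\one,\one)$ in \eqref{eq:jacobi-special-trivial-trivial}.

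Next we compare this with $\lceil k/2\rceil$. Since $q=2$ is even, \eqref{eq:ceil-half-k} gives
\[
\left\lceil \frac{k}{2}\right\rceil=\frac{2^r+2-2}{2}=2^{r-1}.
\]
Directly: $k=2^r-1$ is odd, so $\lceil k/2\rceil=(k+1)/2=2^{r-1}$.

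We must therefore compare $2^r-2$ with $2^{r-1}$. Their difference is
\[
(2^r-2)-2^{r-1}=2^{r-1}-2=2\bigl(2^{r-2}-1\bigr).
\]
This vanishes exactly when $r=2$, giving $2=2$. For $r\ge3$ it is strictly positive, since $2^{r-2}\ge 2$. Both claims follow.

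There is no real obstacle here. The only points to watch are the characteristic-$2$ identity $1=-1$, which excludes exactly one element, and using the correct branch of \eqref{eq:ceil-half-k}.
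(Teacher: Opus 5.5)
Your proof is correct and follows essentially the same route as the paper. Both identify the counted set as $\mathbb{F}_{2^r}^\times\setminus\{1\}$ and then compare $2^r-2$ with $\lceil k/2\rceil=2^{r-1}$. The only difference is how the set is identified: you use $C_0=\langle\omega\rangle=\mathbb{F}_{2^r}^\times$ directly, whereas the paper uses the norm description of Lemma \ref{lem:kernel-general-coset}, whose norm conditions are trivially satisfied here because $\mathbb{F}_2^\times=\{1\}$.
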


\begin{proof}
Since $\mathbb{F}_2^\times=\{1\}$, we have $a=b=0$. Denote $S:=\{x\in \mathbb{F}_{2^r}^\times\mid \Norm_{\mathbb{F}_{2^r}/\mathbb{F}_2}(x)=1,\ \Norm_{\mathbb{F}_{2^r}/\mathbb{F}_2}(x+1)=1\}$. By Lemma \ref{lem:kernel-general-coset}, we have $(0,0)_1=\#S$.

Now, since $\mathbb{F}_2^\times=\{1\}$, we have $\Norm_{\mathbb{F}_{2^r}/\mathbb{F}_2}(x)=1$ for every $x\in \mathbb{F}_{2^r}^\times$. Also, we have $\Norm_{\mathbb{F}_{2^r}/\mathbb{F}_2}(x+1)=1$ if and only if $x\neq 1$. Hence $S=\mathbb{F}_{2^r}^\times\setminus\{1\}$, and therefore $(0,0)_1=\#S=2^r-2$.
Since $k=2^r-1$, we have $\left\lceil \frac{k}{2}\right\rceil=2^{r-1}$. Thus $(0,0)_1=\left\lceil \frac{k}{2}\right\rceil$ if $r=2$, while $(0,0)_1>\left\lceil \frac{k}{2}\right\rceil$ if $r\ge 3$.
\end{proof}

\section{Estimates for $(a,b)_{q-1}\le \left\lceil \frac{k}{2}\right\rceil$}\label{sec:order-qminus1}

In this section, we prove the inequality $(a,b)_{q-1}\le \left\lceil \frac{k}{2}\right\rceil$. Throughout this section, we abbreviate $\Norm_{\mathbb{F}_{q^r}/\mathbb{F}_q}$ as $\Norm$.

\begin{prop}\label{prop:exact-character-formula-general}
Let $q=p^n$, $r\in \mathbb{Z}_{\ge 2}$, and $0\le a,b\le q-2$. Then
\begin{equation}\label{eq:exact-character-formula-general}
(a,b)_{q-1}=\frac{1}{(q-1)^2}
\sum_{\chi,\psi\in \widehat{\mathbb{F}_q^\times}}
\chi(\omega^{-ak})\psi(\omega^{-bk})
J_{\mathbb{F}_{q^r}}(\chi\circ \Norm,\psi\circ \Norm).
\end{equation}
\end{prop}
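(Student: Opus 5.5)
We will count via indicator functions of norm conditions and expand them with Fact \ref{fac:standard-facts}. By Lemma \ref{lem:kernel-general-coset}, $x\in C_a$ if and only if $x\neq 0$ and $\Norm(x)=\omega^{ak}$, and similarly $x+1\in C_b$ if and only if $x+1\neq 0$ and $\Norm(x+1)=\omega^{bk}$. Hence
\[
(a,b)_{q-1}=\sum_{x\in\mathbb{F}_{q^r}\setminus\{0,-1\}}\one_{\Norm(x)\omega^{-ak}=1}\,\one_{\Norm(x+1)\omega^{-bk}=1}.
\]

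Next, since $\Norm(x)\omega^{-ak}\in\mathbb{F}_q^\times$ for $x\neq0$, we apply Fact \ref{fac:standard-facts} in the field $\mathbb{F}_q$ to each indicator:
\[
\one_{\Norm(x)\omega^{-ak}=1}=\frac{1}{q-1}\sum_{\chi\in\widehat{\mathbb{F}_q^\times}}\chi(\omega^{-ak})\chi(\Norm(x)),
\]
and likewise for $x+1$ with $\psi$. This uses the multiplicativity of $\chi$ and $\omega^{ak}\in\mathbb{F}_q^\times$, the latter holding because $\omega^{ak}=\Norm(\omega^a)$.

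Multiplying the two expansions and interchanging the finite sums gives
\[
\frac{1}{(q-1)^2}\sum_{\chi,\psi}\chi(\omega^{-ak})\psi(\omega^{-bk})\sum_{x\neq0,-1}(\chi\circ\Norm)(x)\,(\psi\circ\Norm)(x+1).
\]
The only point needing care is the passage from the restricted range $x\neq 0,-1$ to all of $\mathbb{F}_{q^r}$. The convention $\chi(0)=0$ extends $\chi\circ\Norm$ to $\mathbb{F}_{q^r}$ with value $0$ at $0$, since $\Norm(0)=0$, and this holds even for $\chi=\one$. Thus the terms $x=0$ and $x=-1$ contribute nothing, and the inner sum is exactly $J_{\mathbb{F}_{q^r}}(\chi\circ\Norm,\psi\circ\Norm)$, which yields \eqref{eq:exact-character-formula-general}.
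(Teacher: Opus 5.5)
Your proof is correct and follows essentially the same route as the paper: identify $(a,b)_{q-1}$ as a count of norm conditions via Lemma \ref{lem:kernel-general-coset}, expand both indicators with Fact \ref{fac:standard-facts} over $\mathbb{F}_q$, and interchange the sums to obtain the Jacobi sum. The only cosmetic difference is that the paper applies the orthogonality identity directly for every $x\in\mathbb{F}_{q^r}$, where the convention $\chi(0)=0$ silently handles $x=0,-1$, whereas you restrict to $x\neq 0,-1$ first and then re-extend, which makes the role of that convention explicit.
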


\begin{proof}
By Fact \ref{fac:standard-facts}, for each $x\in \mathbb{F}_{q^r}$ we have $(q-1)^2\one_{\Norm(x)=\omega^{ak}}\one_{\Norm(x+1)=\omega^{bk}}=\sum_{\chi,\psi\in\widehat{\mathbb{F}_q^\times}}\chi(\Norm(x)\omega^{-ak})\psi(\Norm(x+1)\omega^{-bk})$. Using Lemma \ref{lem:kernel-general-coset}, we obtain the formula.
\end{proof}

\begin{prop}\label{prop:main-term-explicit-general}
For $0\le a,b\le q-2$, define
\[
  A_r(a):=
  \begin{cases}
    -(q-2), & \textup{if } r \textup{ is even and } a=0,\\
    -(q-2), & \textup{if } r \textup{ is odd},\ q \textup{ is even, and } a=0,\\
    -(q-2), & \textup{if } r \textup{ is odd},\ q \textup{ is odd, and } a=\frac{q-1}{2},\\
    1, & \textup{otherwise,}
  \end{cases}\]
  \[
  B(b):=
  \begin{cases}
    -(q-2), & \textup{if } b=0,\\
    1, & \textup{if } b\neq 0,
  \end{cases}\quad
  C(a,b):=
  \begin{cases}
    -(q-2), & \textup{if } a=b,\\
    1, & \textup{if } a\neq b.
  \end{cases}
\]
Then
$(a,b)_{q-1}=M_{a,b}+E_{a,b}$,
where
$M_{a,b}:=\frac{q^r-2+A_r(a)+B(b)+C(a,b)}{(q-1)^2}$
and
$|E_{a,b}|\le \frac{(q-2)(q-3)}{(q-1)^2}q^{r/2}$.
\end{prop}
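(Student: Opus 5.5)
The plan is to start from the exact formula \eqref{eq:exact-character-formula-general} of Proposition \ref{prop:exact-character-formula-general}. I will split the sum over pairs $(\chi,\psi)\in\widehat{\mathbb{F}_q^\times}^2$ into five classes:
\begin{enumerate}[label=\textup{(\roman*)},leftmargin=2.5em]
\item $\chi=\psi=\one$;
\item $\chi=\one$, $\psi\neq\one$;
\item $\chi\neq\one$, $\psi=\one$;
\item $\chi,\psi\neq\one$ with $\chi\psi=\one$;
\item the generic class $\chi,\psi,\chi\psi\neq\one$.
\end{enumerate}
Classes (i)--(iv) will produce the numerator of $M_{a,b}$ exactly, and class (v), divided by $(q-1)^2$, will be $E_{a,b}$.

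First I record that, because $\Norm:\mathbb{F}_{q^r}^\times\to\mathbb{F}_q^\times$ is surjective, $\widetilde{\chi}=\chi\circ\Norm$ is trivial iff $\chi$ is trivial, and $\widetilde{\chi}\widetilde{\psi}=\widetilde{\chi\psi}$. So the class of $(\widetilde\chi,\widetilde\psi)$ matches that of $(\chi,\psi)$, and Lemma \ref{lem:jacobi-special-values} can be applied over $\mathbb{F}_{q^r}$. I also use that $\omega^k$ generates $\mathbb{F}_q^\times$, since it is the norm of a generator. Hence, by Fact \ref{fac:standard-facts}, $\sum_{\chi\neq\one}\chi(\omega^{ck})=(q-1)\one_{\omega^{ck}=1}-1$.

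The four special classes are then computed as follows.
\begin{itemize}
\item Class (i) gives $q^r-2$ by \eqref{eq:jacobi-special-trivial-trivial}.
\item Class (ii): each Jacobi sum is $-1$ by \eqref{eq:jacobi-special-left-trivial}, so the contribution is $-\sum_{\psi\neq\one}\psi(\omega^{-bk})=1-(q-1)\one_{b=0}=B(b)$.
\item Class (iii): by \eqref{eq:jacobi-special-right-trivial-J} each term is $-\widetilde\chi(-1)=-\chi((-1)^r)$. The contribution is $-\sum_{\chi\neq\one}\chi(\omega^{-ak}(-1)^r)=1-(q-1)\one_{\omega^{ak}=(-1)^r}$. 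I then check that $\omega^{ak}=(-1)^r$ happens exactly in the cases where $A_r(a)=-(q-2)$. If $r$ is even, or $q$ is even so that $-1=1$, the condition is $a=0$. If $r$ and $q$ are both odd, the condition is $\omega^{ak}=-1$, i.e. $a=\frac{q-1}{2}$.
\item Class (iv): $\psi=\chi^{-1}$ and $J=-1$ by \eqref{eq:jacobi-special-inverse-J}. The contribution is $-\sum_{\chi\neq\one}\chi(\omega^{(b-a)k})=1-(q-1)\one_{a=b}=C(a,b)$.
\end{itemize}
Summing classes (i)--(iv) and dividing by $(q-1)^2$ gives $M_{a,b}$.

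For the generic class (v), Fact \ref{fac:gauss-jacobi}(1),(2) applied over $\mathbb{F}_{q^r}$, together with \eqref{eq:J-Jprime-relation}, gives $|J_{\mathbb{F}_{q^r}}(\widetilde\chi,\widetilde\psi)|=q^{r/2}$. Alternatively, this follows from \eqref{eq:jacobi-norm-comparison-nontrivial} and Lemma \ref{lem:jacobi-absolute-value}. The weights $\chi(\omega^{-ak})\psi(\omega^{-bk})$ have modulus $1$. The number of generic pairs is $(q-2)(q-3)$: there are $q-2$ choices of $\chi\neq\one$, and then $q-3$ choices of $\psi\notin\{\one,\chi^{-1}\}$. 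The triangle inequality therefore yields $|E_{a,b}|\le\frac{(q-2)(q-3)}{(q-1)^2}q^{r/2}$.

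The only delicate point is the case analysis in class (iii), namely matching $\one_{\omega^{ak}=(-1)^r}$ with the three exceptional cases in the definition of $A_r$. This needs care with characteristic $2$ and with the fact that $\omega^k$ is a generator, so that $-1=\omega^{k(q-1)/2}$ for odd $q$. The remaining steps are routine bookkeeping.
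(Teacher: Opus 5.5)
Your proposal is correct and follows the paper's proof essentially step for step. It uses the same five-way split of the sum in \eqref{eq:exact-character-formula-general}, the same evaluation of the four degenerate classes via Lemma \ref{lem:jacobi-special-values} and Fact \ref{fac:standard-facts} (giving $q^r-2$, $A_r(a)$, $B(b)$, $C(a,b)$), and the same triangle-inequality bound over the $(q-2)(q-3)$ generic pairs. The differences are cosmetic: you apply the special values directly over $\mathbb{F}_{q^r}$ after noting that $\chi\mapsto\chi\circ\Norm$ preserves triviality, where the paper goes through \eqref{eq:jacobi-norm-comparison-nontrivial}, and you state explicitly that $\omega^k$ generates $\mathbb{F}_q^\times$, which the paper uses tacitly.
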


\begin{proof}
By \eqref{eq:exact-character-formula-general},
$(a,b)_{q-1}=\frac{1}{(q-1)^2}\sum_{\chi,\psi\in\widehat{\mathbb{F}_q^\times}}\chi(\omega^{-ak})\psi(\omega^{-bk})J_{\mathbb{F}_{q^r}}(\chi\circ \Norm,\psi\circ \Norm)$.
We divide this sum into five cases according to whether each $\chi$, $\psi$, and $\chi\psi$ is equal to $\one$.

\begin{enumerate}
\item[\textup{(1)}:]
If $\chi=\one$ and $\psi=\one$, then \eqref{eq:jacobi-norm-comparison-trivial} gives
$J_{\mathbb{F}_{q^r}}(\chi\circ \Norm,\psi\circ \Norm)=q^r-2$.

\item[\textup{(2)}:]
If $\chi\neq \one$ and $\psi=\one$, then $\chi\psi=\chi\neq \one$. By \eqref{eq:jacobi-special-right-trivial-J} and \eqref{eq:jacobi-norm-comparison-nontrivial}, we have
$J_{\mathbb{F}_{q^r}}(\chi\circ \Norm,\psi\circ \Norm)=(-1)^{r-1}J_{\mathbb{F}_q}(\chi,\psi)^r=-\chi((-1)^r)$. Hence
$\sum_{\chi\neq\one}
\chi(\omega^{-ak})
J_{\mathbb{F}_{q^r}}(\chi\circ\Norm,\one)
=
-\sum_{\chi\neq\one}\chi(\omega^{-ak})\chi((-1)^r)$.

\textup{(i):} Assume that $r$ is even. Then $\chi((-1)^r)=\chi(1)=1$. Hence, by Fact \ref{fac:standard-facts},
\[
-\sum_{\chi\neq\one}\chi(\omega^{-ak})\chi((-1)^r)
=
-\sum_{\chi\neq\one}\chi(\omega^{-ak})
=
\begin{cases}
-(q-2), & \textup{if } a=0,\\
1, & \textup{if } a\neq0.
\end{cases}
\]

\textup{(ii):} Assume that $r$ is odd and $q$ is even. Then $\chi((-1)^r)=\chi(-1)=\chi(1)=1$. Hence, by Fact \ref{fac:standard-facts},
\[
-\sum_{\chi\neq\one}\chi(\omega^{-ak})\chi((-1)^r)
=
-\sum_{\chi\neq\one}\chi(\omega^{-ak})
=
\begin{cases}
-(q-2), & \textup{if } a=0,\\
1, & \textup{if } a\neq0.
\end{cases}
\]

\textup{(iii):} Assume that $r$ is odd and $q$ is odd, then $\omega^{\frac{q-1}{2}k}=-1$. Therefore, by Fact \ref{fac:standard-facts},
\[
-\sum_{\chi\neq\one}\chi(\omega^{-ak})\chi((-1)^r)
=
-\sum_{\chi\neq\one}\chi(\omega^{(\frac{q-1}{2}-a)k})
=
\begin{cases}
-(q-2), & \textup{if } a=\frac{q-1}{2},\\
1, & \textup{if } a\neq\frac{q-1}{2}.
\end{cases}
\]
Thus this part gives $A_r(a)$.

\item[\textup{(3)}:]
If $\chi=\one$ and $\psi\neq \one$, then $\chi\psi=\psi\neq \one$. By \eqref{eq:jacobi-special-left-trivial} and \eqref{eq:jacobi-norm-comparison-nontrivial}, we have
$J_{\mathbb{F}_{q^r}}(\chi\circ \Norm,\psi\circ \Norm)=(-1)^{r-1}J_{\mathbb{F}_q}(\chi,\psi)^r=-1$.
Thus, by Fact \ref{fac:standard-facts},
\[
-\sum_{\psi\neq \one}\psi(\omega^{-bk})
=
\begin{cases}
-(q-2), & \textup{if } b=0,\\
1, & \textup{if } b\neq0.
\end{cases}
\]
Hence this part gives $B(b)$.

\item[\textup{(4)}:]
If $\chi\neq \one$, $\psi\neq \one$, and $\chi\psi=\one$, then $\psi=\chi^{-1}$. By \eqref{eq:jacobi-special-inverse-J} and \eqref{eq:jacobi-norm-comparison-nontrivial}, we have
$J_{\mathbb{F}_{q^r}}(\chi\circ \Norm,\psi\circ \Norm)=(-1)^{r-1}J_{\mathbb{F}_q}(\chi,\psi)^r=-1$.
Thus, by Fact \ref{fac:standard-facts},
\[
-\sum_{\chi\neq \one}\chi(\omega^{(b-a)k})
=
\begin{cases}
-(q-2), & \textup{if } a=b,\\
1, & \textup{if } a\neq b.
\end{cases}
\]
Hence this part gives $C(a,b)$.

\item[\textup{(5)}:]
Consider the case $\chi,\psi,\chi\psi\neq \one$. Define
$E_{a,b}:=\frac{1}{(q-1)^2}\sum_{\substack{\chi,\psi\in\widehat{\mathbb{F}_q^\times}\\ \chi,\psi,\chi\psi\neq \one}}\chi(\omega^{-ak})\psi(\omega^{-bk})J_{\mathbb{F}_{q^r}}(\chi\circ \Norm,\psi\circ \Norm)$.
By \eqref{eq:jacobi-absolute-value} and \eqref{eq:jacobi-norm-comparison-nontrivial}, we have
$|J_{\mathbb{F}_{q^r}}(\chi\circ \Norm,\psi\circ \Norm)|=q^{r/2}$.
Since there are $(q-2)(q-3)$ such pairs $(\chi,\psi)$, the triangle inequality gives
$|E_{a,b}|\le \frac{(q-2)(q-3)}{(q-1)^2}q^{r/2}$.
\end{enumerate}
Combining the five cases, we obtain
$(a,b)_{q-1}=\frac{q^r-2+A_r(a)+B(b)+C(a,b)}{(q-1)^2}+E_{a,b}=M_{a,b}+E_{a,b}$.
\end{proof}

\begin{cor}\label{cor:q-three-exact-general}
Assume that $q=3$. Then
\[
(a,b)_2=
\begin{cases}
\frac{3^r+1}{4}, & \textup{if } r \textup{ is odd},\ a=0,\textup{ and } b=1,\\
\frac{3^r-3}{4}, & \textup{if } r \textup{ is odd and either } a\neq0 \textup{ or } b\neq1,\\
\frac{3^r-5}{4}, & \textup{if } r \textup{ is even},\ a=0,\textup{ and } b=0,\\
\frac{3^r-1}{4}, & \textup{if } r \textup{ is even and either } a\neq0 \textup{ or } b\neq0.
\end{cases}
\]
In particular, $(a,b)_2\le \left\lceil \frac{k}{2}\right\rceil$ for every $0\le a,b\le 1$.
\end{cor}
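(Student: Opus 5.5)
With $q=3$ there is no error term, so I will apply Proposition \ref{prop:main-term-explicit-general} directly and then compare with \eqref{eq:ceil-half-k}. Since $q-3=0$, the bound $|E_{a,b}|\le \frac{(q-2)(q-3)}{(q-1)^2}q^{r/2}$ gives $E_{a,b}=0$. Equivalently, no character pair $(\chi,\psi)$ of $\mathbb{F}_3^\times$ has $\chi,\psi,\chi\psi$ all nontrivial, because the only nontrivial character is quadratic. Hence $(a,b)_2=M_{a,b}=\frac{3^r-2+A_r(a)+B(b)+C(a,b)}{4}$, with $q-2=1$. Each of $A_r,B,C$ therefore takes the value $-1$ in its special case and $+1$ otherwise.

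Next I evaluate the three terms case by case for $a,b\in\{0,1\}$. Here $\frac{q-1}{2}=1$, and $q$ is odd.

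\textbf{$r$ odd.} We have $A_r(a)=-1$ iff $a=1$, $B(b)=-1$ iff $b=0$, and $C(a,b)=-1$ iff $a=b$. Running through the four pairs:
\begin{itemize}
\item $(0,0)$ gives $1-1-1=-1$;
\item $(0,1)$ gives $1+1+1=3$;
\item $(1,0)$ gives $-1-1+1=-1$;
\item $(1,1)$ gives $-1+1-1=-1$.
\end{itemize}
So $(a,b)_2=\frac{3^r+1}{4}$ exactly when $(a,b)=(0,1)$, and $\frac{3^r-3}{4}$ otherwise.

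\textbf{$r$ even.} Now $A_r(a)=-1$ iff $a=0$. Running through the pairs:
\begin{itemize}
\item $(0,0)$ gives $-3$;
\item $(0,1)$ gives $-1+1+1=1$;
\item $(1,0)$ gives $1-1+1=1$;
\item $(1,1)$ gives $1+1-1=1$.
\end{itemize}
So $(a,b)_2=\frac{3^r-5}{4}$ for $(0,0)$, and $\frac{3^r-1}{4}$ otherwise. This matches the claimed table.

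For the final inequality I use \eqref{eq:ceil-half-k} with $q=3$. For $r$ odd, $\lceil k/2\rceil=\frac{3^r+1}{4}$, which is the largest value in the odd case. For $r$ even, $\lceil k/2\rceil=\frac{3^r-1}{4}$, which is the largest value in the even case. Hence $(a,b)_2\le\lceil k/2\rceil$ for all $0\le a,b\le 1$.

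The only delicate step is the bookkeeping for $A_r(a)$ when $r$ is odd. There the special index is $a=\frac{q-1}{2}=1$ rather than $a=0$, and it is easy to swap this.
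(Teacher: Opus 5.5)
Your proposal is correct and takes the same route as the paper: use $q-3=0$ in Proposition~\ref{prop:main-term-explicit-general} to get $E_{a,b}=0$, evaluate $A_r(a)$, $B(b)$, $C(a,b)$ with $q-2=1$, and compare with \eqref{eq:ceil-half-k}. Your case-by-case table, including the special index $a=\frac{q-1}{2}=1$ when $r$ is odd, checks out and simply writes out what the paper calls immediate.
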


\begin{proof}
By Proposition \ref{prop:main-term-explicit-general}, we have $|E_{a,b}|\le \frac{(q-2)(q-3)}{(q-1)^2}q^{r/2}$. Substituting $q=3$, we obtain $E_{a,b}=0$ for every choice of $a,b$. Our formula follows immediately from the definitions of $A_r(a)$, $B(b)$, and $C(a,b)$. The final assertion follows from \eqref{eq:ceil-half-k}.
\end{proof}

\begin{cor}\label{cor:character-sum-half-k-qgefour-rgeqthree}
Assume that $q\ge4$ and $r\ge3$. Then $(a,b)_{q-1}< \left\lceil \frac{k}{2}\right\rceil$ for every $0\le a,b\le q-2$.
\end{cor}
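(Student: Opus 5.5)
Combine Proposition~\ref{prop:main-term-explicit-general} with the explicit value of $\lceil k/2\rceil$ in \eqref{eq:ceil-half-k}. The plan is to show $M_{a,b}+|E_{a,b}|<\lceil k/2\rceil$ with room to spare.

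\textbf{Bounding the main term.} Each of $A_r(a),B(b),C(a,b)$ is at most $1$, so
\[
M_{a,b}\le \frac{q^r+1}{(q-1)^2}.
\]
Proposition~\ref{prop:main-term-explicit-general} also gives $|E_{a,b}|\le \frac{(q-2)(q-3)}{(q-1)^2}q^{r/2}<q^{r/2}$. By \eqref{eq:ceil-half-k} we have $\lceil k/2\rceil\ge \frac{k}{2}=\frac{q^r-1}{2(q-1)}$. It therefore suffices to prove
\[
\frac{q^r+1}{(q-1)^2}+\frac{(q-2)(q-3)}{(q-1)^2}q^{r/2}<\frac{q^r-1}{2(q-1)}.
\]
After multiplying by $2(q-1)^2$, this becomes
\[
2q^r+2+2(q-2)(q-3)q^{r/2}<(q-1)(q^r-1),
\]
which is equivalent to
\[
(q-3)q^r>2(q-2)(q-3)q^{r/2}+q+1.
\]

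\textbf{Checking the inequality.} For $q\ge4$, divide by $q-3>0$. The claim reduces to
\[
q^r>2(q-2)q^{r/2}+\frac{q+1}{q-3}.
\]
Since $r\ge3$, we have $q^{r/2}\ge q^{3/2}\ge 2q>2(q-2)$ for $q\ge4$. Hence
\[
q^r-2(q-2)q^{r/2}=q^{r/2}\bigl(q^{r/2}-2(q-2)\bigr)\ge q^{r/2}\cdot 4,
\]
where the last step uses $q^{r/2}-2q+4\ge4$. Finally $4q^{r/2}\ge 32>\frac{q+1}{q-3}$, because $\frac{q+1}{q-3}\le 5$ for $q\ge4$.

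\textbf{Where care is needed.} The main point to get right is the strictness and the case $q=4$, $r=3$, where the margin is smallest. There a direct numerical check confirms the bound: $k=21$, $M\le 65/9$ and $|E|\le \frac{2}{9}\cdot 8$, so the total is at most $81/9=9<11$. Since $(a,b)_{q-1}$ is an integer and we show $(a,b)_{q-1}<k/2\le\lceil k/2\rceil$, the strict inequality follows.
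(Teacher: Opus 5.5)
Your proposal is correct and follows the paper's proof: bound $(a,b)_{q-1}$ by $\frac{q^r+1}{(q-1)^2}+\frac{(q-2)(q-3)}{(q-1)^2}q^{r/2}$ via Proposition~\ref{prop:main-term-explicit-general}, then compare with \eqref{eq:ceil-half-k}. The only difference is that you use the single lower bound $\lceil k/2\rceil\ge\frac{q^r-1}{2(q-1)}$ throughout. This merges the paper's two parity cases into its case~(ii) computation, which you correctly verify for all $q\ge4$, $r\ge3$.
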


\begin{proof}
By Proposition \ref{prop:main-term-explicit-general}, we have
$(a,b)_{q-1}\le \frac{q^r+1}{(q-1)^2}+\frac{(q-2)(q-3)}{(q-1)^2}q^{r/2}$.

\noindent(i): Assume that $q$ is even or $r$ is odd. Then \eqref{eq:ceil-half-k} gives $\left\lceil \frac{k}{2}\right\rceil=\frac{q^r+q-2}{2(q-1)}$, and hence
\[
\left\lceil \frac{k}{2}\right\rceil-(a,b)_{q-1}
\ge
\frac{q-3}{2(q-1)^2}\left(q^r-2(q-2)q^{r/2}+q\right).
\]
Since $r\ge3$ and $q\ge4$, we have $q^{r/2}\ge q^{3/2}>2(q-2)$. Thus $q^r-2(q-2)q^{r/2}+q>0$, and the desired result follows.

\noindent(ii): Assume that $q$ is odd and $r$ is even. Then $q\ge5$ and $r\ge4$. By \eqref{eq:ceil-half-k}, we have $\left\lceil \frac{k}{2}\right\rceil=\frac{q^r-1}{2(q-1)}$, and hence
\[
\left\lceil \frac{k}{2}\right\rceil-(a,b)_{q-1}
\ge
\frac{q^{r/2}(q-3)\left(q^{r/2}-2(q-2)\right)-(q+1)}{2(q-1)^2}.
\]
Since $q^{r/2}\ge q^2$ and $q^{r/2}-2(q-2)\ge q^2-2q+4$, we obtain
$(q-3)\left(q^r-2(q-2)q^{r/2}\right)\ge (q-3)q^2(q^2-2q+4)>q+1$.
Therefore $(a,b)_{q-1}< \left\lceil \frac{k}{2}\right\rceil$.
\end{proof}

\subsection{$r=2$}\label{subsec:r2-general}
Next, we consider the case where $r=2$ separately. In this case, note that $k=\frac{q^2-1}{q-1}=q+1$.
\begin{prop}\label{prop:r2-discriminant-general}
Let $q$ be a power of an odd prime $p^n$,  $0\le a,b\le q-2$, and $\Delta(a,b):=(1+\omega^{ak}-\omega^{bk})^2-4\omega^{ak}$. Then we have
\[
  (a,b)_{q-1}=
  \begin{cases}
    1, & \textup{if } \Delta(a,b)=0,\\
    0, & \textup{if } \Delta(a,b)\neq 0 \textup{ and } \Delta(a,b) \textup{ is a square in } \mathbb{F}_{q},\\
    2, & \textup{if } \Delta(a,b) \textup{ is a nonsquare in } \mathbb{F}_{q}.
  \end{cases}
\]
\end{prop}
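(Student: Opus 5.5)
Set $\alpha:=\omega^{ak}$ and $\beta:=\omega^{bk}$; these lie in $\mathbb{F}_q^\times$. By Lemma \ref{lem:kernel-general-coset}, $(a,b)_{q-1}$ counts the $x\in\mathbb{F}_{q^2}$ with $\Norm(x)=\alpha$ and $\Norm(x+1)=\beta$, where $\Norm=\Norm_{\mathbb{F}_{q^2}/\mathbb{F}_q}$. The plan is to convert these two norm conditions into conditions on the trace and norm of $x$, and then count roots of a single quadratic.

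\textbf{Step 1: reduce to a quadratic.} For any $x\in\mathbb{F}_{q^2}$ we have $\Norm(x+1)=(x+1)(x^q+1)=\Norm(x)+\Tr(x)+1$, where $\Tr=\Tr_{\mathbb{F}_{q^2}/\mathbb{F}_q}$. So the two conditions are equivalent to $\Norm(x)=\alpha$ and $\Tr(x)=\beta-\alpha-1=:t$. Since $\Tr(x),\Norm(x)\in\mathbb{F}_q$, this pair of conditions says that $x$ is a root of
\[
f(X):=X^2-tX+\alpha=X^2+(1+\alpha-\beta)X+\alpha\in\mathbb{F}_q[X],
\]
and that $x$ has the prescribed trace and norm. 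The discriminant of $f$ is exactly $\Delta(a,b)$, and $\alpha\neq0$, so $x=0$ never occurs.

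\textbf{Step 2: count solutions by the type of $f$.} Three cases arise.
\begin{enumerate}[label=\textup{(\roman*)},leftmargin=2.5em]
\item If $\Delta(a,b)$ is a nonsquare in $\mathbb{F}_q$, then $f$ is irreducible over $\mathbb{F}_q$. Its two roots $x,x^q$ lie in $\mathbb{F}_{q^2}\setminus\mathbb{F}_q$. Each root has trace and norm equal to the coefficients of $f$ (Lemma \ref{lem:minpoly-norm} handles the norm; the trace is analogous). This gives $2$ solutions.
\item If $\Delta(a,b)$ is a nonzero square, then $f$ has two distinct roots $x_1,x_2\in\mathbb{F}_q$. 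For $x\in\mathbb{F}_q$ we have $\Norm(x)=x^2$ and $\Tr(x)=2x$. A solution would therefore need $x^2=\alpha=x_1x_2$ and $2x=x_1+x_2$. Since $q$ is odd, this gives $(x_1-x_2)^2=0$, a contradiction. Any solution outside $\mathbb{F}_q$ would have minimal polynomial $f$, which is impossible because $f$ is reducible. So there are $0$ solutions.
\item If $\Delta(a,b)=0$, then $f=(X-x_0)^2$ with $x_0\in\mathbb{F}_q$. Then $\Norm(x_0)=x_0^2=\alpha$ and $\Tr(x_0)=2x_0=t$, so $x_0$ works. As in case (ii), no element outside $\mathbb{F}_q$ works, and the computation $2x=t$ shows $x_0$ is the unique solution in $\mathbb{F}_q$. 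This gives $1$ solution.
\end{enumerate}

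\textbf{Step 3: the uniqueness argument.} The main point needing care is the general principle behind cases (ii) and (iii): an element $x\in\mathbb{F}_{q^2}$ with $\Tr(x)=t$ and $\Norm(x)=\alpha$ satisfies $f(x)=0$. Conversely, a root of $f$ has the prescribed trace and norm only when either $f$ is irreducible, or $f$ is a square and the root lies in $\mathbb{F}_q$.

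I would prove this directly as follows. If $x\notin\mathbb{F}_q$, its minimal polynomial over $\mathbb{F}_q$ is $X^2-\Tr(x)X+\Norm(x)$, which must equal $f$. If $x\in\mathbb{F}_q$, the conditions read $2x=t$ and $x^2=\alpha$, which force $f=(X-x)^2$. Oddness of $q$ is used exactly here, so that $2x=t$ determines $x$.
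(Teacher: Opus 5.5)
Your proof is correct and follows essentially the same route as the paper: split the solution set into its $\mathbb{F}_q$-points and the rest, identify the quadratic $X^2+(1+\omega^{ak}-\omega^{bk})X+\omega^{ak}$ (your identity $\Norm(x+1)=\Norm(x)+\Tr(x)+1$ is just the paper's $m_x(-1)=\Norm(x+1)$ from Lemma \ref{lem:minpoly-norm}), and read off the three cases from $\Delta(a,b)$. The only cosmetic difference is that you note up front that every solution, rational or not, is a root of the single polynomial $f$. The paper instead handles the $\mathbb{F}_q$-points by subtracting $x^2=\omega^{ak}$ from $(x+1)^2=\omega^{bk}$, which gives exactly your condition $2x=t$.
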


\begin{proof}
Let
$S:=\{x\in \mathbb{F}_{q^2}^\times\mid \Norm_{\mathbb{F}_{q^2}/\mathbb{F}_q}(x)=\omega^{ak},\ \Norm_{\mathbb{F}_{q^2}/\mathbb{F}_q}(x+1)=\omega^{bk}\}$. By Lemma \ref{lem:kernel-general-coset}, $(a,b)_{q-1}=\#S$. We decompose $S=(S\cap \mathbb{F}_{q})\sqcup(S\setminus \mathbb{F}_{q})$.

\noindent\textup{(1)}: Let $x\in S\cap \mathbb{F}_{q}$. Then $x^2=\omega^{ak}$ and $(x+1)^2=\omega^{bk}$. Subtracting these two equations gives $2x+1=\omega^{bk}-\omega^{ak}$. Since $q$ is a power of an odd prime, the element $2$ is invertible in $\mathbb{F}_{q}$, thus $x=\frac{\omega^{bk}-\omega^{ak}-1}{2}$ is uniquely determined. Substituting it into $x^2=\omega^{ak}$ yields $\Delta(a,b)=0$. Conversely, if $\Delta(a,b)=0$, then $x=\frac{\omega^{bk}-\omega^{ak}-1}{2}\in \mathbb{F}_{q}$ satisfies the two norm conditions above.

\noindent\textup{(2)}: Let $x\in S\setminus \mathbb{F}_{q}$. Since $\deg_{\mathbb{F}_{q}}(x)=2$, we have $\mathbb{F}_{q}(x)=\mathbb{F}_{q^2}$. Hence the minimal polynomial of $x$ over $\mathbb{F}_{q}$ is a monic polynomial of degree $2$ of the form $m_x(X)=X^2+c_{1}X+c_{0}$. By Lemma \ref{lem:minpoly-norm}, $c_{0}=\omega^{ak}$ and $1-c_{1}+\omega^{ak}=\omega^{bk}$. Therefore $c_{1}=1+\omega^{ak}-\omega^{bk}$, and thus $m_{x}(X)=X^2+(1+\omega^{ak}-\omega^{bk})X+\omega^{ak}$.

Conversely, if this polynomial is irreducible over $\mathbb{F}_{q}$, then its two roots lie in $\mathbb{F}_{q^2}$ and both have this polynomial as their minimal polynomial. Hence, by Lemma \ref{lem:minpoly-norm}, each root satisfies the required norm conditions. Thus the degree-$2$ contribution is exactly $2$ when this polynomial is irreducible over $\mathbb{F}_{q}$. On the other hand, if the polynomial splits over $\mathbb{F}_{q}$, then all its roots lie in $\mathbb{F}_{q}$, i.e., the degree-$2$ contribution is $0$.

The discriminant of this quadratic polynomial is $\Delta(a,b)=(1+\omega^{ak}-\omega^{bk})^2-4\omega^{ak}$. In odd characteristic, the cases $\Delta(a,b)=0$, $\Delta(a,b)\in (\mathbb{F}_q^\times)^2$, and $\Delta(a,b)\notin (\mathbb{F}_q^\times)^2$ correspond respectively to a double root, a split polynomial with two distinct roots, and an irreducible polynomial. Consequently, the corresponding contributions are $1$, $0$, and $2$, respectively.
\end{proof}

\begin{prop}\label{prop:r2-char2-general}
Assume that $q=2^n$, and let $0\le a,b\le q-2$. Put $c:=1+\omega^{ak}+\omega^{bk}$. Then
\[
(a,b)_{q-1}
=
\begin{cases}
1, & \textup{if } c=0,\\
0, & \textup{if } c\neq0 \textup{ and } \Tr_{\mathbb{F}_q/\mathbb{F}_2}(\omega^{ak}/c^2)=0,\\
2, & \textup{if } c\neq0 \textup{ and } \Tr_{\mathbb{F}_q/\mathbb{F}_2}(\omega^{ak}/c^2)=1.
\end{cases}
\]
\end{prop}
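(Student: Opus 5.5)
I will mirror the proof of Proposition \ref{prop:r2-discriminant-general}, replacing the discriminant criterion, which fails in characteristic $2$, by the Artin--Schreier criterion of Fact \ref{fac:artin-schreier-quadratic}. Put $u:=\omega^{ak}$ and $v:=\omega^{bk}$, so $u,v\in\mathbb{F}_q^\times$ and $c=1+u+v=1+u-v$ in characteristic $2$. Let
\[
S:=\{x\in\mathbb{F}_{q^2}^\times \mid \Norm_{\mathbb{F}_{q^2}/\mathbb{F}_q}(x)=u,\ \Norm_{\mathbb{F}_{q^2}/\mathbb{F}_q}(x+1)=v\}.
\]
By Lemma \ref{lem:kernel-general-coset}, $(a,b)_{q-1}=\#S$. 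As before, I split $S=(S\cap\mathbb{F}_q)\sqcup(S\setminus\mathbb{F}_q)$.

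\emph{Rational part.} For $x\in\mathbb{F}_q$ the norm is $x^2$, so $x^2=u$ and $x^2+1=v$. Hence $u+v+1=0$, that is, $c=0$. Conversely, if $c=0$, squaring is a bijection on $\mathbb{F}_q$, so there is a unique $x$ with $x^2=u$. This $x$ is nonzero and satisfies $(x+1)^2=u+1=v$. So $\#(S\cap\mathbb{F}_q)=\one_{c=0}$.

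\emph{Degree-$2$ part.} Lemma \ref{lem:minpoly-norm} shows that $x\in S\setminus\mathbb{F}_q$ has minimal polynomial $f(X)=X^2+cX+u$. The argument is the same as in Proposition \ref{prop:r2-discriminant-general}: $c_0=u$ and $f(1)=1+c_1+u=v$, using $-1=1$. Conversely, if $f$ is irreducible, both of its roots lie in $S$, so the contribution is $2$ if $f$ is irreducible and $0$ otherwise.

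If $c=0$, then $f=X^2+u=(X+\sqrt u)^2$ is reducible, so $(a,b)_{q-1}=1$. If $c\neq0$, substitute $X=cY$ to get $c^2\bigl(Y^2+Y+u/c^2\bigr)$, which is irreducible exactly when $Y^2+Y+u/c^2$ is. By Fact \ref{fac:artin-schreier-quadratic}, this happens if and only if $\Tr_{\mathbb{F}_q/\mathbb{F}_2}(u/c^2)=1$. Moreover, when $c\neq0$ we have $1+c+u=v\neq0$... more to the point, the rational part is empty since $c\neq 0$. This gives the values $0$ and $2$ in the remaining cases.

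There is no serious obstacle. The only points needing care are replacing signs by characteristic $2$ identities when reading off $c_1$ from $m_x(-1)=m_x(1)$, and checking that the rescaling $X=cY$ preserves irreducibility. The case $c=0$ has to be handled separately, since the inseparable square is not covered by the Artin--Schreier normal form.
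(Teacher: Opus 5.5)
Your proposal is correct and follows essentially the same route as the paper: you split $S=(S\cap\mathbb{F}_q)\sqcup(S\setminus\mathbb{F}_q)$, read off the minimal polynomial $X^2+cX+\omega^{ak}$ from Lemma~\ref{lem:minpoly-norm}, handle $c=0$ separately, and for $c\neq0$ rescale by $X=cY$ and apply Fact~\ref{fac:artin-schreier-quadratic}. The only blemish is the abandoned aside ``$1+c+u=v\neq0$\ldots'', which is unnecessary and can simply be deleted.
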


\begin{proof}
Let
$S:=\{x\in \mathbb{F}_{q^2}^\times\mid \Norm_{\mathbb{F}_{q^2}/\mathbb{F}_q}(x)=\omega^{ak},\ \Norm_{\mathbb{F}_{q^2}/\mathbb{F}_q}(x+1)=\omega^{bk}\}$. By Lemma \ref{lem:kernel-general-coset}, $(a,b)_{q-1}=\#S$. We decompose $S=(S\cap\mathbb{F}_q)\sqcup(S\setminus\mathbb{F}_q)$.

\noindent\textup{(1):} Let $x\in S\cap\mathbb{F}_q$. Then $x^2=\omega^{ak}$ and $(x+1)^2=\omega^{bk}$, thus $1+\omega^{ak}+\omega^{bk}=0$. Conversely, if $1+\omega^{ak}+\omega^{bk}=0$, then the unique element $x\in\mathbb{F}_q$ with $x^2=\omega^{ak}$ also satisfies $(x+1)^2=\omega^{bk}$. 

\noindent\textup{(2):} Let $x\in S\setminus\mathbb{F}_q$. By the same argument as in the proof of Proposition \ref{prop:r2-discriminant-general} \textup{(2)}, we have $m_{x}(X)=X^2+cX+\omega^{ak}$. If $c=0$, this polynomial has its unique root in $\mathbb{F}_q$. Hence there is no element $x\in S\setminus\mathbb{F}_q$ in this case. Assume that $c\neq0$. Changing variables $X=cY$, $X^2+cX+\omega^{ak}=c^2(Y^2+Y+\omega^{ak}/c^2)$. By Fact \ref{fac:artin-schreier-quadratic}, this polynomial is irreducible over $\mathbb{F}_q$ if and only if $\Tr_{\mathbb{F}_q/\mathbb{F}_2}(\omega^{ak}/c^2)=1$. If it is irreducible, then it has exactly two roots in $\mathbb{F}_{q^2}$, and both roots belong to $S\setminus\mathbb{F}_q$. Indeed, if $x$ is one of the roots, then the other root is $x^q$, and hence $\Norm_{\mathbb{F}_{q^2}/\mathbb{F}_q}(x)=\omega^{ak}$ and $\Norm_{\mathbb{F}_{q^2}/\mathbb{F}_q}(x+1)=(x+1)(x^q+1)=\omega^{ak}+c+1=\omega^{bk}$. If the polynomial is reducible, then its roots lie in $\mathbb{F}_q$. Hence there is no element $x\in S\setminus\mathbb{F}_q$ in this case. Combining \textup{(1)} and \textup{(2)}, we obtain the three cases in the statement.
\end{proof}

\begin{cor}\label{cor:r2-general-bound}
Let $q$ be a power of a prime $p^n$, and $0\le a,b\le q-2$. Then $(a,b)_{q-1}\in \{0,1,2\}$. In particular, if $q\ge 4$ and $r=2$, then $(a,b)_{q-1}< \left\lceil \frac{k}{2}\right\rceil$.
\end{cor}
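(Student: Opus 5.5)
The corollary concerns $r=2$, where $k=q+1$. The plan is to split on the parity of $q$ and read off the value of $(a,b)_{q-1}$ from the two preceding propositions.

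For $q$ odd, Proposition~\ref{prop:r2-discriminant-general} gives an exact value depending only on $\Delta(a,b)$. Every branch yields $0$, $1$ or $2$, so $(a,b)_{q-1}\in\{0,1,2\}$.

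For $q=2^n$ with $n\ge 2$, Proposition~\ref{prop:r2-char2-general} gives the same trichotomy, now governed by $c$ and the trace $\Tr_{\mathbb{F}_q/\mathbb{F}_2}(\omega^{ak}/c^2)$. The three cases are exhaustive because the trace takes values in $\{0,1\}$.

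The case $q=2$ is the one boundary point not covered by Proposition~\ref{prop:r2-char2-general} as stated. I expect it to be the only real obstacle, though it is minor.
\begin{itemize}
\item Lemma~\ref{lem:q2-general-order1} gives $(0,0)_1=2^2-2=2$, which lies in $\{0,1,2\}$.
\item Alternatively, the proof of Proposition~\ref{prop:r2-char2-general} works verbatim for $n=1$. There $c=1+1+1=1$ and $\Tr(1)=1$, giving the value $2$ again.
\end{itemize}
In every case, then, $(a,b)_{q-1}\in\{0,1,2\}$.

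For the final assertion, assume $q\ge4$ and $r=2$, so $k=q+1\ge5$. Hence $\lceil k/2\rceil\ge 3>2\ge(a,b)_{q-1}$, which is the strict inequality required. This can also be checked against \eqref{eq:ceil-half-k}:
\begin{itemize}
\item for $q$ even, $\lceil k/2\rceil=\frac{q^2+q-2}{2(q-1)}=\frac{q+2}{2}\ge3$;
\item for $q$ odd (so $q\ge5$), $\lceil k/2\rceil=\frac{q+1}{2}\ge3$.
\end{itemize}
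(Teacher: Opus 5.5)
Your proposal is correct and is essentially the paper's proof: you read off the value from Propositions~\ref{prop:r2-discriminant-general} and~\ref{prop:r2-char2-general}, then note that $\lceil k/2\rceil=\lceil (q+1)/2\rceil\ge 3$ for $q\ge 4$. One small point is that Proposition~\ref{prop:r2-char2-general} is stated for every $q=2^n$, including $n=1$, so $q=2$ needs no separate treatment, though your extra check is harmless and agrees with Lemma~\ref{lem:q2-general-order1}.
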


\begin{proof}
$(a,b)_{q-1}\in \{0,1,2\}$ follows from Propositions \ref{prop:r2-discriminant-general} and \ref{prop:r2-char2-general}. Since $q\ge4$, we have $\left\lceil \frac{k}{2}\right\rceil=\left\lceil \frac{q+1}{2}\right\rceil\ge3$. 
\end{proof}

\begin{cor}\label{cor:character-sum-half-k-general}
Let $r\in \mathbb{Z}_{\ge2}$ and $0\le a,b\le q-2$. Then the following hold.
\begin{enumerate}
\item[\textup{(1)}] If $q=2$ and $r=2$, then $(0,0)_1=\left\lceil \frac{k}{2}\right\rceil$. If $q=2$ and $r\ge 3$, then $(0,0)_1>\left\lceil \frac{k}{2}\right\rceil$ 

\item[\textup{(2)}] If $q=3$, then $(a,b)_2\le \left\lceil \frac{k}{2}\right\rceil$. In particular, $(a,b)_2= \left\lceil \frac{k}{2}\right\rceil$ if and only if the following hold:
\[
\begin{cases}
r \textup{ is odd},\ a=0,\textup{ and } b=1,\\
r \textup{ is even},\textup{ and either } a\neq0 \textup{ or } b\neq0.
\end{cases}
\]

\item[\textup{(3)}] If $q\ge4$, then
$(a,b)_{q-1}< \left\lceil \frac{k}{2}\right\rceil$.
\end{enumerate}
Thus, $(a,b)_{q-1}\le \left\lceil \frac{k}{2}\right\rceil$ fails only in $q=2$ and $r\ge 3$.
\end{cor}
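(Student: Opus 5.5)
The corollary is essentially an assembly of results already established, so the plan is to split into the three cases $q=2$, $q=3$, $q\ge4$. For (3) we split further by $r=2$ versus $r\ge3$.

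\textbf{Case $q=2$.} Part (1) is exactly Lemma \ref{lem:q2-general-order1}. Since $\mathbb{F}_2^\times$ is trivial, only $a=b=0$ occurs, and $(0,0)_1=2^r-2$ compares with $\lceil k/2\rceil=2^{r-1}$ as stated.

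\textbf{Case $q=3$.} For part (2) I would invoke Corollary \ref{cor:q-three-exact-general}, which gives $(a,b)_2$ explicitly, and compare with \eqref{eq:ceil-half-k}. Here $k=(3^r-1)/2$. For odd $r$, $\lceil k/2\rceil=(3^r+1)/4$, so equality holds precisely in the case $a=0$, $b=1$; otherwise the value is $(3^r-3)/4$, which is smaller. For even $r$, $\lceil k/2\rceil=(3^r-1)/4$, so equality holds unless $a=b=0$, where the value is $(3^r-5)/4$. Both the inequality and the equality characterization follow.

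\textbf{Case $q\ge4$.} For part (3) with $r\ge3$, this is Corollary \ref{cor:character-sum-half-k-qgefour-rgeqthree}. For $r=2$, it is Corollary \ref{cor:r2-general-bound}: there $(a,b)_{q-1}\le2$, while $\lceil (q+1)/2\rceil\ge3$ once $q\ge4$. The last sentence of the statement then follows by combining (1)–(3).

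No step is a genuine obstacle. The only care needed is the parity bookkeeping for $q=3$, namely matching the cases of \eqref{eq:ceil-half-k} with those of Corollary \ref{cor:q-three-exact-general}.
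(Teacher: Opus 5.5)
Your proposal is correct and follows the paper's proof exactly. Part (1) comes from Lemma~\ref{lem:q2-general-order1}, part (2) from Corollary~\ref{cor:q-three-exact-general} together with \eqref{eq:ceil-half-k}, and part (3) splits into $r\ge3$ (Corollary~\ref{cor:character-sum-half-k-qgefour-rgeqthree}) and $r=2$ (Corollary~\ref{cor:r2-general-bound}). Your explicit parity comparison for $q=3$ is right: $(3^r+1)/4$ versus $(3^r-3)/4$ for odd $r$, and $(3^r-1)/4$ versus $(3^r-5)/4$ for even $r$. It spells out the equality characterization that the paper leaves implicit.
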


\begin{proof}
{ \,}\\
\noindent\textup{(1)}: It follows from Lemma \ref{lem:q2-general-order1}. 

\noindent\textup{(2)}: It follows from Corollary \ref{cor:q-three-exact-general}. 

\noindent\textup{(3)}: It follows from 
Corollaries \ref{cor:character-sum-half-k-qgefour-rgeqthree} and \ref{cor:r2-general-bound}.
\end{proof}

\begin{remark}\label{rem:q2-all-ones}
Assume that $q=2$. Then $e=q-1=1$, and hence $a=b=0$. By Lemma \ref{lem:q2-general-order1}, we have $(0,0)_1=2^r-2$. Since $k=2^r-1$, Lucas' theorem gives $\binom{k}{m}\equiv 1\pmod 2$ for every $0\le m\le k$. Moreover, $\omega^k=1$. Hence each entry of $C^{(0,0)}$ is $1$, and therefore $C^{(0,0)}$ is the all-one matrix.
\end{remark}

\section{A Cayley digraph interpretation}\label{sec:cayley-digraph}

In this section, we give an exact formula for $(a,b)_{q-1}$ in terms of directed walks in a Cayley digraph. It gives a structural interpretation which is complementary to the estimate in \S\ref{sec:order-qminus1}. Throughout this section, we continuously abbreviate $\Norm_{\mathbb{F}_{q^r}/\mathbb{F}_q}$ as $\Norm$.

\begin{defi}\label{def:Wuv-cayley}
{\,}
\begin{itemize}
\item For $u,v\in \mathbb{F}_q^\times$, define $W_{u,v}(q,r):=\#\{(x_1,\dots,x_r)\in (\mathbb{F}_q\setminus\{0,1\})^r \mid \prod_{t=1}^r x_t=u,\ \prod_{t=1}^r (1-x_t)=v\}$.
\item Define $G_q:=\mathbb{F}_q^\times\times \mathbb{F}_q^\times$ and $\Omega_q:=\{(x,1-x)\mid x\in \mathbb{F}_q\setminus\{0,1\}\}\subset G_q$.
\item Define $\Gamma_q$ to be the directed Cayley graph with vertex set $G_q$, whose directed edges from $g$ to $gh$ are $(g,gh)$ for $g\in G_q$ and $h\in \Omega_q$.
\item Fix an ordering of $G_q$. For $(u,v)\in G_q$, let $\nu(u,v)$ denote the index of the vertex $(u,v)$. Let $A_q$ be the adjacency matrix of $\Gamma_q$ whose $(\nu(u,v),\nu(ux,v(1-x)))$-entry is $1$ for $(u,v)\in G_q$ and $x\in\mathbb{F}_q\setminus\{0,1\}$.
\end{itemize}
\end{defi}

\begin{lem}\label{lem:Wuv-adjacency}
For every $u,v\in \mathbb{F}_q^\times$, we have $W_{u,v}(q,r)=(A_q^r)_{\nu(1,1),\nu(u,v)}$.
\end{lem}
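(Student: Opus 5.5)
The plan is the standard walk-counting argument for powers of an adjacency matrix, specialized to the Cayley digraph $\Gamma_q$. First I will record the basic interpretation of a single step. For fixed vertices $g=(u,v)$ and $g'=(u',v')$ of $G_q$, the $(\nu(g),\nu(g'))$-entry of $A_q$ equals
\[
\#\{x\in\mathbb{F}_q\setminus\{0,1\}\mid (ux,v(1-x))=(u',v')\}.
\]
This holds because the map $\mathbb{F}_q\setminus\{0,1\}\to\Omega_q$, $x\mapsto(x,1-x)$, is injective (the first coordinate determines $x$). Hence distinct $x$ give distinct targets $gh$, so the entry is $0$ or $1$ exactly as in Definition \ref{def:Wuv-cayley}. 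It is also exactly the number of such $x$, which is $0$ or $1$.

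Next, I will expand the matrix power:
\[
(A_q^r)_{\nu(1,1),\nu(u,v)}=\sum_{g_1,\dots,g_{r-1}\in G_q}\prod_{t=1}^{r}(A_q)_{\nu(g_{t-1}),\nu(g_t)}, \qquad g_0=(1,1),\ g_r=(u,v).
\]
Each nonzero term is $1$ and corresponds to a directed walk $g_0\to g_1\to\cdots\to g_r$ in $\Gamma_q$. By the first step, for each $t$ there is a unique $x_t\in\mathbb{F}_q\setminus\{0,1\}$ with $g_t=g_{t-1}\cdot(x_t,1-x_t)$. Thus the map sending a walk to $(x_1,\dots,x_r)$ is well defined.

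Then I will show this map is a bijection onto the set defining $W_{u,v}(q,r)$. By induction, $g_t=\bigl(\prod_{s\le t}x_s,\ \prod_{s\le t}(1-x_s)\bigr)$, so $g_r=(u,v)$ is precisely the pair of product conditions. Conversely, any tuple $(x_1,\dots,x_r)\in(\mathbb{F}_q\setminus\{0,1\})^r$ satisfying the conditions defines the walk with $g_t$ as above. The intermediate vertices lie in $G_q$ since each $x_s,1-x_s\neq0$. The two constructions are mutually inverse, so the count of walks equals $W_{u,v}(q,r)$.

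There is no real obstacle. The only point needing care is that $A_q$ has $0/1$ entries without multiplicities, and that $x$ is recovered uniquely from each edge. Both follow from the injectivity of $x\mapsto(x,1-x)$ and the cancellativity of the group $G_q$.
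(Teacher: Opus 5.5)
Your proposal is correct and follows essentially the same route as the paper: you identify length-$r$ walks from $(1,1)$ in $\Gamma_q$ with sequences $(x_1,\dots,x_r)\in(\mathbb{F}_q\setminus\{0,1\})^r$ whose terminal vertex is $\bigl(\prod_t x_t,\prod_t(1-x_t)\bigr)$. The paper states this in one line; you additionally check that $A_q$ has no multiple edges (via injectivity of $x\mapsto(x,1-x)$) and write out the matrix-power expansion, which is a welcome but not essential elaboration.
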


\begin{proof}
A $r$-length walk in $\Gamma_q$ starting from $(1,1)$ is determined by a sequence $x_1,\dots,x_r\in \mathbb{F}_q\setminus\{0,1\}$, and its terminal vertex is $(\prod_{t=1}^r x_t,\ \prod_{t=1}^r (1-x_t))$. Thus the number of such walks ending at $(u,v)$ is exactly $W_{u,v}(q,r)$.
\end{proof}

\begin{prop}\label{prop:general-order-qminus1-graph}
Let $0\le a,b\le q-2$. Then
\begin{subequations}
\begin{align}
(a,b)_{q-1}
&=
\frac{q^r-(-1)^{r-1}(q-2)^r-2}{(q-1)^2}
+(-1)^{r-1}W_{(-1)^r\omega^{ak},\omega^{bk}}(q,r),
\label{eq:general-order-qminus1-W-formula}\\
(a,b)_{q-1}
&=
\frac{q^r-(-1)^{r-1}(q-2)^r-2}{(q-1)^2}
+(-1)^{r-1}(A_q^r)_{\nu(1,1),\nu((-1)^r\omega^{ak},\omega^{bk})}.
\label{eq:general-order-qminus1-adjacency-formula}
\end{align}
\end{subequations}
\end{prop}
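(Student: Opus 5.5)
Start from the exact character formula in Proposition \ref{prop:exact-character-formula-general} and rewrite the right-hand side with Lemma \ref{lem:weighted-jacobi-norm-comparison}, taking $u=\omega^{ak}$ and $v=\omega^{bk}$. This gives
\[
(q-1)^2(a,b)_{q-1}=q^r-2+(-1)^{r-1}\Bigl(\sum_{\chi,\psi}\chi(\omega^{-ak})\psi(\omega^{-bk})J_{\mathbb{F}_q}(\chi,\psi)^r-(q-2)^r\Bigr).
\]
The whole task then reduces to interpreting the $r$-th power sum $\sum_{\chi,\psi}\chi(u^{-1})\psi(v^{-1})J_{\mathbb{F}_q}(\chi,\psi)^r$ combinatorially.

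To do this, expand $J_{\mathbb{F}_q}(\chi,\psi)^r=\sum_{y_1,\dots,y_r\in\mathbb{F}_q}\chi(\prod y_t)\psi(\prod(y_t+1))$. Only terms with every $y_t\notin\{0,-1\}$ survive, because $\chi(0)=\psi(0)=0$, and in the trivial-character term this is guaranteed by the convention $\one(0)=0$. Exchanging the order of summation and applying the orthogonality relation (Fact \ref{fac:standard-facts}) to both $\chi$ and $\psi$, the power sum becomes $(q-1)^2$ times the number of tuples $(y_1,\dots,y_r)\in(\mathbb{F}_q\setminus\{0,-1\})^r$ with $\prod y_t=u$ and $\prod(y_t+1)=v$.

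Next substitute $y_t=-x_t$, which sends $\mathbb{F}_q\setminus\{0,-1\}$ bijectively onto $\mathbb{F}_q\setminus\{0,1\}$. The two conditions become $\prod x_t=(-1)^r u$ and $\prod(1-x_t)=v$, so the count equals $W_{(-1)^r\omega^{ak},\omega^{bk}}(q,r)$. Substituting back and dividing by $(q-1)^2$ gives
\[
(a,b)_{q-1}=\frac{q^r-2-(-1)^{r-1}(q-2)^r}{(q-1)^2}+(-1)^{r-1}W_{(-1)^r\omega^{ak},\omega^{bk}}(q,r),
\]
which is \eqref{eq:general-order-qminus1-W-formula}. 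The adjacency formula \eqref{eq:general-order-qminus1-adjacency-formula} then follows immediately from Lemma \ref{lem:Wuv-adjacency}.

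The step needing the most care is the bookkeeping of signs and of the degenerate values. Three points must be checked: the sign $(-1)^r$ produced by the substitution $y=-x$; the fact that the $(\one,\one)$ term is accounted for consistently on both sides; and the fact that using $J$ rather than $J'$ gives the factor $\prod(1+y_t)$ rather than $\prod(1-y_t)$. The convention $\one(0)=0$ is what makes the expansion of $J(\one,\one)^r=(q-2)^r$ count exactly the tuples avoiding $0$ and $-1$. That matches the subtraction of $(q-2)^r$ in Lemma \ref{lem:weighted-jacobi-norm-comparison}, so no extra correction terms arise.
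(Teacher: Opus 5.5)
Your proof is correct and follows essentially the same route as the paper. Both combine the exact character formula with Lemma \ref{lem:weighted-jacobi-norm-comparison} and then use orthogonality to identify $\sum_{\chi,\psi}\chi(\omega^{-ak})\psi(\omega^{-bk})J_{\mathbb{F}_q}(\chi,\psi)^r$ with $(q-1)^2W_{(-1)^r\omega^{ak},\omega^{bk}}(q,r)$. The only cosmetic difference is where the sign comes from: the paper expands $J'_{\mathbb{F}_q}(\chi,\psi)^r$ and converts via $J'_{\mathbb{F}_q}(\chi,\psi)^r=\chi((-1)^r)J_{\mathbb{F}_q}(\chi,\psi)^r$, whereas you expand $J_{\mathbb{F}_q}(\chi,\psi)^r$ directly and substitute $y_t=-x_t$, which amounts to the same thing.
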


\begin{proof}
By the definition of $W_{u,v}(q,r)$ and Fact \ref{fac:standard-facts}, we have
\[
(q-1)^2W_{u,v}(q,r)
=
\sum_{\chi,\psi\in \widehat{\mathbb{F}_q^\times}}
\chi(u^{-1})\psi(v^{-1})
\left(\sum_{x\in\mathbb{F}_q\setminus\{0,1\}}\chi(x)\psi(1-x)\right)^r\\
=
\sum_{\chi,\psi\in \widehat{\mathbb{F}_q^\times}}
\chi(u^{-1})\psi(v^{-1})J'_{\mathbb{F}_q}(\chi,\psi)^r.\]

Since $\chi(-1)^2=1$, \eqref{eq:J-Jprime-relation} gives
$J'_{\mathbb{F}_q}(\chi,\psi)^r=\chi((-1)^r)J_{\mathbb{F}_q}(\chi,\psi)^r$.
Thus
\begin{align*}
(q-1)^2W_{u,v}(q,r)
&=
\sum_{\chi,\psi\in \widehat{\mathbb{F}_q^\times}}
\chi((-1)^r u^{-1})\psi(v^{-1})J_{\mathbb{F}_q}(\chi,\psi)^r.
\end{align*}
Substituting $u=(-1)^r\omega^{ak}$ and $v=\omega^{bk}$, we obtain
\begin{equation}\label{eq:specialized-W-character-formula}
(q-1)^2W_{(-1)^r\omega^{ak},\omega^{bk}}(q,r)
=
\sum_{\chi,\psi\in \widehat{\mathbb{F}_q^\times}}
\chi(\omega^{-ak})\psi(\omega^{-bk})J_{\mathbb{F}_q}(\chi,\psi)^r.
\end{equation}

By \eqref{eq:weighted-jacobi-norm-comparison} with $u=\omega^{ak}$ and $v=\omega^{bk}$, together with \eqref{eq:specialized-W-character-formula}, we obtain
\begin{equation}\label{eq:specialized-norm-character-formula}
\sum_{\chi,\psi\in \widehat{\mathbb{F}_q^\times}}
\chi(\omega^{-ak})\psi(\omega^{-bk})
J_{\mathbb{F}_{q^r}}(\chi\circ\Norm,\psi\circ\Norm)
=
q^r-2+(-1)^{r-1}\bigl((q-1)^2W_{(-1)^r\omega^{ak},\omega^{bk}}(q,r)-(q-2)^r\bigr).
\end{equation}
On the other hand, \eqref{eq:exact-character-formula-general} gives
$(q-1)^2(a,b)_{q-1}=\sum_{\chi,\psi\in \widehat{\mathbb{F}_q^\times}}\chi(\omega^{-ak})\psi(\omega^{-bk})J_{\mathbb{F}_{q^r}}(\chi\circ\Norm,\psi\circ\Norm)$.
Combining it with \eqref{eq:specialized-norm-character-formula}, we obtain
\[
(q-1)^2(a,b)_{q-1}
=
q^r-2+(-1)^{r-1}\bigl((q-1)^2W_{(-1)^r\omega^{ak},\omega^{bk}}(q,r)-(q-2)^r\bigr).
\]
Dividing by $(q-1)^2$, we obtain \eqref{eq:general-order-qminus1-W-formula}. Finally, \eqref{eq:general-order-qminus1-adjacency-formula} follows from Lemma \ref{lem:Wuv-adjacency}.
\end{proof}

\begin{exam}\label{exam:q5-adjacency-general}
Take $q=5$. For $s,t\in\{1,2,3,4\}$, put
$w_{4(s-1)+t}:=(s,t)$ and $\nu(s,t):=4(s-1)+t$.
The adjacency matrix of $\Gamma_5$ with respect to this numbering is
\[
A_5=
\begin{psmallmatrix}
0&0&0&0&0&0&0&1&0&0&1&0&0&1&0&0\\
0&0&0&0&0&0&1&0&1&0&0&0&0&0&0&1\\
0&0&0&0&0&1&0&0&0&0&0&1&1&0&0&0\\
0&0&0&0&1&0&0&0&0&1&0&0&0&0&1&0\\
0&0&1&0&0&0&0&0&0&1&0&0&0&0&0&1\\
1&0&0&0&0&0&0&0&0&0&0&1&0&0&1&0\\
0&0&0&1&0&0&0&0&1&0&0&0&0&1&0&0
\\
0&1&0&0&0&0&0&0&0&0&1&0&1&0&0&0\\
0&0&0&1&0&1&0&0&0&0&0&0&0&0&1&0\\
0&0&1&0&0&0&0&1&0&0&0&0&1&0&0&0\\
0&1&0&0&1&0&0&0&0&0&0&0&0&0&0&1\\
1&0&0&0&0&0&1&0&0&0&0&0&0&1&0&0\\
0&1&0&0&0&0&1&0&0&0&0&1&0&0&0&0\\
0&0&0&1&1&0&0&0&0&0&1&0&0&0&0&0\\
1&0&0&0&0&0&0&1&0&1&0&0&0&0&0&0\\
0&0&1&0&0&1&0&0&1&0&0&0&0&0&0&0
\end{psmallmatrix}.
\]
Choose $\omega$ so that $\omega^k=2$ in $\mathbb{F}_5^\times$. If $r=6$, then $k=(5^6-1)/(5-1)$, and Proposition \ref{prop:general-order-qminus1-graph} gives $(a,b)_4=1022-(A_5^6)_{\nu(1,1),\nu(2^a,2^b)}$. In particular, $(0,0)_4=1022-(A_5^6)_{1,1}=1022-90=932$, and $(0,1)_4=1022-(A_5^6)_{1,2}=1022-42=980$.
\end{exam}

\begin{remark}\label{rem:half-k-adjacency-criterion}
Combining \eqref{eq:ceil-half-k} with \eqref{eq:general-order-qminus1-adjacency-formula} gives the following equivalent conditions for $(a,b)_{q-1}\le \left\lceil \frac{k}{2}\right\rceil$.
\begin{enumerate}
\item[\textup{(1)}]
If $r$ is odd, then
$(A_q^r)_{\nu(1,1),\nu(-\omega^{ak},\omega^{bk})}\le \frac{(q-3)q^r+2(q-2)^r+q^2-3q+6}{2(q-1)^2}$.

\item[\textup{(2)}]
If $r$ is even and $q$ is odd, then 
$(A_q^r)_{\nu(1,1),\nu(\omega^{ak},\omega^{bk})}\ge \frac{-(q-3)q^r+2(q-2)^r+q-5}{2(q-1)^2}$.

\item[\textup{(3)}]
If $r$ is even and $q$ is even, then 
$(A_q^r)_{\nu(1,1),\nu(\omega^{ak},\omega^{bk})}\ge \frac{-(q-3)q^r+2(q-2)^r-q^2+3q-6}{2(q-1)^2}$.
\end{enumerate}
In particular, when $r$ is even and $q\neq2$, these conditions are automatically satisfied since the right-hand sides in \textup{(2)} and \textup{(3)} are negative, while the left-hand sides are walk counts.
\end{remark}

\section{Sharper bounds in odd prime $r$ }\label{sec:small-r}

In this section, we consider the case where $r$ is an odd prime $\ell$. Particularly,  we consider the case $r=3$ separately.

Define
$T_{\ell}(a,b):=\{x\in \mathbb{F}_q\mid x^\ell=\omega^{ak},\ (x+1)^\ell=\omega^{bk}\}$,
and
$I_{\ell}(a,b):=\{m_x(X)\in \mathbb{F}_q[X]\mid x\in \bar{\mathbb{F}}_q,\ [\mathbb{F}_q(x):\mathbb{F}_q]=\ell,\ m_x(0)=-\omega^{ak},\ m_x(-1)=-\omega^{bk}\}$.

\begin{prop}\label{prop:odd-prime-structure-general}
Let $r$ be an odd prime $\ell$, $q$ be a prime power $p^n$, and $0\le a,b\le q-2$. Then $(a,b)_{q-1}=\#T_{\ell}(a,b)+\ell \#I_{\ell}(a,b)$.
\end{prop}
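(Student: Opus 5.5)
We follow the pattern of the proofs of Propositions \ref{prop:r2-discriminant-general} and \ref{prop:r2-char2-general}. Put
\[
S:=\{x\in\mathbb{F}_{q^\ell}^\times\mid \Norm_{\mathbb{F}_{q^\ell}/\mathbb{F}_q}(x)=\omega^{ak},\ \Norm_{\mathbb{F}_{q^\ell}/\mathbb{F}_q}(x+1)=\omega^{bk}\}.
\]
By Lemma \ref{lem:kernel-general-coset}, we have $(a,b)_{q-1}=\#S$. Since $\ell$ is prime, every $x\in\mathbb{F}_{q^\ell}$ has degree $1$ or $\ell$ over $\mathbb{F}_q$. Hence we decompose
\[
S=(S\cap\mathbb{F}_q)\sqcup(S\setminus\mathbb{F}_q)
\]
and count the two parts separately.

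\textbf{Degree $1$ part.} For $x\in\mathbb{F}_q$, the norm from $\mathbb{F}_{q^\ell}$ is $\Norm(x)=x^\ell$, and likewise $\Norm(x+1)=(x+1)^\ell$. So $x\in S\cap\mathbb{F}_q$ exactly when $x^\ell=\omega^{ak}$ and $(x+1)^\ell=\omega^{bk}$. This is precisely $x\in T_\ell(a,b)$. Note that $x=0$ is automatically excluded, since $\omega^{ak}\neq0$. Thus $\#(S\cap\mathbb{F}_q)=\#T_\ell(a,b)$.

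\textbf{Degree $\ell$ part.} For $x\in S\setminus\mathbb{F}_q$, we have $\mathbb{F}_q(x)=\mathbb{F}_{q^\ell}$, so Lemma \ref{lem:minpoly-norm} applies with $d=\ell$ odd. It gives $m_x(0)=-\Norm(x)=-\omega^{ak}$ and $m_x(-1)=-\Norm(x+1)=-\omega^{bk}$. Hence $m_x\in I_\ell(a,b)$, and so the map $x\mapsto m_x$ sends $S\setminus\mathbb{F}_q$ into $I_\ell(a,b)$.

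Conversely, take any $f\in I_\ell(a,b)$. It is irreducible of degree $\ell$ over $\mathbb{F}_q$, so all its roots lie in $\mathbb{F}_{q^\ell}$. These roots are exactly $\ell$ distinct conjugates, since irreducible polynomials over finite fields are separable. Each root $y$ has $m_y=f$, so Lemma \ref{lem:minpoly-norm} run backwards gives $\Norm(y)=-f(0)=\omega^{ak}$ and $\Norm(y+1)=-f(-1)=\omega^{bk}$. Each root also has degree $\ell$, so it lies in $S\setminus\mathbb{F}_q$. Therefore every fibre of $x\mapsto m_x$ has exactly $\ell$ elements, and $\#(S\setminus\mathbb{F}_q)=\ell\,\#I_\ell(a,b)$.

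Adding the two parts gives $(a,b)_{q-1}=\#T_\ell(a,b)+\ell\,\#I_\ell(a,b)$. There is no real obstacle in this argument. The only point needing care is the sign bookkeeping: because $\ell$ is odd, $(-1)^\ell=-1$, which matches the minus signs in the definition of $I_\ell(a,b)$. We should also state explicitly that the norm is taken from $\mathbb{F}_{q^\ell}=\mathbb{F}_q(x)$, which is exactly the field in Lemma \ref{lem:minpoly-norm}.
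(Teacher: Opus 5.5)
Your proposal is correct and follows the paper's proof essentially step for step. It uses the same set $S$ obtained from Lemma~\ref{lem:kernel-general-coset}, the same split into $S\cap\mathbb{F}_q$ (which gives $T_\ell(a,b)$) and $S\setminus\mathbb{F}_q$, and the same map $x\mapsto m_x$ onto $I_\ell(a,b)$ whose fibres are the $\ell$ distinct roots, with Lemma~\ref{lem:minpoly-norm} and the oddness of $\ell$ handling the signs.
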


\begin{proof}
Put $S:=\{x\in \mathbb{F}_{q^\ell}^\times\mid \Norm_{\mathbb{F}_{q^\ell}/\mathbb{F}_q}(x)=\omega^{ak},\ \Norm_{\mathbb{F}_{q^\ell}/\mathbb{F}_q}(x+1)=\omega^{bk}\}$. By Lemma \ref{lem:kernel-general-coset}, we have $(a,b)_{q-1}=\#S$. Since $[\mathbb{F}_{q^\ell}:\mathbb{F}_q]=\ell$ is prime, we decompose $S=(S\cap \mathbb{F}_q)\sqcup(S\setminus \mathbb{F}_q)$.

\noindent\textup{(1)}: Let $x\in S\cap \mathbb{F}_q$. Then $\Norm_{\mathbb{F}_{q^\ell}/\mathbb{F}_q}(x)=x^\ell$ and $\Norm_{\mathbb{F}_{q^\ell}/\mathbb{F}_q}(x+1)=(x+1)^\ell$. Therefore $\#(S\cap \mathbb{F}_q)=\#T_{\ell}(a,b)$.

\noindent\textup{(2)}: Let $x\in S\setminus \mathbb{F}_q$. Define $\Phi:S\setminus \mathbb{F}_q\to I_{\ell}(a,b)$ $(x\mapsto m_x(X))$. Since $x\notin \mathbb{F}_q$, we have $\mathbb{F}_q(x)=\mathbb{F}_{q^\ell}$. Hence $m_x(X)$ is a monic irreducible polynomial over $\mathbb{F}_q$ of degree $\ell$. Moreover, Lemma \ref{lem:minpoly-norm} gives $m_x(0)=-\omega^{ak}$ and $m_x(-1)=-\omega^{bk}$. Thus $m_x(X)\in I_{\ell}(a,b)$.

Now let $f(X)\in I_{\ell}(a,b)$. Since $f(X)$ is irreducible of degree $\ell$, it has exactly $\ell$ distinct roots in $\mathbb{F}_{q^\ell}$. Let $y$ be one of these roots. Then $[\mathbb{F}_q(y):\mathbb{F}_q]=\ell$, hence $\mathbb{F}_q(y)=\mathbb{F}_{q^\ell}$. Moreover, $m_y(X)=f(X)$, and Lemma \ref{lem:minpoly-norm} gives $\Norm_{\mathbb{F}_{q^\ell}/\mathbb{F}_q}(y)=\omega^{ak}$ and $\Norm_{\mathbb{F}_{q^\ell}/\mathbb{F}_q}(y+1)=\omega^{bk}$. Thus $y\in S\setminus \mathbb{F}_q$. Therefore $\Phi^{-1}(f)$ exactly consists of the $\ell$ roots of $f$, and hence $\#\Phi^{-1}(f)=\ell$.

It follows that $\#(S\setminus \mathbb{F}_q)=\ell\#I_{\ell}(a,b)$. Combining (1) and (2), we obtain the statement.
\end{proof}

\begin{prop}\label{lem:T-bound-odd-prime-general}
Let $r$ be an odd prime $\ell$, $q$ be a prime power $p^n$, and $0\le a,b\le q-2$. Then
\[
\begin{aligned}
\#T_{\ell}(a,b)&=
\begin{cases}
1, & \textup{if }p=\ell,  1+\omega^{ak}-\omega^{bk}=0,\\
0, & \textup{if }p=\ell,  1+\omega^{ak}-\omega^{bk}\neq0,
\end{cases}
&
0\le \#T_{\ell}(a,b)&\le \ell-1
\quad \textup{if }p\neq\ell.
\end{aligned}
\]
\end{prop}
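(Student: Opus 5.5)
We work directly with the defining system $x^\ell=\omega^{ak}$, $(x+1)^\ell=\omega^{bk}$ for $x\in\mathbb{F}_q$, and split according to whether $p=\ell$.

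\textbf{Case $p=\ell$.} The map $x\mapsto x^\ell=x^p$ is the Frobenius, which is an automorphism of $\mathbb{F}_q$. So the condition $x^\ell=\omega^{ak}$ determines $x$ uniquely, namely $x=(\omega^{ak})^{1/p}$. Since $(x+1)^p=x^p+1$, the second condition becomes $\omega^{ak}+1=\omega^{bk}$, i.e.\ $1+\omega^{ak}-\omega^{bk}=0$. If this holds, the unique $x$ lies in $T_\ell(a,b)$, so $\#T_\ell(a,b)=1$. If it fails, no $x$ works and $\#T_\ell(a,b)=0$. This gives the two stated values.

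\textbf{Case $p\neq\ell$.} Any $x\in T_\ell(a,b)$ is a common root in $\mathbb{F}_q$ of
\[
f(X)=X^\ell-\omega^{ak}, \qquad g(X)=(X+1)^\ell-\omega^{bk}.
\]
Their difference is
\[
h(X):=g(X)-f(X)=\sum_{j=0}^{\ell-1}\binom{\ell}{j}X^j+\omega^{ak}-\omega^{bk}.
\]
Its $X^{\ell-1}$ coefficient is $\ell$, which is nonzero in $\mathbb{F}_q$ because $p\neq\ell$. Hence $h$ is a nonzero polynomial of degree exactly $\ell-1$. Every element of $T_\ell(a,b)$ is a root of $h$, so $\#T_\ell(a,b)\le \ell-1$. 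The lower bound $0$ is trivial.

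The main (mild) obstacle is only making sure $h\not\equiv0$; the leading coefficient $\ell$ settles this in every characteristic $p\neq\ell$, including $p=2$. No resultant argument (Fact \ref{fac:resultant-common-root}) is needed.
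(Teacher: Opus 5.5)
Your proof is correct and follows essentially the same route as the paper. When $p=\ell$, it uses bijectivity of the Frobenius map together with $(x+1)^p=x^p+1$. When $p\neq\ell$, it bounds the common roots by those of $(X+1)^\ell-X^\ell+\omega^{ak}-\omega^{bk}$, which has degree $\ell-1$ with leading coefficient $\ell\neq 0$.
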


\begin{proof}
We prove the statement by considering the following two cases.

\noindent\textup{(1)}: Assume that $p=\ell$. Then $(X+1)^\ell=X^\ell+1$. If $1+\omega^{ak}-\omega^{bk}\neq0$, then $T_{\ell}(a,b)=\emptyset$. If $1+\omega^{ak}-\omega^{bk}=0$, then $T_{\ell}(a,b)=\{x\in\mathbb{F}_q\mid x^\ell=\omega^{ak}\}$. Since $x\mapsto x^\ell$ is the Frobenius automorphism of $\mathbb{F}_q$, this set has exactly one element. It proves our formula in the case $p=\ell$.

\noindent\textup{(2)}: Assume that $p\neq\ell$. If $x\in T_{\ell}(a,b)$, then $x$ is a root of $(X+1)^\ell-X^\ell+\omega^{ak}-\omega^{bk}$. Since the highest degree term of this polynomial is $\ell X^{\ell-1}$, it has degree $\ell-1$. Therefore $0\le \#T_{\ell}(a,b)\le \ell-1$.
\end{proof}

Define
$P_{\ell}(a,b):=\{f(X)\in \mathbb{F}_q[X]\mid \deg f=\ell,\ f \textup{ is monic},\ f(0)=-\omega^{ak},\ f(-1)=-\omega^{bk}\}$.
Then $I_{\ell}(a,b)\subset P_{\ell}(a,b)$.

\begin{prop}\label{lem:I-bound-odd-prime-general}
Let $r$ be an odd prime $\ell$, $q$ be a prime power $p^n$, and $0\le a,b\le q-2$. Then $0\le \#I_{\ell}(a,b)\le q^{\ell-2}$.
\end{prop}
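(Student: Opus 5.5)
The plan is to bound $\#I_{\ell}(a,b)$ by $\#P_{\ell}(a,b)$, using the inclusion $I_{\ell}(a,b)\subset P_{\ell}(a,b)$ noted just before the statement, and then to count $P_{\ell}(a,b)$ exactly. The lower bound $0\le \#I_{\ell}(a,b)$ is trivial, so everything reduces to showing $\#P_{\ell}(a,b)=q^{\ell-2}$.

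Write a monic polynomial of degree $\ell$ as $f(X)=X^\ell+c_{\ell-1}X^{\ell-1}+\cdots+c_1X+c_0$ with $c_i\in\mathbb{F}_q$. Each such $f$ is determined by the coefficient vector $(c_0,\dots,c_{\ell-1})\in\mathbb{F}_q^{\ell}$. The condition $f(0)=-\omega^{ak}$ fixes $c_0=-\omega^{ak}$. The condition $f(-1)=-\omega^{bk}$ then reads
\[
(-1)^\ell+\sum_{i=1}^{\ell-1}(-1)^i c_i+c_0=-\omega^{bk},
\]
which is a single affine-linear equation in $(c_1,\dots,c_{\ell-1})$. The coefficient of $c_1$ in this equation is $-1\neq 0$, so the equation is nondegenerate. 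Consequently, for each of the $q^{\ell-2}$ choices of $(c_2,\dots,c_{\ell-1})$, the value of $c_1$ is uniquely determined. Here $\ell\ge 3$, so $\ell-2\ge1$ and the count makes sense.

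This gives $\#P_{\ell}(a,b)=q^{\ell-2}$, and hence $0\le\#I_{\ell}(a,b)\le q^{\ell-2}$. There is no real obstacle. The only point to check is that the linear condition coming from $f(-1)$ is independent of the condition fixing $c_0$, and this holds because $c_1$ appears in the second condition with coefficient $-1\neq0$ and does not appear in the first.
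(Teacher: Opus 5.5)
Your proof is correct and follows the paper's argument essentially verbatim. The paper likewise uses $I_{\ell}(a,b)\subset P_{\ell}(a,b)$, fixes $c_0=-\omega^{ak}$, and solves $f(-1)=-\omega^{bk}$ for $c_1=\sum_{\mu=2}^{\ell-1}(-1)^\mu c_\mu-(1+\omega^{ak}-\omega^{bk})$. This gives $\#P_{\ell}(a,b)=q^{\ell-2}$ and hence the stated bound.
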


\begin{proof}
By the definition, $\#I_{\ell}(a,b)\ge 0$. Any $f(X)\in P_{\ell}(a,b)$ can be written in the form $f(X)=X^\ell+c_{\ell-1}X^{\ell-1}+\cdots+c_1X-\omega^{ak}$. Since $\ell$ is an odd prime, the condition $f(-1)=-\omega^{bk}$ gives $c_1=\sum_{\mu=2}^{\ell-1}(-1)^\mu c_\mu-(1+\omega^{ak}-\omega^{bk})$. Therefore an element of $P_{\ell}(a,b)$ is determined by $\ell-2$ coefficients, and hence $\#P_{\ell}(a,b)=q^{\ell-2}$. Since $I_{\ell}(a,b)\subset P_{\ell}(a,b)$, it follows that $\#I_{\ell}(a,b)\le q^{\ell-2}$.
\end{proof}

\begin{cor}\label{cor:V-bound-odd-prime-general}
Let $r$ be an odd prime $\ell$, $q$ be a prime power $p^n$, and $0\le a,b\le q-2$. Then
\[
0\le (a,b)_{q-1}\le
\begin{cases}
\ell q^{\ell-2}+1, & \textup{if } p=\ell,\\
\ell q^{\ell-2}+\ell-1, & \textup{if } p\neq\ell.
\end{cases}
\]
\end{cor}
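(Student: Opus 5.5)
This bound is a direct assembly of the three preceding results, with no new estimate needed. First, by Proposition \ref{prop:odd-prime-structure-general},
\[
(a,b)_{q-1}=\#T_{\ell}(a,b)+\ell\,\#I_{\ell}(a,b).
\]
Both summands are cardinalities of finite sets, so the lower bound $0\le (a,b)_{q-1}$ is immediate; it also follows from the definition of a cyclotomic number as the size of a set.

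For the upper bound, I would first bound the irreducible-polynomial term. Proposition \ref{lem:I-bound-odd-prime-general} gives $\#I_{\ell}(a,b)\le q^{\ell-2}$, so
\[
\ell\,\#I_{\ell}(a,b)\le \ell q^{\ell-2}.
\]
Next I would split into cases according to whether $p=\ell$, and apply Proposition \ref{lem:T-bound-odd-prime-general} to the term $\#T_{\ell}(a,b)$:
\begin{itemize}
\item If $p=\ell$, then $\#T_{\ell}(a,b)\in\{0,1\}$, hence $(a,b)_{q-1}\le \ell q^{\ell-2}+1$.
\item If $p\neq\ell$, then $\#T_{\ell}(a,b)\le \ell-1$, hence $(a,b)_{q-1}\le \ell q^{\ell-2}+\ell-1$.
\end{itemize}
Adding the two bounds in each case gives exactly the two stated upper bounds.

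There is no real obstacle here, since all the substantive work (the decomposition of $S$ by degree, and the coefficient count for $P_{\ell}(a,b)$) was done in the preceding propositions. The only thing to check is that their hypotheses match: $r=\ell$ is an odd prime, $q=p^n$ is arbitrary, and $0\le a,b\le q-2$. These are precisely the standing assumptions of the corollary, so the two estimates combine without further conditions.
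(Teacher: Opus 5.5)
Your proposal is correct and follows the paper's argument exactly. You combine the decomposition $(a,b)_{q-1}=\#T_\ell(a,b)+\ell\,\#I_\ell(a,b)$ with the case-split bound on $\#T_\ell(a,b)$ (according to whether $p=\ell$) and the bound $\#I_\ell(a,b)\le q^{\ell-2}$, which is precisely how the paper derives the corollary.
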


\begin{proof}
It follows immediately from Propositions \ref{prop:odd-prime-structure-general}, \ref{lem:T-bound-odd-prime-general}, and \ref{lem:I-bound-odd-prime-general}.
\end{proof}

Especially, we can obtain a concrete value of our bound when $r=3$. Thus we consider $r=3$.

\begin{lem}\label{lem:resultant-r3-general}
Let $r=3$, $q$ be a prime power $p^n$, and $0\le a,b\le q-2$. Put $A(X):=X^3-\omega^{ak}$ and $B(X):=(X+1)^3-\omega^{bk}$. Then $\Res(A,B)=(1+\omega^{ak}-\omega^{bk})^3+27\omega^{ak}\omega^{bk}$.
\end{lem}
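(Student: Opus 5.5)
The plan is to avoid expanding the $6\times 6$ Sylvester determinant directly. Instead I will use the product formula for the resultant of a monic polynomial: if $A(X)=\prod_{i=1}^{3}(X-\alpha_i)$ over $\bar{\mathbb{F}}_q$, then $\Res(A,B)=\prod_{i=1}^3 B(\alpha_i)$. This is standard for the Sylvester matrix convention fixed before Fact \ref{fac:resultant-common-root}, with leading coefficient $1$ and no sign issue since $\deg A=\deg B=3$. To be safe in characteristic $3$, where $X^3-\omega^{ak}$ has a triple root, I will argue with generic $u,v$. Concretely, I prove the identity $\det S(X^3-u,(X+1)^3-v)=(1+u-v)^3+27uv$ in $\mathbb{Z}[u,v]$, where the product formula is available over an algebraic closure of $\mathbb{Q}(u,v)$. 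Then I specialize $u=\omega^{ak}$, $v=\omega^{bk}$. This is legitimate because the Sylvester determinant is a polynomial in the coefficients with integer coefficients.

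The first step is a reduction of each factor. For a root $\alpha$ of $A$ we have $\alpha^3=u$, so
\[
B(\alpha)=\alpha^3+3\alpha^2+3\alpha+1-v=3(\alpha^2+\alpha)+c,
\]
where $c:=1+u-v$. Putting $y_i:=\alpha_i+\alpha_i^2$, the resultant equals
\[
\prod_{i=1}^3(c+3y_i)=c^3+3c^2\sum_i y_i+9c\sum_{i<j}y_iy_j+27\prod_i y_i .
\]

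The second step is to evaluate these symmetric functions from $e_1=e_2=0$ and $e_3=u$, the elementary symmetric functions of the roots of $X^3-u$:
\begin{itemize}
\item $\sum_i y_i=e_1+(e_1^2-2e_2)=0$.
\item $\prod_i y_i=\prod_i\alpha_i\cdot\prod_i(1+\alpha_i)$. Here $\prod_i\alpha_i=u$, and $\prod_i(1+\alpha_i)=-A(-1)=1+u$, so $\prod_i y_i=u(1+u)$.
\item $\sum_{i<j}y_iy_j=\sum_{i<j}\alpha_i\alpha_j+\sum_{i<j}\alpha_i\alpha_j(\alpha_i+\alpha_j)+\sum_{i<j}\alpha_i^2\alpha_j^2$. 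The three pieces are $e_2=0$, $e_1e_2-3e_3=-3u$ and $e_2^2-2e_1e_3=0$, so the sum is $-3u$.
\end{itemize}

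Substituting, $\Res(A,B)=c^3-27cu+27u(1+u)=c^3+27u(1+u-c)=c^3+27uv$, which is the claimed formula.

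The only delicate point I expect is the justification of the product formula relative to the paper's explicit Sylvester definition, including its sign convention and the characteristic $3$ case. The generic-coefficient argument in $\mathbb{Z}[u,v]$ handles both.
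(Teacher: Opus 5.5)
Your argument is correct, but it follows a different route from the paper. The paper stays with the Sylvester matrix. It writes $S(A,B)$ in block form $\begin{psmallmatrix} I_3 & -\omega^{ak}I_3\\ A_{21} & A_{22}\end{psmallmatrix}$ and subtracts the first block row from the second to get $\begin{psmallmatrix} I_3 & -\omega^{ak}I_3\\ L & M\end{psmallmatrix}$. A Schur complement then reduces this to the $3\times 3$ determinant $\det(M+\omega^{ak}L)$ of $\begin{psmallmatrix} c & 3\omega^{ak} & 3\omega^{ak}\\ 3 & c & 3\omega^{ak}\\ 3 & 3 & c\end{psmallmatrix}$, which expands to $c^3-27c\omega^{ak}+27\omega^{2ak}+27\omega^{ak}$.

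That computation works directly over $\mathbb{F}_q$ in every characteristic, including $p=3$. It uses nothing beyond the paper's own definition of $\Res$, so it needs no factorization over $\bar{\mathbb{F}}_q$ and no specialization argument. Your approach instead imports the product formula $\Res(f,g)=a_m^{n}\prod_i g(\alpha_i)$ and symmetric-function identities. In exchange it explains the shape of the answer: each factor is $B(\alpha_i)=c+3(\alpha_i+\alpha_i^2)$, and only $e_3=\omega^{ak}$ survives. Your detour through generic coefficients in $\mathbb{Z}[u,v]$ is valid but not needed. The product formula is itself a polynomial identity that holds over any field, repeated roots included, so characteristic $3$ causes no trouble.

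One correction to your justification: the absence of a sign has nothing to do with $\deg A=\deg B=3$. The paper's block definition of $S(f,g)$ is the standard Sylvester matrix, with the $n$ shifted rows of $f$ placed first. For that matrix, $\Res(f,g)=a_m^{n}\prod_i g(\alpha_i)$ holds for all degrees. It is the other expression, $\prod_j A(\beta_j)$ over the roots of $B$, that would pick up the factor $(-1)^{mn}=-1$ here. Since you took the product over the roots of $A$, your formula and final answer are right.
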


\begin{proof}
Put $c:=1+\omega^{ak}-\omega^{bk}$. By the definition of the Sylvester matrix, we have
$S(A,B)=\begin{psmallmatrix} A_{11} & A_{12}\\ A_{21} & A_{22} \end{psmallmatrix}$, where
$A_{11}=I_3$, $A_{12}=-\omega^{ak}I_3$,
$A_{21}=\begin{psmallmatrix}1&3&3\\0&1&3\\0&0&1\end{psmallmatrix}$, and
$A_{22}=\begin{psmallmatrix}1-\omega^{bk}&0&0\\3&1-\omega^{bk}&0\\3&3&1-\omega^{bk}\end{psmallmatrix}$.
Put
$L:=A_{21}-A_{11}=\begin{psmallmatrix}0&3&3\\0&0&3\\0&0&0\end{psmallmatrix}$
and
$M:=A_{22}-A_{12}=\begin{psmallmatrix}c&0&0\\3&c&0\\3&3&c\end{psmallmatrix}$.
Then $\Res(A,B)=\det\begin{psmallmatrix} I_3 & -\omega^{ak}I_3\\ L & M \end{psmallmatrix}=\det(M+\omega^{ak}L)=c^3-27c\omega^{ak}+27\omega^{2ak}+27\omega^{ak}=(1+\omega^{ak}-\omega^{bk})^3+27\omega^{ak}\omega^{bk}$.
\end{proof}

Note that
$T_{3}(a,b):=\{x\in \mathbb{F}_q\mid x^3=\omega^{ak},\ (x+1)^3=\omega^{bk}\}$,
and
$I_{3}(a,b):=\{m_x(X)\in \mathbb{F}_q[X]\mid x\in \bar{\mathbb{F}}_q,\ [\mathbb{F}_q(x):\mathbb{F}_q]=3,\ m_x(0)=-\omega^{ak},\ m_x(-1)=-\omega^{bk}\}$.

\begin{prop}\label{lem:T-layer-r3-general}
Let $r=3$, $q$ be a prime power $p^n$, and $0\le a,b\le q-2$.
Put $A(X):=X^{3}-\omega^{ak}$ and $B(X):=(X+1)^3-\omega^{bk}$.
Let $\lambda:=0$ if $p=2$, and let $\lambda:=\frac{q-1}{2}$ if $p$ is odd.
Then
\[
\#T_3(a,b)=
\begin{cases}
1, & \textup{if } p=3 \textup{ and } 1+\omega^{ak}-\omega^{bk}=0,\\
0, & \textup{if } p=3 \textup{ and } 1+\omega^{ak}-\omega^{bk}\neq0,\\
2, & \textup{if } p\neq3,\ (a,b)=(0,\lambda),\ 3\mid(q-1),\\
0, & \textup{if } p\neq3,\ (a,b)=(0,\lambda),\ 3\nmid(q-1),\\
1, & \textup{if } p\neq3,\ (a,b)\neq(0,\lambda),\ \Res(A,B)=0,\\
0, & \textup{if } p\neq3,\ (a,b)\neq(0,\lambda),\ \Res(A,B)\neq0.
\end{cases}
\]
\end{prop}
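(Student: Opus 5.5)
The plan is to separate the case $p=3$, which is already covered, from the case $p\neq3$, where I will use $\gcd(A,B)$ over $\mathbb{F}_q$. For $p=3$ the first two lines of the statement are exactly Proposition~\ref{lem:T-bound-odd-prime-general} with $\ell=3$. So assume $p\neq3$ and write $u:=\omega^{ak}$, $v:=\omega^{bk}$. By definition, $T_3(a,b)$ is the set of common roots of $A$ and $B$ lying in $\mathbb{F}_q$.

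\textbf{Step 1: reduce to the degree of $D:=\gcd(A,B)$.} Since $p\neq3$ and $u\neq0$, the derivative $A'=3X^2$ has no common root with $A$, so $A$ is separable. Hence $D\in\mathbb{F}_q[X]$ is also separable, and $\#T_3(a,b)$ equals the number of $\mathbb{F}_q$-rational roots of $D$. Moreover
\[
B(X)-A(X)=3X^2+3X+c,\qquad c:=1+u-v,
\]
is a genuine quadratic because $3\neq0$. Since $D$ divides this quadratic, $\deg D\le2$. If $\deg D=1$, then $D$ has exactly one root, and it lies in $\mathbb{F}_q$. By Fact~\ref{fac:resultant-common-root}, $\deg D\ge1$ holds if and only if $\Res(A,B)=0$.

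\textbf{Step 2: show $\deg D=2$ happens exactly when $(a,b)=(0,\lambda)$.} This is the main obstacle.

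Suppose first that $\deg D=2$. Then $A$ and $B$ have two distinct common roots $x\neq y$ in $\bar{\mathbb{F}}_q$, and these are the roots of $3X^2+3X+c$. Hence $x+y=-1$. From $x^3=y^3$ and $x\neq y$ we get $x^2+xy+y^2=0$, that is, $(x+y)^2=xy$, so $xy=1$. In addition, $x+1=-y$ gives
\[
v=(x+1)^3=-y^3=-u.
\]
Also $u^2=x^3y^3=(xy)^3=1$.
\begin{itemize}
\item For odd $p$, comparing with $xy=c/3$ gives $c=3$. With $v=-u$ this reads $1+2u=3$, so $u=1$ (since $2$ is invertible). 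Then $a=0$ and $v=-1$, which means $b=\frac{q-1}{2}=\lambda$.
\item For $p=2$, squaring is injective on $\mathbb{F}_q$, so $u^2=1$ forces $u=1$. Then $v=-u=1$, so $(a,b)=(0,0)=(0,\lambda)$.
\end{itemize}

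Conversely, suppose $(a,b)=(0,\lambda)$, so $u=1$ and $v=-1$ (this reads $v=1=-1$ when $p=2$). Then $c=3$, and the quadratic becomes $3(X^2+X+1)$. Its roots are the primitive cube roots of unity $\zeta$. Each satisfies $\zeta^3=1=u$, and since $\zeta+1=-\zeta^2$ we get $(\zeta+1)^3=-\zeta^6=-1=v$. So both roots are common roots of $A$ and $B$, and $D=X^2+X+1$. These roots lie in $\mathbb{F}_q$ if and only if $3\mid q-1$. This gives $\#T_3(a,b)=2$ when $3\mid(q-1)$ and $\#T_3(a,b)=0$ otherwise.

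\textbf{Step 3: combine.} Suppose $p\neq3$ and $(a,b)\neq(0,\lambda)$. By Step 2 we have $\deg D\le1$. By Step 1, $\#T_3(a,b)=1$ when $\Res(A,B)=0$, and $\#T_3(a,b)=0$ otherwise. This accounts for all six lines of the statement.
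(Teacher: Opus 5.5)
Your proof is correct and essentially matches the paper's: for $p=3$ you invoke the $\ell=3$ case of Proposition~\ref{lem:T-bound-odd-prime-general} (the paper repeats that argument inline), and for $p\neq3$ both arguments bound $\deg\gcd(A,B)$ through $B-A$, detect a common root with the resultant, and find $D=X^2+X+1$ at $(a,b)=(0,\lambda)$. The only variation is how you exclude $\deg D=2$ for $(a,b)\neq(0,\lambda)$. The paper observes that $D$ divides the remainder $L(X)=(v-u+2)X+(1-2u-v)$ of $3A$ modulo $B-A$, which is nonzero unless $(u,v)=(1,-1)$; you instead apply Vieta's relations to two distinct common roots, which is why you need $A$ to be separable.
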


\begin{proof}
We divide the argument into two cases: \textup{(1)}: $p=3$, and \textup{(2)}: $p\neq 3$.

\noindent\textup{(1)}: Suppose $p=3$. Then $B(X)-A(X)=1+\omega^{ak}-\omega^{bk}$. Hence a common root exists if and only if $1+\omega^{ak}-\omega^{bk}=0$. If the condition holds, then the equation $x^3=\omega^{ak}$ has exactly one solution in $\mathbb{F}_q$, since the Frobenius map $x\mapsto x^3$ is an automorphism of $\mathbb{F}_q$. Therefore $\#T_3(a,b)=1$. If $1+\omega^{ak}-\omega^{bk}\neq 0$, then there is no common root and $\#T_3(a,b)=0$.

\noindent\textup{(2)}: Suppose $p\neq 3$. Put $u:=\omega^{ak}$ and $v:=\omega^{bk}$. Then $3A(X)=(X-1)(B(X)-A(X))+L(X)$, where $L(X):=(v-u+2)X+(1-2u-v)$. Let $D(X):=\gcd(A(X),B(X))$. Since $D(X)$ divides both $A(X)$ and $B(X)$, it divides $B(X)-A(X)$. Hence $D(X)\mid L(X)$. The equality $L(X)=0$ holds if and only if $(u,v)=(1,-1)$. By the definition of $\lambda$, it is equivalent to $(a,b)=(0,\lambda)$.

\textup{(i)}: Assume that $(a,b)\neq(0,\lambda)$. Then $L(X)$ is not the zero polynomial. Since $D(X)\mid L(X)$ and $\deg L\le1$, we have $\deg D(X)\le1$. On the other hand, Lemma \ref{lem:resultant-r3-general} gives $\Res(A,B)=(1+u-v)^3+27uv$. By Fact \ref{fac:resultant-common-root}, the condition $\Res(A,B)=0$ is equivalent to the existence of a nonconstant common divisor of $A(X)$ and $B(X)$. Therefore $D(X)$ has degree $1$ if $\Res(A,B)=0$, and then there is exactly one common root in $\mathbb{F}_q$. If $\Res(A,B)\neq0$, then there is no common root.

\textup{(ii)}: Assume that $(a,b)=(0,\lambda)$. Then $A(X)=X^3-1$ and $B(X)=(X+1)^3+1$, and hence $B(X)-A(X)=3(X^2+X+1)$. Since $p\neq3$, we have $D(X)=X^2+X+1$. The roots of $X^2+X+1$ are primitive cubic roots of unity, and thus $X^2+X+1$ splits over $\mathbb{F}_q$ if and only if $3\mid(q-1)$. Therefore $\#T_3(a,b)=2$ if $3\mid(q-1)$, while $\#T_3(a,b)=0$ if $3\nmid(q-1)$.
\end{proof}

\begin{prop}\label{lem:I-layer-r3-general}
Let $r=3$, $q$ be a prime power $p^n$, and $0\le a,b\le q-2$. Define $\phi_{a,b}:\mathbb{F}_q\setminus\{0,-1\}\to\mathbb{F}_q\ (x\mapsto x-1-\frac{\omega^{ak}}{x}+\frac{\omega^{bk}}{x+1})$. Then $\#I_{3}(a,b)=q-\#\mathrm{Im}(\phi_{a,b})$. In particular, $2\le \#I_{3}(a,b)\le \lfloor\frac{2q+2}{3}\rfloor$.
\end{prop}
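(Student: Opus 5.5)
The plan is to parametrize the ambient set $P_3(a,b)$ by a single coefficient. Then I will show that reducibility of a member of $P_3(a,b)$ is detected exactly by membership of that coefficient (up to sign) in $\mathrm{Im}(\phi_{a,b})$. Write $u:=\omega^{ak}$ and $v:=\omega^{bk}$.

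\textbf{Parametrization.} As in the proof of Proposition \ref{lem:I-bound-odd-prime-general}, a monic cubic $f=X^3+c_2X^2+c_1X-u$ lies in $P_3(a,b)$ if and only if $f(-1)=-v$. This condition reads $c_1=c_2-1-u+v$. Hence $c_2\mapsto f_{c_2}$ is a bijection from $\mathbb{F}_q$ onto $P_3(a,b)$, and $\#P_3(a,b)=q$.

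\textbf{$I_3(a,b)$ is the set of irreducible members of $P_3(a,b)$.} If $f\in P_3(a,b)$ is irreducible of degree $3$, then any root $x$ has $[\mathbb{F}_q(x):\mathbb{F}_q]=3$ and $m_x=f$, so $f\in I_3(a,b)$. Conversely, every element of $I_3(a,b)$ is such an $f$. A cubic is reducible over $\mathbb{F}_q$ if and only if it has a root in $\mathbb{F}_q$. Therefore $\#I_3(a,b)=q-\#\{c_2\in\mathbb{F}_q : f_{c_2}\text{ has a root in }\mathbb{F}_q\}$.

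\textbf{Roots versus $\phi_{a,b}$.} A root $x\in\mathbb{F}_q$ of $f_{c_2}$ satisfies $x\neq 0,-1$, because $f(0)=-u\neq0$ and $f(-1)=-v\neq0$. The equation $f_{c_2}(x)=0$ is linear in $c_2$:
\[
c_2\,x(x+1)=-x^3+(1+u-v)x+u .
\]
Solving for $c_2$ and decomposing into partial fractions, I expect the identity
\[
c_2=-(x-1)+\frac{u}{x}-\frac{v}{x+1}=-\phi_{a,b}(x).
\]
So $f_{c_2}$ has a root in $\mathbb{F}_q$ if and only if $-c_2\in\mathrm{Im}(\phi_{a,b})$. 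This yields $\#I_3(a,b)=q-\#\mathrm{Im}(\phi_{a,b})$. The only real work is this partial-fraction check, which is a routine computation of the residues at $x=0$ and $x=-1$ plus the polynomial part.

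\textbf{Bounds.} The domain of $\phi_{a,b}$ has $q-2$ elements, so $\#\mathrm{Im}(\phi_{a,b})\le q-2$, which gives $\#I_3(a,b)\ge 2$. This includes $q=2$, where the domain is empty. For the upper bound, each fibre $\phi_{a,b}^{-1}(w)$ consists of roots of the single cubic $f_{-w}$, so it has at most $3$ elements. Hence $\#\mathrm{Im}(\phi_{a,b})\ge\lceil (q-2)/3\rceil$, and
\[
\#I_3(a,b)\le q-\left\lceil\frac{q-2}{3}\right\rceil=\left\lfloor\frac{2q+2}{3}\right\rfloor .
\]
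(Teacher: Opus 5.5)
Your proposal is correct and follows the paper's proof essentially step for step. You parametrize $P_3(a,b)$ by one coefficient (you use $c_2$, the paper uses $s=-c_2$), reduce irreducibility to having no root in $\mathbb{F}_q\setminus\{0,-1\}$, solve the root equation for the parameter to obtain $\phi_{a,b}$, and bound $\#\mathrm{Im}(\phi_{a,b})$ by the domain size and by fibres of size at most $3$. The partial-fraction identity you anticipated does hold, since $x(x+1)\bigl(-(x-1)+\frac{u}{x}-\frac{v}{x+1}\bigr)=-x^3+(1+u-v)x+u$.
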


\begin{proof}
Any polynomial $m_x(X)\in I_{3}(a,b)$ can be written uniquely in the form $m_x(X)=X^3+c_2X^2+c_1X+c_0$. The condition $m_x(0)=-\omega^{ak}$ gives $c_0=-\omega^{ak}$, while $m_x(-1)=-\omega^{bk}$ yields $c_2-c_1=1+\omega^{ak}-\omega^{bk}$. Writing $c_2=-s$, we obtain
$m_x(X)=X^3-sX^2-(s+1+\omega^{ak}-\omega^{bk})X-\omega^{ak}$.
Therefore $\#I_3(a,b)$ is exactly the number of parameters $s\in\mathbb{F}_q$ for which this polynomial is irreducible over $\mathbb{F}_q$.

Since $m_x(0)=-\omega^{ak}\neq 0$ and $m_x(-1)=-\omega^{bk}\neq 0$, this polynomial is reducible over $\mathbb{F}_q$ if and only if it has a root in $\mathbb{F}_q\setminus\{0,-1\}$. For $x\in \mathbb{F}_q\setminus\{0,-1\}$, substituting $x$ into the above polynomial gives
$x^3-sx^2-(s+1+\omega^{ak}-\omega^{bk})x-\omega^{ak}=0$.
Since $x^2+x=x(x+1)\neq0$, this equation is equivalent to
$s=x-1-\frac{\omega^{ak}}{x}+\frac{\omega^{bk}}{x+1}=\phi_{a,b}(x)$.
Therefore the above polynomial is reducible over $\mathbb{F}_q$ if and only if $s\in \mathrm{Im}(\phi_{a,b})$, and hence $\#I_3(a,b)=q-\#\mathrm{Im}(\phi_{a,b})$.

Since $|\mathbb{F}_{q}\backslash \{0,-1\}|=q-2$, we have $\#\mathrm{Im}(\phi_{a,b})\le q-2$, and therefore $\#I_3(a,b)\ge 2$. On the other hand, for each $s\in \mathbb{F}_q$, the fiber $\phi_{a,b}^{-1}(s)$ consists of roots of the above cubic polynomial, and hence it has a size of at most $3$. Therefore $q-2\le 3\#\mathrm{Im}(\phi_{a,b})$, and thus $\#I_3(a,b)\le q-\lceil\frac{q-2}{3}\rceil=\lfloor\frac{2q+2}{3}\rfloor$.
\end{proof}

\begin{cor}\label{cor:r3-structure-general}
Let $r=3$, $q$ be a prime power $p^n$, and for every $0\le a,b\le q-2$. Then we have
$\# T_{3}(a,b)+6\le (a,b)_{q-1}\le \# T_{3}(a,b)+3\lfloor\frac{2q+2}{3}\rfloor$.
In particular, we have $6\le (a,b)_{q-1}\le 2q+4$.
\end{cor}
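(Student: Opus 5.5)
The plan is to combine Proposition \ref{prop:odd-prime-structure-general} with $\ell=3$ and the bounds of Proposition \ref{lem:I-layer-r3-general}. Proposition \ref{prop:odd-prime-structure-general} gives the exact decomposition
\[
(a,b)_{q-1}=\#T_3(a,b)+3\#I_3(a,b).
\]
Proposition \ref{lem:I-layer-r3-general} gives $2\le \#I_3(a,b)\le \lfloor\frac{2q+2}{3}\rfloor$. Multiplying these bounds by $3$ and adding $\#T_3(a,b)$ yields
\[
\#T_3(a,b)+6\le (a,b)_{q-1}\le \#T_3(a,b)+3\left\lfloor\tfrac{2q+2}{3}\right\rfloor.
\]
This is the first assertion.

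For the numerical bounds, the lower bound $6\le (a,b)_{q-1}$ is immediate, since $\#T_3(a,b)\ge 0$. For the upper bound, I will bound $\#T_3(a,b)$ uniformly using Proposition \ref{lem:T-layer-r3-general}: in every case $\#T_3(a,b)\le 2$. I will also use $3\lfloor\frac{2q+2}{3}\rfloor\le 2q+2$. Together these give $(a,b)_{q-1}\le 2q+4$.

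The only point needing care is whether the crude bound $\#T_3\le 2$ loses anything. It does not matter for the stated inequality, since we only need $2q+4$. If sharper information were wanted, one would check the residue of $q$ mod $3$. When $3\mid q-1$, we have $3\lfloor\frac{2q+2}{3}\rfloor=2q$, so the sum is at most $2q+2$. Otherwise $3\lfloor\frac{2q+2}{3}\rfloor$ equals $2q+2$ or $2q+1$, and then $\#T_3\le 1$ because the value $2$ requires $3\mid q-1$. This would actually give $(a,b)_{q-1}\le 2q+3$, but that refinement is not needed.

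Overall there is no real obstacle: the statement is a direct assembly of Propositions \ref{prop:odd-prime-structure-general}, \ref{lem:T-layer-r3-general} and \ref{lem:I-layer-r3-general}.
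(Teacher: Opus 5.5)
Your proof is correct and follows the paper's own argument: it uses the decomposition $(a,b)_{q-1}=\#T_3(a,b)+3\#I_3(a,b)$, the bounds $2\le \#I_3(a,b)\le\lfloor\frac{2q+2}{3}\rfloor$, and then $\#T_3(a,b)\le 2$ together with $3\lfloor\frac{2q+2}{3}\rfloor\le 2q+2$. Your optional aside has one arithmetic slip: when $3\mid q-1$ one has $3\lfloor\frac{2q+2}{3}\rfloor=2q+1$, not $2q$ (it is $2q$ when $3\mid q$ and $2q+2$ when $q\equiv 2\pmod 3$), but the refined bound $(a,b)_{q-1}\le 2q+3$ still holds.
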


\begin{proof}
Proposition \ref{prop:odd-prime-structure-general} with $\ell=3$ gives $(a,b)_{q-1}=\#T_3(a,b)+3\#I_3(a,b)$. Hence the bounds follow from Propositions \ref{lem:T-layer-r3-general} and \ref{lem:I-layer-r3-general}. Moreover, Proposition \ref{lem:T-layer-r3-general} gives $\#T_3(a,b)\le 2$, and hence $(a,b)_{q-1}\le 2+3\lfloor(2q+2)/3\rfloor\le 2q+4$.
\end{proof}

\paragraph{Question.}
In our family, the only failures of the inequality $(a,b)_{q-1}\le \lceil \frac{k}{2}\rceil$ occur when $q=2$ and $r\ge 3$, where the corresponding matrix $C^{(0,0)}$ is the all-one matrix. It suggests the following problem.
\begin{itemize}[leftmargin=2em]
  \item In more general cases, when does the inequality $(a,b)_e\le \lceil \frac{k}{2}\rceil$ fail? 
  \item What is the concrete shape of the corresponding matrix $C^{(a,b)}$ in the failure case?
\end{itemize}

\section*{Acknowledgements}

This work was supported by JST SPRING, Grant Number JPMJSP2152. We thank Professor Koichi Betsumiya for his guidance and encouragement throughout this research. We also thank Wei-Liang Sun for helpful comments and valuable information on cyclotomic numbers.

\end{document}